\newlength{\temp@wc@width}
\newlength{\temp@wc@height}
\newcommand{\widecheck}[1]{%
  \setlength{\temp@wc@width}{\widthof{$#1$}}%
  \setlength{\temp@wc@height}{\heightof{$#1$}}%
  #1\hspace{-\temp@wc@width}%
  \raisebox{\temp@wc@height+2pt}[\heightof{$\widehat{#1}$}]%
     {\rotatebox[origin=c]{180}{\vbox to 0pt{\hbox{$\widehat{\hphantom{#1}}$}}}}%
}
\newtheorem{theorem}{Theorem}
\newtheorem{lemma}{Lemma}[section]
\newtheorem{proposition}[lemma]{Proposition}
\newtheorem{definition}[lemma]{Definition}
\newtheorem{remark}[lemma]{Remark}
\newtheorem{corollary}[lemma]{Corollary}
\numberwithin{equation}{section}
\newcommand{\Om}{\Omega}
\newcommand{\be}{\begin{equation}}
\newcommand{\ee}{\end{equation}}
\newcommand{\bestar}{\begin{equation*}}
\newcommand{\eestar}{\end{equation*}}
\newcommand{\R}{\mathbb R}
\newcommand{\N}{\mathbb N}
\newcommand{\Z}{\mathbb Z}
\newcommand{\C}{\mathbb C}
\newcommand{\bK}{\mathbb K}
\newcommand{\Omb}{\Omega^b}
\newcommand{\Kc}{\mathcal{K}}
\newcommand{\hv}{\hat v}
\newcommand{\pa}{\partial}
\newcommand{\na}{\nabla}
\newcommand{\dv}{\mathrm{div}\;}
\newcommand{\Supp}{\operatorname{Supp}}
\newcommand{\tu}{\tilde{u}}
\newcommand{\tp}{\tilde p}
\newcommand{\om}{\omega}
\newcommand{\la}{\lambda}
\newcommand{\ds}{\displaystyle}
\DeclareMathOperator{\logg}{ln}
\DeclareMathOperator{\supp}{Supp}
\DeclareMathOperator{\cof}{Cof}
\DeclareMathOperator{\DtoN}{DN}
\DeclareMathOperator{\Aa}{A}
\DeclareMathOperator{\Bb}{B}
\DeclareMathOperator{\Cc}{C}
\numberwithin{equation}{section}
\title{Well-posedness of the Stokes-Coriolis system in the  half-space over a rough surface}
\author{Anne-Laure Dalibard and Christophe Prange}
\begin{document}

\maketitle

%%%%%%%%%%%%%%%%%%%%%%%%%%%%%%%%%%%%%%%%%%%%%%%%%%%%%%%%%%%%%%%%%%%%%%%%%%%%%%%%%%%%%%%%%%%%%%%%%%%%%
%DEBUT TEXTE
%%%%%%%%%%%%%%%%%%%%%%%%%%%%%%%%%%%%%%%%%%%%%%%%%%%%%%%%%%%%%%%%%%%%%%%%%%%%%%%%%%%%%%%%%%%%%%%%%%%%%

\begin{abstract}
This paper is devoted to the well-posedness of the stationary $3$d Stokes-Coriolis system set in a half-space with rough bottom and Dirichlet data which does not decrease at space infinity. Our system is a linearized version of the Ekman boundary layer system. We look for a solution of infinite energy in a space of Sobolev regularity. Following an idea of G\'erard-Varet and Masmoudi, the general strategy is to reduce the problem to a bumpy channel bounded in the vertical direction thanks to a transparent boundary condition involving a Dirichlet to Neumann operator. Our analysis emphasizes some strong singularities of the Stokes-Coriolis operator at low tangential frequencies. One of the main features of our work lies in the definition of a Dirichlet to Neumann operator for the Stokes-Coriolis system with data in the Kato space $H^{1/2}_{uloc}$.
\end{abstract}

\section{Introduction}

The goal of the present paper is to prove the existence and uniqueness of solutions to the Stokes-Coriolis system
\be\label{SC}
\left\{\begin{array}{rll}
-\Delta u + e_3 \times u + \na p&=0& \text{in }\Om,\\
\dv u&=0&\text{in }\Om,\\
u|_{\Gamma}&=u_0&
\end{array}\right.
\ee
where 
$$\begin{aligned}
\Om:=\{x\in \R^3, \ x_3>\om(x_h)\},\\
\Gamma=\pa\Om=\{x\in \R^3, \ x_3=\om(x_h)\}
\end{aligned}
$$
and $\om: \R^2\to \R^2$ is a bounded function. 

When $\om$ has some structural properties, such as periodicity, existence and uniqueness of solutions are easy to prove: our aim here is to prove well-posedness when the function $\om$ is arbitrary, say $\om\in W^{1,\infty}(\R^2)$, and when the boundary data $u_0$ is not square integrable. More precisely, we wish to work with $u_0$ in a space of infinite energy of Sobolev regularity, such as Kato spaces. We refer to the end of this introduction for a definition of these uniformly locally Sobolev spaces $L^2_{uloc},\ H^s_{uloc}$.

The interest for such function spaces to study fluid systems goes back to the papers by Lieumari\'e-Rieusset \cite{LemRieu1,Lemarecent}, in which existence is proved for weak solutions of the Navier-Stokes equations in $\R^3$ with initial data in $L^2_{uloc}$. These works fall into the analysis of fluid flows  with infinite energy, which is an field of intense research. Without being exhaustive, let us quote the works of: 
\begin{itemize}
\item Cannon and Knightly \cite{CK70}, Giga, Inui and Matsui \cite{GIM99}, Solonnikov \cite{Sol03nondecay}, Bae and Jin \cite{BaeJin12} (local solutions), Giga, Matsui and Sawada \cite{GMS01} (global solutions) on the nonstationary Navier-Stokes system in the whole space or in the half-space with initial data in $L^\infty$ or in $BUC$ (bounded uniformly continuous);
\item Basson \cite{Basson06}, Maekawa and Terasawa \cite{MaeTera06} on local solutions of the nonstationary Navier-Stokes system in the whole space with initial data in $L^p_{uloc}$ spaces;
\item Giga and Miyakawa \cite{GiMi}, Taylor \cite{Taylor92} (global solutions), Kato \cite{Kato92} on local solutions to the nonstationary Navier-Stokes system, and Gala \cite{GalaQGS06} on global solutions to a quasi-geostrophic equation, with initial data in Morrey spaces;
\item Gallagher and Planchon \cite{GaPl} on the nonstationary Navier-Stokes system in $\mathbb R^2$ with initial data in the homogeneous Besov space $\dot B^{2/r-1}_{r,q}$;
\item Giga and co-authors \cite{Giga++07} on the nonstationary Ekman system in $\mathbb R^3_+$ with initial data in the Besov space $\dot B^0_{\infty, 1,\sigma}\left(\R^2;L^p(\R_+)\right)$, for $2<p<\infty$; see also \cite{Giga++06} (local solutions), \cite{Giga++08} (global solutions) on the Navier-Stokes-Coriolis system in $\mathbb R^3$ and the survey of Yoneda \cite{Yoneda_survey} for initial data spaces containing almost-periodic functions;
\item Konieczny and Yoneda \cite{KoYo11} on the stationary Navier-Stokes system in Fourier-Besov spaces.

\item G\'erard-Varet and Masmoudi \cite{DGVNMnoslip} on the 2d Stokes system in the half-plane above a rough surface, with $H^{1/2}_{uloc}$ boundary data.

\item Alazard, Burq and Zuily \cite{ABZ} on the Cauchy problem for gravity water waves with data in $H^s_{uloc}$; the authors study in particular the Dirichlet to Neumann operator associated with  the laplacian in a domain $\Om=\{(x,y)\in \mathbb R^{d+1}, \eta^*(x)<y<\eta(x)\}$, with $H^{1/2}_{uloc}$ boundary data.

\end{itemize}
Despite this huge literature on initial value problems in fluid mechanics in spaces of infinite energy, we are not aware of such work concerning stationary systems and non homogeneous boundary value problems in $\mathbb R^3_+$.  Let us emphasize that the derivation of energy estimates in  stationary and time dependent settings are rather different: indeed, in a time dependent setting,  boundedness of the solution at time $t$ follows from  boundedness of the initial data and of the associated semi-group. In a stationary setting and in a domain with a boundary, to the best of our knowledge, the only way to derive estimates without assuming any structure on the function $\om$ is based on the arguments of Ladyzhenskaya and Solonnikov \cite{LS} (see also \cite{DGVNMnoslip} for the Stokes system in a bumped half plane).

In the present case, our motivation comes from the asymptotic analysis of highly rotating fluids near a rough boundary. Indeed, consider the system
\begin{equation}\label{sysrotfluidslin}
\left\{\begin{array}{rll}
\ds -{\varepsilon}\Delta u^\varepsilon + \frac{1}{\varepsilon}e_3 \times u^\varepsilon + \na p^\varepsilon&=0&\text{in }\Om^\varepsilon,\\
\dv u^\varepsilon&=0&\text{in }\Om^\varepsilon,\\
u^\varepsilon|_{\Gamma^\varepsilon}&=0,&\\
u^\varepsilon|_{x_3=1}&=(V_h,0),&
\end{array}\right.
\end{equation}
where 
$\Om^\varepsilon:=\{ x\in \R^3,\ \varepsilon \om(x_h/\varepsilon)<x_3<1\}$ and $\Gamma^\varepsilon:=\pa\Om^\varepsilon\setminus\{x_3=1\}$. Then it is expected that $u^\varepsilon$ is the sum of a two-dimensional interior flow $(u^{int}(x_h),0)$ balancing the rotation with the pressure term and a boundary layer flow $u^{BL}(x/\varepsilon;x_h)$, located in the vicinity of the lower boundary. In this case, the equation satisfied by $u^{BL}$ is precisely \eqref{SC}, with $u_0(y_h;x_h)=-(u^{int}(x_h),0)$. Notice that $x_h$ is the macroscopic variable and is a parameter in the equation on $u^{BL}$. The fact that the Dirichlet boundary condition is constant with respect to the fast variable $y_h$ is the original motivation for study of the well-posedness \eqref{SC} in spaces of infinite energy, such as the Kato spaces $H^s_{uloc}$.

The system \eqref{sysrotfluidslin} models large-scale geophysical fluid flows in the linear r\'egime. In order to get a physical insight into the physics of rotating fluids, we refer to the book by Greenspan \cite{Greenspan} (rotating fluids in general, including an extensive study of the linear r\'egime) and to the one by Pedlosky \cite{Pedlovsky} (focus on geophysical fluids). In \cite{Ekman05}, Ekman analyses the effect of the interplay between viscous forces and the Coriolis acceleration on geophysical fluid flows.

For further remarks on the system \eqref{sysrotfluidslin}, we refer to the book \cite[section $7$]{CDGG} by Chemin, Desjardins, Gallagher and Grenier, and to \cite{CDGGEkman}, where a model with anisotropic viscosity is studied and an asymptotic expansion for $u^\varepsilon$ is obtained.

Studying \eqref{SC} with an arbitrary function $\om$ is more realistic from a physical point of view, and also allows us to bring to light some bad behaviours of the system at low horizontal frequencies, which are masked in a periodic setting.

Our main result is the following.
\begin{theorem}
Let $\om\in W^{1,\infty}(\R^2)$, and let $u_{0,h}\in H^2_{uloc}(\R^2)^2$, $u_{0,3}\in H^1_{uloc}(\R^2)$. Assume that there exists $U_h\in H^{1/2}_{uloc}(\R^2)^2$ such that
\be\label{compatibility}
u_{0,3} -\na_h \om \cdot u_{0,h}=\na_h\cdot U_h.
\ee

Then there exists a unique solution $u$ of \eqref{SC} such that
$$
\begin{aligned}
\forall a>0,\quad \sup_{l\in \Z^2} \|u\|_{H^1\left(\left((l+[0,1]^2)\times (-1,a)\right)\cap \Omega\right)}<\infty,\\
 \sup_{l\in \Z^2}\sum_{\alpha\in \N^3, |\alpha|=q}\int_1^\infty \int_{l+[0,1]^2}|\na^\alpha u|^2<\infty
\end{aligned}
$$
for some integer $q$ sufficiently large, which does not depend on $\om$ nor $u_0$ (say $q\geq 4$).

\label{thm:ex/uni}
\end{theorem}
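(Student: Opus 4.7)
The plan is to follow the strategy of G\'erard-Varet and Masmoudi from \cite{DGVNMnoslip}: up to a harmless vertical translation one may assume $\om<0$ and introduce a flat artificial interface $\Si=\{x_3=1\}$ lying above the bumpy boundary $\Gamma$. This splits $\Om$ into a bumpy bounded-in-$x_3$ channel $\Om^b:=\Om\cap\{x_3<1\}$ and the flat upper half-space $\Om^+:=\{x_3>1\}$. On $\Om^+$ one constructs a Dirichlet-to-Neumann operator $\DtoN$ for the Stokes-Coriolis system, which sends a trace $v=u|_\Si$ to the associated stress trace $(\pa_3 u_h,\pa_3 u_3-p)|_\Si$ of the decaying Stokes-Coriolis solution in $\Om^+$ with Dirichlet datum $v$. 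Problem \eqref{SC} is then equivalent to solving Stokes-Coriolis in the channel $\Om^b$ with Dirichlet data $u_0$ on $\Gamma$ and the nonlocal transparent boundary condition $(\pa_3 u_h,\pa_3 u_3-p)|_\Si=\DtoN(u|_\Si)$ on $\Si$.

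\textbf{The Dirichlet-to-Neumann operator.} The first major task, and the main technical difficulty, is to build and control $\DtoN$ on $H^{1/2}_{uloc}(\R^2)$. In $\Om^+$ one partial-Fourier-transforms in $x_h$: for each frequency $\xi\in\R^2\setminus\{0\}$ the Stokes-Coriolis system reduces to a linear ODE in $x_3$ whose characteristic roots are solutions of a biquadratic equation. Selecting the two roots with positive real part produces an explicit symbol $\mathcal{M}(\xi)$ for $\DtoN$. Unlike the pure Stokes case, where the symbol is $O(|\xi|)$, here the Coriolis coupling generates singular branches as $|\xi|\to 0$: certain components of $\mathcal{M}$ blow up like $|\xi|^{-1}$. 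This is the low-frequency singularity flagged in the abstract, and it prevents $\DtoN$ from acting as a standard pseudodifferential operator on $H^{1/2}_{uloc}$. The way to handle it is to extract the singular piece explicitly and realize it as a horizontal Riesz-type operator acting on the combination $u_{0,3}-\na_h\om\cdot u_{0,h}$; the compatibility assumption \eqref{compatibility} is precisely what is needed to write this quantity as $\na_h\cdot U_h$ with $U_h\in H^{1/2}_{uloc}$, so that the singular contribution becomes a bounded horizontal operator on the lifted data. The remaining, non-singular part of $\mathcal{M}$ is treated by expressing it as a convolution kernel with sufficient decay to act on $H^{1/2}_{uloc}$.

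\textbf{Energy estimate on the bumpy channel.} Once $\DtoN$ is under control, I would construct a divergence-free lift $v_0$ of the Dirichlet data in a neighbourhood of $\Gamma$, using again \eqref{compatibility} to guarantee solvability of the local $\dv v_0=0$ problem, and work with $w=u-v_0$. The remainder $w$ satisfies Stokes-Coriolis in $\Om^b$ with vanishing trace on $\Gamma$, an $L^2_{uloc}$ source term supported near $\Gamma$, and a transparent boundary condition on $\Si$. Following the Ladyzhenskaya-Solonnikov argument as adapted in \cite{DGVNMnoslip}, one tests the equation against $w$ truncated by a horizontal cut-off $\chi_k$ supported in a disc of radius $k\in\N$. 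The crucial points are: the Coriolis term $e_3\times w$ is skew-symmetric and drops out of the energy identity; the transparent boundary term on $\Si$ contributes a non-negative quadratic form (from the non-singular part of $\DtoN$) plus a commutator with the cut-off and the singular Riesz piece, which is absorbed thanks to \eqref{compatibility} and standard Calder\'on-type estimates. The resulting Saint-Venant-like inequality on the energy $E_k$ of $w$ in the truncated channel of radius $k$ takes the form of an induction on $k$ that forces $\sup_k E_k/k^2<\infty$, hence the $H^1_{uloc}$ bound in the lower strip $(-1,a)\cap\Om$.

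\textbf{Higher regularity above $\Si$ and uniqueness.} Once a Kato solution in $\Om^b$ is obtained, it is extended to $\Om^+$ via the Fourier representation used to define $\DtoN$. The exponential decay in $x_3|\xi|$ of this representation away from $\xi=0$ means that every horizontal or vertical derivative is absorbed at the cost of negative powers of $|\xi|$; taking $q\ge 4$ derivatives is more than enough to kill the low-frequency singularity of order $|\xi|^{-1}$ and to land in $L^2$ of the upper half-space (in $x_3>1$), uniformly over horizontal unit cells, which yields the second bound in the theorem. Uniqueness follows by applying the same truncated energy argument to the difference of two solutions, whose boundary data are zero, so that the Saint-Venant inequality forces $E_k\equiv 0$. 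The main obstacle throughout is unquestionably the second step: defining and manipulating $\DtoN$ on $H^{1/2}_{uloc}$ despite the low-frequency singularity of the Stokes-Coriolis symbol, which is what justifies the unusual compatibility condition \eqref{compatibility}.
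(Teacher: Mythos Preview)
Your overall strategy matches the paper's closely: reduce to a bounded channel via a Dirichlet-to-Neumann operator on a flat interface, isolate the low-frequency singularity of the Stokes-Coriolis symbol and absorb it through the compatibility condition \eqref{compatibility}, then run a Ladyzhenskaya--Solonnikov/Saint-Venant argument. That is exactly the architecture of the proof.

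However, two technical points in your sketch are inaccurate and would derail the actual computation. First, the characteristic equation for the Stokes-Coriolis ODE in $x_3$ is not biquadratic with two decaying roots: eliminating the pressure gives $(|\xi|^2-\lambda^2)^3+\lambda^2=0$, a cubic in $\lambda^2$, so there are \emph{three} roots $\lambda_1,\lambda_2,\lambda_3$ with positive real part (one real, two complex conjugate). The representation formula, the matrices $L_k$, and the entire symbol analysis of $\DtoN$ rest on this three-term structure. Second, the decay in the upper half-space is not uniformly $\exp(-|\xi|x_3)$: for small $|\xi|$ one has $\lambda_1(\xi)\sim|\xi|^3$, so the slowest-decaying mode behaves like $\exp(-|\xi|^3 x_3)$. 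This is the real reason one cannot obtain $\nabla u\in L^2$ above $\Sigma$ and must settle for $\nabla^q u$ with $q$ large; each vertical derivative only gains a factor $|\xi|^3$ at low frequencies, and several are needed to compensate the singular $|\xi|^{-1}$ in the symbol together with integrability in $x_3$.

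A smaller point: in the channel energy estimate the paper does not test against $\chi_k w$ (which is not divergence-free and would force you to estimate pressure commutators) but against the explicitly divergence-free modification $\bigl(\chi_k w_h,\ -\nabla_h\!\cdot(\chi_k\!\int_{\omega}^{x_3}\!w_h)\bigr)$. The resulting Saint-Venant inequality is also more delicate than a simple recursion---because in 3D the annular region $\Sigma_{k+1}\setminus\Sigma_k$ has unbounded area, the nonlocal $\DtoN$ contribution produces an extra term of the form $\frac{k^4}{m^5}\sup_{j\ge k+m}\frac{E_{j+m}-E_j}{j}$, and closing the induction requires a two-parameter $(k,m)$ argument rather than the one-parameter version you describe.
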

\begin{remark}
\begin{itemize}
\item Assumption \eqref{compatibility} is a compatibility condition, which stems from singularities at low horizontal frequencies in the system. When the bottom is flat, it merely becomes $u_{0,3}=\na_h\cdot U_h$. Notice that this condition only bears on the normal component of the velocity at the boundary: in particular, if $u_0\cdot n|_{\Gamma}=0$, then \eqref{compatibility} is satisfied. We also stress that \eqref{compatibility} is satisfied in the framework of highly rotating fluids near a rough boundary, since in this case $u_{0,3}=0$ and $u_{0,h}$ is constant with respect to the microscopic variable.

\item The singularities at low horizontal frequencies also account for the possible lack of integrability of the gradient far from the rough boundary: we were not able to prove that
$$
 \sup_{l\in \Z^2}\int_1^\infty \int_{l+[0,1]^2}|\na u|^2<\infty
$$
although this estimate is true for the Stokes system. In fact, looking closely at our proof, it seems that non-trivial cancellations should occur for such a result to hold in the Stokes-Coriolis case.

\item Concerning the regularity assumptions on $\om$ and $u_0$, it is classical to assume Lipschitz regularity on the boundary. The regularity required on $u_0$, however, may not be optimal, and stems in the present context from an explicit lifting of the boundary condition. It is possible that the regularity could be lowered if a different type of lifting were used, in the spirit of Proposition 4.3 in \cite{ABZ}. Let us stress as well that if $\om$ is constant, then $H^{1/2}_{uloc}$ regularity is enough (cf. Corollary \ref{cor:ex/uni-SC-uloc}).

\item The same tools can be used to prove a similar result for the Stokes system in three dimensions (we recall that the paper \cite{DGVNMnoslip} is concerned with the Stokes system in two dimensions). In fact, the treatment of the Stokes system is  easier, because the associated kernel is homogeneous and has no singularity at low frequencies. The results proved in Section \ref{sec:prelim} can be obtained thanks to the Green function associated with the Stokes system in three dimensions (see \cite{GaldiI}). On the other hand, the arguments of sections \ref{sec:existence} and \ref{sec:uniqueness} of the present paper can be transposed as such to the Stokes system in 3d. The main novelties of these sections, which rely on careful energy estimates, are concerned with the higher dimensional space rather than with the presence of the rotation term (except for Lemma \ref{lemnoyauordre1}).

\end{itemize}

\end{remark}

The statement of Theorem \ref{thm:ex/uni} is very close to one of the main results of the paper \cite{DGVNMnoslip} by G\'erard-Varet and Masmoudi, namely the well-posedness of the Stokes system in a bumped half-plane with boundary data in $H^{1/2}_{uloc}(\R)$. Of course, it shares the main difficulties of \cite{DGVNMnoslip}: spaces of functions of infinite energy, lack of a Poincar\'e inequality, irrelevancy of scalar tools (Harnack inequality, maximum principle) which do not apply to systems. But two additional problems are encountered when studying \eqref{SC}:
\begin{enumerate}
\item First, \eqref{SC} is set in three dimensions, whereas the study of \cite{DGVNMnoslip} took place in 2d. This complicates the derivation of energy estimates. Indeed, the latter are based on the truncation method by Ladyzhenskaya and Solonnikov \cite{LS}, which consists more or less in multiplying \eqref{SC} by $\chi_k u$, where $\chi_k\in \mathcal C^\infty_0(\R^{d-1})$ is a cut-off function in the horizontal variables such that $\Supp \chi_k\subset B_{k+1}$ and $\chi_k\equiv 1$ on $B_k$, for $k\in \N$. If $d=2$, the size of the support of $\na \chi_k$ is bounded, while it is unbounded when $d=3$. This has a direct impact on the treatment of some commutator terms.

\item Somewhat more importantly, the kernel associated with the Stokes-Coriolis operator has a more complicated expression than the one associated with the Stokes operator (see \cite[Chapter IV]{GaldiI} for the computation of the Green function associated to the Stokes system in the half-space). In the case of the Stokes-Coriolis operator, the kernel is not homogeneous, which prompts us to distinguish between high and low horizontal frequencies throughout the paper. Moreover, it
exhibits strong singularities at low horizontal frequencies, which have repercussions on the whole proof and account for assumption \eqref{compatibility}.

\end{enumerate}

The proof of Theorem \ref{thm:ex/uni} follows the same general scheme as in \cite{DGVNMnoslip} (this scheme has also been successfully applied in \cite{DGVALDnavier} in the case of a Navier slip boundary condition on the rough bottom): we first perform a thorough analysis of the Stokes-Coriolis system in $\R^3_+$, and we define the associated Dirichlet to Neumann operator for boundary data in $H^{1/2}_{uloc}$.  In particular, we derive a representation formula for solutions of the Stokes-Coriolis system in $\R^3_+$, based on a decomposition of the kernel which distinguishes high and low frequencies, and singular/regular terms. We also prove a similar representation formula for the Dirichlet to Neumann operator.
Then, we derive an equivalent system to \eqref{SC}, set in a domain which is bounded in $x_3$ and in which a transparent boundary condition is prescribed on the upper boundary. These two preliminary steps are performed in Section \ref{sec:prelim}. We then work with the equivalent system, for which we derive energy estimates in $H^1_{uloc}$; this allows us to prove existence in Section \ref{sec:existence}. Eventually, we prove uniqueness in Section \ref{sec:uniqueness}. An Appendix gathers several technical lemmas used throughout the paper.

\subsection*{Notations}
We will be working with spaces of uniformly locally integrable functions, called Kato spaces, whose definition we now recall (see \cite{kato}).
Let $\vartheta\in  \mathcal C^\infty_0(\R^d)$ such that $\supp \vartheta \subset [-1,1]^d$, $\vartheta \equiv 1$ on $[-1/4,1/4]^d$, and
\be\label{hyp:chi}
\sum_{k\in \Z^d}\tau_k\vartheta(x)=1\quad \forall x\in \R^d,
\ee
where $\tau_k$ is the translation operator defined by $\tau_k f(x)=f(x-k)$.

Then, for $s\geq 0$, $p\in [1,\infty)$
$$
\begin{aligned}
L^p_{uloc}(\R^d):=\{u\in L^p_{loc}(\R^d),\ \sup_{k\in \Z^d}\|(\tau_k \vartheta) u\|_{L^p(\R^d)}<\infty\},\\
H^s_{uloc}(\R^d):=\{u\in H^s_{loc}(\R^d),\ \sup_{k\in \Z^d}\|(\tau_k \vartheta) u\|_{H^s(\R^d)}<\infty\}.
\end{aligned}
$$
The space $H^s_{uloc}$ is independent of the choice of the function $\vartheta$ (see Lemma 3.1 in \cite{ABZ}).

We will also work in the domain $\Omb:=\{x\in \R^3,\ \om(x_h)<x_3<0\}$, assuming that $\om$ takes values in $(-1,0)$. With a slight abuse of notation, we will write
$$
\begin{aligned}
\|u\|_{L^p_{uloc}(\Omb)}:=\sup_{k\in \Z^2}\|(\tau_k \vartheta)u\|_{L^p(\Omb)},\\
\|u\|_{H^s_{uloc}(\Omb)}:=\sup_{k\in \Z^2}\|(\tau_k \vartheta) u\|_{H^s( \Omb)},
\end{aligned}
$$
where the function $\vartheta$ belongs to $\mathcal C^\infty_0(\R^2)$ and satisfies \eqref{hyp:chi}, $\supp \vartheta \subset [-1,1]^2$, $\vartheta \equiv 1$ on $[-1/4,1/4]^2$, 
and $H^s_{uloc}(\Omb)=\{u\in H^s_{loc}(\Omb),\ \|u\|_{H^s_{uloc}(\Omb)}<\infty\},\ L^p_{uloc}(\Omb)=\{u\in L^p_{loc}(\Omb),\ \|u\|_{L^p_{uloc}(\Omb)}<\infty\}$.

Throughout the proof, we will often use the notation $|\na^q u |$, where $q\in \N$, for the quantity
$$
\sum_{\alpha\in \N^d, |\alpha|=q}|\na^\alpha u|,
$$
where $d=2$ or $3$, depending on the context.
\section{Presentation of a reduced system and main tools}
\label{sec:prelim}
Following an idea of David G\'erard-Varet and Nader Masmoudi \cite{DGVNMnoslip}, the first step is to transform \eqref{SC} so as to work in a domain bounded in the vertical direction (rather than a half-space). This allows us eventually to use Poincar\'e inequalities, which are paramount in the proof.
To that end, we introduce an artificial flat boundary above the rough surface $\Gamma$, and we replace the Stokes-Coriolis system in the half-space above the artificial boundary by a transparent boundary condition, expressed in terms of a Dirichlet to Neumann operator.

In the rest of the article, without loss of generality, we assume that $\sup \om=:\alpha<0$ and $\inf \om \geq -1$, and we place the artificial boundary  at $x_3=0$. We set
$$\begin{aligned}
\Omb&:=\{x\in \R^3,\ \om(x_h)<x_3<0\},\\
\Sigma&:=\{x_3=0\}.
\end{aligned}
$$

The Stokes-Coriolis system differs in several aspects from the Stokes system; in the present paper, the most crucial differences are  the lack of an explicit Green function, and the bad behaviour of the system at low horizontal frequencies.
The main steps of the proof are as follows:
\begin{enumerate}
\item Prove existence and uniqueness of a solution of the Stokes-Coriolis system in a half-space with a boundary data in $H^{1/2}(\R^2)$;
\item Extend this well-posedness result to boundary data in $H^{1/2}_{uloc}(\R^2)$;
\item Define the Dirichlet to Neumann operator for functions in $H^{1/2}(\R^2)$, and extend it to functions in $H^{1/2}_{uloc}(\R^2)$;
\item Define an equivalent problem in $\Omb$, with a transparent boundary condition at $\Sigma$, and prove the equivalence between the problem in $\Omb$ and the one in $\Om$;
\item Prove existence and uniqueness of solutions of the equivalent problem.
\end{enumerate}
Items 1-4 will be proved in the current section, and item 5 in sections \ref{sec:existence} and \ref{sec:uniqueness}.

\subsection{The Stokes-Coriolis system in a half-space}
\label{secesthalfspace}

The first step is to study the properties of the Stokes-Coriolis system in $\R^3_+$, namely
\be
\label{SC-R3+}
\left\{\begin{array}{rll}
-\Delta u + e_3 \times u + \na p&=0& \text{in }\R^3_+,\\
\dv u&=0& \text{in }\R^3_+,\\
u|_{x_3=0}&=v_0.&
\end{array}\right.
\ee
In order to prove the result of Theorem \ref{thm:ex/uni}, we have to prove the existence and uniqueness of a solution $u$ of the Stokes-Coriolis system in $H^1_{loc}(\R^3_+)$ such that for some $q\in \N$ sufficiently large,
$$
\sup_{l\in \Z^2}\int_{l+(0,1)^2}\int_1^\infty |\na^q u|^2<\infty
$$
However, the Green function for the Stokes-Coriolis is far from being explicit, and its Fourier transform, for instance, is much less well-behaved than the one of the Stokes system (which is merely the Poisson kernel). Therefore such a result is not so easy to prove. In particular, because of the singularities of the Fourier transform of the Green function at low frequencies, we are not able to prove that
$$
\sup_{l\in \Z^2}\int_{l+(0,1)^2}\int_1^\infty |\na u|^2<\infty.
$$

$\bullet$ We start by solving the system when $v_0\in H^{1/2}(\R^2)$. We have the following result:

\begin{proposition}
 Let $v_0\in H^{1/2}(\R^2)^3$ such that
\be\label{hyp:v03}
 \int_{\R^2}\frac{1}{|\xi|} |\hat v_{0,3}(\xi)|^2\:d\xi<\infty.
\ee
  Then the system \eqref{SC-R3+} admits a unique solution $u\in H^1_{loc}(\R^3_+)$ such that
$$
\int_{\R^3_+} |\na u|^2<\infty.
$$
\label{prop:ex/uni-SC-H12}
\end{proposition}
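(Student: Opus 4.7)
My approach is to perform a partial Fourier transform in the tangential variable $x_h$, reducing \eqref{SC-R3+} to a family of ODE problems on $\R_+$ indexed by $\xi \in \R^2$; set $\lambda := |\xi|$. Eliminating the pressure by taking the curl and the divergence of the momentum equation, a direct computation yields the sixth-order scalar equation
\begin{equation*}
(\pa_3^2 - \lambda^2)^3 \hv_3 + \pa_3^2 \hv_3 = 0,
\end{equation*}
whose characteristic polynomial $(m^2-\lambda^2)^3 + m^2$ reduces, via the substitution $\nu = m^2 - \lambda^2$, to the cubic $\nu^3 + \nu + \lambda^2 = 0$. This cubic has one real and two complex conjugate roots, giving six values of $m$, three with negative real part. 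Keeping only the three decaying modes, write
\begin{equation*}
\hv_3(\xi,x_3) = \sum_{j=1}^{3} A_j(\xi)\,e^{m_j(\xi)\,x_3},
\end{equation*}
reconstruct $\hv_h$ and $\hat p$ algebraically from the divergence-free condition and the horizontal momentum equations, and determine the three unknowns $A_j(\xi)$ by inverting the $3\times 3$ linear system coming from $\hat u|_{x_3=0} = \hat v_0(\xi)$. Non-vanishing of its determinant for $\xi \neq 0$ has to be checked.

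The crucial input is the asymptotic analysis of the roots $m_j$. For $\lambda \to \infty$, $(m^2-\lambda^2)^3 \sim -m^2$ gives all three decaying roots clustered near $-\lambda$, recovering the Stokes regime; for $\lambda \to 0$, the real root of the cubic satisfies $\nu \sim -\lambda^2$, so that $\mu := \nu + \lambda^2 \sim \lambda^6$ and one of the decaying modes has the very slow decay rate $m_1 \sim -\lambda^3$, while the two other decaying modes (coming from $\nu \sim \pm i$) have decay rates bounded below by a positive constant. The $H^1$ bound is then verified by Plancherel:
\begin{equation*}
\int_{\R^3_+}|\na u|^2 = \int_{\R^2}\int_0^\infty \bigl(\lambda^2 |\hat u|^2 + |\pa_3 \hat u|^2\bigr)\,\ud x_3\,\ud\xi,
\end{equation*}
computing the $x_3$-integral mode by mode via $\int_0^\infty |e^{m x_3}|^2 \ud x_3 = (2|\mathrm{Re}\, m|)^{-1}$. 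High frequencies are handled by the standard $H^{1/2}$ trace estimate. At low frequencies, the slow mode $e^{m_1 x_3}$ contributes a factor of order $\lambda^2/\lambda^3 = 1/\lambda$ in the integrand, so the contribution of $\hv_3$ is of order $\int |A_1(\xi)|^2/\lambda\,\ud\xi$; tracking $A_1$ through the $3\times 3$ system shows that its leading low-$\xi$ behaviour is governed by $\hat v_{0,3}$, and hypothesis \eqref{hyp:v03} is exactly what guarantees finiteness. The horizontal components, carrying an extra factor $|m_1|/\lambda \sim \lambda^2$, yield integrable contributions without any additional compatibility assumption.

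Uniqueness is easy: for any $u$ with zero trace on $\{x_3=0\}$ and $\na u \in L^2(\R^3_+)$, Hardy's inequality gives $u/x_3 \in L^2$, so one may test the equation against $\chi_R\, u$ with a horizontal cut-off $\chi_R$ and pass to the limit $R\to\infty$; the Coriolis contribution drops out pointwise since $(e_3\times u)\cdot u \equiv 0$, leaving $\int |\na u|^2 = 0$ and hence $u=0$. The main obstacle I expect is the sharp low-frequency expansion of the roots $m_j(\xi)$ and of the matrix inverting the boundary data, precise enough to show that \eqref{hyp:v03} is the only low-frequency condition needed and that no hidden integrability constraint arises from the horizontal components of $v_0$.
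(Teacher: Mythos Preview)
Your existence construction is the same as the paper's: horizontal Fourier transform, the sixth-order ODE for $\hat u_3$, the cubic $\nu^3+\nu+|\xi|^2=0$ for $\nu=m^2-|\xi|^2$, and the identification of the slow root $m_1\sim -|\xi|^3$ as the source of the low-frequency condition \eqref{hyp:v03}. One point you underestimate: at high frequencies the three decaying exponents are $|\xi|-\tfrac12\alpha_k|\xi|^{-1/3}+O(|\xi|^{-5/3})$ with $\alpha_k$ the cube roots of unity, and the amplitudes $A_k$ coming from the boundary system are individually of order $|\xi|^{4/3}|\hat v_0|$, so a triangle-inequality bound on $\hat u_3=\sum_kA_ke^{-\lambda_k x_3}$ overshoots the target by a factor $|\xi|^{8/3}$. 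The paper computes $\int_0^\infty|\hat u_3|^2\,\ud x_3=\sum_{k,l}A_k\bar A_l(\lambda_k+\bar\lambda_l)^{-1}$ exactly and passes to the basis $B_1=\sum_k A_k$, $B_2=\sum_k\bar\alpha_k A_k$, $B_3=\sum_k\alpha_k A_k$ to expose the cancellation between the three nearly-coincident exponentials; your ``standard $H^{1/2}$ trace estimate'' hides this.

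Your uniqueness argument has a genuine gap. Testing against $\chi_R u$ produces a pressure term $\int_{\R^3_+} p\,\nabla\chi_R\cdot u$ because $\chi_R u$ is not divergence-free, and from $\nabla u\in L^2(\R^3_+)$ alone you have no control on $p$; nor is $u$ in $L^2$ on the unbounded strip $\{R<|x_h|<2R\}\times(0,\infty)$, so even the gradient commutator is delicate. The Coriolis identity $(e_3\times u)\cdot u=0$ does not help here. The paper avoids the pressure entirely: since $u\in L^2(\R^2\times(0,a))$ for every $a$ (Poincar\'e from the zero trace), one can take the horizontal Fourier transform, land on the same sixth-order ODE, and observe that the only decaying solution with zero initial data is zero --- by invertibility of the very $3\times3$ matrix already computed for existence.
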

\begin{remark}
The condition \eqref{hyp:v03} stems from a singularity at low frequencies of the Stokes-Coriolis system, which we will encounter several times in the proof. Notice that \eqref{hyp:v03} is satisfied in particular when $v_{0,3}=\na_h\cdot V_h$ for some $V_h\in H^{1/2}(\R^2)^2$, which is sufficient for further purposes.
\end{remark}

\begin{proof}
\noindent $\bullet$ \textit{Uniqueness}.
Consider a solution whose gradient is in $L^2(\R^3_+)$ and with zero boundary data on $x_3=0$. Then, 
using the Poincar\'e inequality, we infer that
$$
\int_0^a\int_{\R^2}|u |^2 \leq C_a \int_0^a\int_{\R^2}|\na u |^2 <\infty,
$$
and therefore we can take the Fourier transform of $u$ in the horizontal variables. Denoting by $\xi\in\R^2$ the Fourier variable associated with $x_h$, we get
\be\label{eq:SC-Fourier}
\left\{\begin{array}{rl}
(|\xi|^2-\pa_3^2) \hat u_h + \hat u_h^\bot + i \xi \hat p&=0,\\
(|\xi|^2-\pa_3^2) \hat u_3 + \pa_3 \hat p&=0,\\
i\xi\cdot \hat u_h + \pa_3 \hat u_3&=0,
\end{array}\right.
\ee
and
$$
\hat u|_{x_3=0}=0.
$$
Eliminating the pressure, we obtain
$$
(|\xi|^2-\pa_3^2)^2 \hat u_3 -i \pa_3 \xi^\bot \cdot \hat u_h=0.
$$
Taking the scalar product of the first equation in \eqref{eq:SC-Fourier} with $(\xi^\bot, 0)$, and using the divergence-free condition, we are led to
\be\label{eq:u3-Fourier}
(|\xi|^2-\pa_3^2)^3 \hat u_3 - \pa_3^2 \hat u_3 =0.
\ee
Notice that the solutions of this equation have a slightly different nature when $\xi\neq 0$ or when $\xi=0$ (if $\xi=0$, the associated characteristic polynomial has a multiple root at zero). Therefore, as in \cite{DGVNMnoslip} we introduce a function $\varphi=\varphi(\xi)\in \mathcal C^\infty_0(\R^2)$ such that the support of $\varphi $ does not contain zero. Then $\varphi \hat u_3$ satisfies the same equation as $\hat u_3$, and vanishes in a neighbourhood of $\xi=0$.

For $\xi\neq 0$, the solutions of \eqref{eq:u3-Fourier} are linear combinations of $\exp(-\lambda_k  x_3)$ (with coefficients depending on $\xi$), where $(\lambda_k)_{1\leq k \leq 6}$ are the complex valued solutions of the equation
\be\label{eq:lambda}
(\la^2-|\xi|^2)^3  + \la^2 =0.
\ee
Notice that none of the roots of this equation is purely imaginary, and that if $\lambda$ is a solution of \eqref{eq:lambda}, so are $-\lambda$, $\bar \lambda$ and $-\bar \lambda$. Additionally \eqref{eq:lambda} has exactly one real valued positive solution. Therefore, without loss of generality we assume that $\lambda_1$, $\lambda_2$, $\lambda_3$ have strictly positive real part, while $\lambda_4, \lambda_5,\lambda_6$ have strictly negative real part, and $\lambda_1\in \R$, $\bar \lambda_2=\lambda_3$, with $\Im(\lambda_2)>0$, $\Im(\lambda_3)<0$.

On the other hand, the integrability condition on the gradient becomes
$$
\int_{\R^3_+}(|\xi|^2 |\hat u(\xi,x_3)|^2 + |\pa_3\hat u(\xi,x_3)|^2 )d\xi\: dx_3<\infty.
$$
We infer immediately that $\varphi \hat u_3$ is a linear combination of $\exp(-\lambda_k  x_3)$ for $1\leq k\leq 3$: there exist $A_k:\R^2\to \C^3$ for $k=1,2,3$ such that
$$
\varphi(\xi)\hat u_3(\xi, x_3)=\sum_{k=1}^3 A_k(\xi) \exp(-\lambda_k(\xi) x_3).
$$
Going back to \eqref{eq:SC-Fourier}, we also infer that
\be\label{eq:u_h-Fourier}
\begin{aligned}
\varphi(\xi)\xi \cdot \hat u_h(\xi, x_3)&=-i\sum_{k=1}^3\lambda_k(\xi) A_k(\xi) \exp(-\lambda_k(\xi) x_3),\\
\varphi(\xi)\xi^\bot \cdot \hat u_h(\xi, x_3)&=i\sum_{k=1}^3 \frac{(|\xi|^2-\lambda_k^2)^2}{\lambda_k}A_k(\xi) \exp(-\lambda_k(\xi) x_3).
\end{aligned}
\ee
Notice that by \eqref{eq:lambda}, 
$$
\frac{(|\xi|^2-\lambda_k^2)^2}{\lambda_k}=\frac{\lambda_k}{|\xi|^2-\lambda_k^2}\quad \text{for } k=1,2,3.
$$

Thus the boundary condition $\hat u|_{x_3=0}=0$ becomes
$$
M(\xi)\begin{pmatrix}
A_1(\xi)\\ A_2(\xi)\\ A_3(\xi)
\end{pmatrix}
=0,
$$
where
$$
M:=
\begin{pmatrix}
1&1&1\\
\lambda_1 & \lambda_2 &\lambda_3\\
\ds \frac{(|\xi|^2-\lambda_1^2)^2}{\lambda_1} &  \ds\frac{(|\xi|^2-\lambda_2^2)^2}{\lambda_2} &  \ds\frac{(|\xi|^2-\lambda_3^2)^2}{\lambda_3}
\end{pmatrix}.
$$
We have the following lemma:
\begin{lemma}\label{lem:det-M}
$$\det M= (\lambda_1-\lambda_2)(\lambda_2-\lambda_3)(\lambda_3-\lambda_1)(|\xi |+ \lambda_1 + \lambda_2 + \lambda_3).$$
\end{lemma}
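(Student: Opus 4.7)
The plan is to reduce $\det M$ to a combination of (almost-)Vandermonde determinants by row operations, evaluate each piece, and close with a Vieta relation derived from \eqref{eq:lambda}.

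First I would rewrite each entry of the third row in the expanded form
\[
\frac{(|\xi|^2-\lambda_k^2)^2}{\lambda_k}=\lambda_k^3-2|\xi|^2\lambda_k+\frac{|\xi|^4}{\lambda_k}.
\]
The middle term $-2|\xi|^2\lambda_k$ is a scalar multiple of the second row, so adding $2|\xi|^2 R_2$ to $R_3$ leaves $\det M$ unchanged and yields
\[
\det M=\det\begin{pmatrix}1&1&1\\ \lambda_1&\lambda_2&\lambda_3\\ \lambda_1^3&\lambda_2^3&\lambda_3^3\end{pmatrix}+|\xi|^4\det\begin{pmatrix}1&1&1\\ \lambda_1&\lambda_2&\lambda_3\\ 1/\lambda_1&1/\lambda_2&1/\lambda_3\end{pmatrix}=:D_1+|\xi|^4 D_2.
\]

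Next I would evaluate the two determinants separately. For $D_1$, this is the standard generalized Vandermonde with exponents $(0,1,3)$; a direct expansion (or the Jacobi/Schur formula) gives
\[
D_1=(\lambda_1-\lambda_2)(\lambda_2-\lambda_3)(\lambda_3-\lambda_1)\,(\lambda_1+\lambda_2+\lambda_3).
\]
For $D_2$, after clearing the $1/\lambda_k$ factors one finds by direct computation
\[
D_2=\frac{(\lambda_1-\lambda_2)(\lambda_2-\lambda_3)(\lambda_3-\lambda_1)}{\lambda_1\lambda_2\lambda_3}.
\]
Each of these is a short algebraic verification that I would carry out by cofactor expansion along the top row; a quick numerical check (e.g.\ $\lambda_k=1,2,3$) confirms both identities.

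Combining,
\[
\det M=(\lambda_1-\lambda_2)(\lambda_2-\lambda_3)(\lambda_3-\lambda_1)\left(\lambda_1+\lambda_2+\lambda_3+\frac{|\xi|^4}{\lambda_1\lambda_2\lambda_3}\right),
\]
so the lemma reduces to showing $\lambda_1\lambda_2\lambda_3=|\xi|^3$. This is the only non-routine step, and it comes from Vieta. The characteristic polynomial \eqref{eq:lambda} expands to the degree-$6$ monic polynomial $\lambda^6-3|\xi|^2\lambda^4+(3|\xi|^4+1)\lambda^2-|\xi|^6$, whose six roots are $\pm\lambda_1,\pm\lambda_2,\pm\lambda_3$. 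Its product-of-roots equals $-|\xi|^6$, while on the other hand this product equals $-(\lambda_1\lambda_2\lambda_3)^2$, so $(\lambda_1\lambda_2\lambda_3)^2=|\xi|^6$. Since $\lambda_1$ is real positive and $\lambda_3=\bar\lambda_2$ gives $\lambda_2\lambda_3=|\lambda_2|^2>0$, we select the positive sign $\lambda_1\lambda_2\lambda_3=|\xi|^3$. Substituting, $|\xi|^4/(\lambda_1\lambda_2\lambda_3)=|\xi|$, and the claimed formula follows. The main conceptual point — and the only step I expect to require some thought rather than bookkeeping — is recognizing that the factor $|\xi|$ in the statement is precisely Vieta's product for the elementary symmetric function $e_3$ of the right-half-plane roots of \eqref{eq:lambda}.
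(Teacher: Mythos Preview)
Your proof is correct and takes a cleaner route than the paper's. The paper expands $\det M$ directly by cofactors along the first row, then simplifies each $2\times 2$ minor of the form $\frac{\lambda_i^2(|\xi|^2-\lambda_j^2)^2-\lambda_j^2(|\xi|^2-\lambda_i^2)^2}{\lambda_i\lambda_j}$ via a difference-of-squares factorization, using the relation $\lambda_1\lambda_2\lambda_3=|\xi|^3$ (listed among the properties \eqref{rellambda}) to turn $|\xi|^4/(\lambda_i\lambda_j)$ into $|\xi|\lambda_k$; the final factorization is then by inspection. Your approach---expanding the third-row entries as $\lambda_k^3-2|\xi|^2\lambda_k+|\xi|^4/\lambda_k$, removing the middle piece by a row operation, and splitting by linearity into two Vandermonde-type determinants with known factorizations---is more structural: it makes transparent why the answer is (Vandermonde)$\times$(symmetric polynomial), and it isolates the single non-algebraic input, the Vieta relation $\lambda_1\lambda_2\lambda_3=|\xi|^3$, which you derive rather than quote. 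The paper's computation is slightly shorter on the page but more ad hoc; your decomposition would adapt more readily to variants in which the third row carries a different polynomial expression in the $\lambda_k$.
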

Since the proof of the result is a mere calculation, we have postponed it to Appendix \ref{appendixexp}. It is then clear that $M$ is invertible for all $\xi\neq 0$: indeed it is easily checked that all the roots of \eqref{eq:lambda} are simple, and we recall that $\lambda_1, \lambda_2, \lambda_3$ have positive real part.

We conclude that $A_1=A_2=A_3=0$, and thus $\varphi(\xi) \hat u(\xi,x_3)=0$ for all $\varphi\in \mathcal C^\infty_0(\R^2)$ supported far from $\xi=0$. Since $\hat u \in L^2(\R^2\times(0,a))^3$ for all $a>0$, we infer that $\hat u=0$.

\vskip2mm

\noindent $\bullet$ \textit{Existence}.
Now, given $v_0\in H^{1/2}(\R^2)$, we define $u$ through its Fourier transform in the horizontal variable. It is enough to define the Fourier transform for $\xi\neq 0$, since it is square integrable in $\xi$. Following the calculations above, we define coefficients
$A_1,A_2,A_3$ by the equation
\be\label{def:A}
M(\xi)\begin{pmatrix}
A_1(\xi)\\ A_2(\xi)\\ A_3(\xi)
\end{pmatrix}
= \begin{pmatrix}
\hat v_{0,3}\\ i\xi \cdot \hat v_{0,h}\\ -i\xi^\bot \cdot \hat v_{0,h}
\end{pmatrix}\quad \forall \xi\neq 0.
\ee
As stated in Lemma \ref{lem:det-M}, the matrix $M$ is invertible, so that $A_1, A_2, A_3$ are well defined. We then set
\be\label{def:u-Fourier}
\begin{aligned}
\hat u_3(\xi, x_3)&:= \sum_{k=1}^3 A_k(\xi)\exp(-\lambda_k(\xi) x_3),\\
\hat u_h(\xi, x_3)&:= \frac{i}{|\xi|^2}\sum_{k=1}^3A_k(\xi)\left(-\lambda_k(\xi)\xi +  \frac{(|\xi|^2-\lambda_k^2)^2}{\lambda_k}\xi^\bot \right)\exp(-\lambda_k(\xi) x_3).
\end{aligned}
\ee
We have to check that the corresponding solution is sufficiently integrable, namely
\be\label{bornes:existence-H1}
\begin{aligned}
\int_{\R^3_+}(|\xi|^2 | \hat u_h(\xi, x_3)|^2 +| \pa_3 \hat u_h(\xi, x_3)|^2  )d\xi\:dx_3<\infty,\\
\int_{\R^3_+}(|\xi|^2 | \hat u_3(\xi, x_3)|^2 +| \pa_3 \hat u_3(\xi, x_3)|^2  )d\xi\:dx_3<\infty.
\end{aligned}
\ee
Notice that by construction, $\pa_3 \hat u_3=-i\xi \cdot \hat u_h$ (divergence-free condition), so that we only have to check   three  conditions. 

To that end, we need to investigate the behaviour of $\lambda_k, A_k$ for $\xi$ close to zero and for $\xi\to \infty$. We gather the results in the following lemma, whose proof is once again postponed to Appendix \ref{appendixexp}:
\begin{lemma}\label{lem:dev-lambda-A}\ 
\begin{itemize}
\item As $\xi \to \infty$, we have
$$
\begin{aligned}
\lambda_1&=|\xi|-\frac{1}{2}|\xi|^{-\frac{1}{3}}+O\left(|\xi|^{-\frac{5}{3}}\right),\\
\lambda_2&=|\xi|-\frac{j^2}{2}|\xi|^{-\frac{1}{3}}+O\left(|\xi|^{-\frac{5}{3}}\right),\\
\lambda_3&=|\xi|-\frac{j}{2}|\xi|^{-\frac{1}{3}}+O\left(|\xi|^{-\frac{5}{3}}\right),
\end{aligned}
$$
where $j=\exp(2i\pi/3)$, so that
\be\label{dev:A-infty}
\begin{pmatrix}
A_1(\xi)\\A_2(\xi) \\A_3(\xi)
\end{pmatrix}
= \frac{1}{3}
\begin{pmatrix}
1&1&1\\
1&j&j^2\\
1&j^2&j
\end{pmatrix}
\begin{pmatrix}
\hat v_{0,3}\\
-2|\xi|^{1/3}(i\xi \cdot \hat v_{0,h}- |\xi| \hat v_{0,3}) + O(|\hat v_0|)\\
-|\xi|^{-1/3} i \xi^\bot \cdot  \hat v_{0,h} + O(|\hat v_0|)
\end{pmatrix}.
\ee

\item As $\xi \to 0$, we have
$$
\begin{aligned}
\lambda_1&=|\xi|^3+O\left(|\xi|^7\right),\\
\lambda_2&=e^{i\frac{\pi}{4}}+O\left(|\xi|^2\right),\\
\lambda_3&=e^{-i\frac{\pi}{4}}+O\left(|\xi|^2\right).
\end{aligned}
$$
As a consequence, for $\xi$ close to zero,
\be\label{dev:A-0}
\begin{aligned}
A_1(\xi)&=\hat v_{0,3}(\xi) - \frac{\sqrt{2}}{2}\left(i \xi \cdot \hv_{0,h} + i \xi^\bot \hv_{0,h} + |\xi| \hv_{0,3}\right) + O(|\xi|^2 |\hat v_0(\xi)|),\\
A_2(\xi)&=\frac{1}{2}\left(e^{-i\pi/4} i \xi \cdot\hv_{0,h} + e^{i\pi/4}(i\xi^\bot \hv_{0,h}+ |\xi| \hv_{0,3})\right)+ O(|\xi|^2 |\hat v_0(\xi)|),\\
A_3(\xi)&=\frac{1}{2}\left(e^{i\pi/4} i \xi \cdot\hv_{0,h} + e^{-i\pi/4}(i\xi^\bot \hv_{0,h}+ |\xi| \hv_{0,3})\right)+ O(|\xi|^2 |\hat v_0(\xi)|).
\end{aligned}
\ee

\item For all $a\geq 1$, there exists a constant $C_a>0$ such that
$$
a^{-1}\leq |\xi|\leq a\Longrightarrow 
\left\{
\begin{array}{l}
|\lambda_k(\xi)| + |\Re(\lambda_k(\xi))|^{-1}\leq C_a ,\\
|A(\xi)| \leq C_a |\hat v_0(\xi)|.
\end{array}
\right.
$$

\end{itemize}
\end{lemma}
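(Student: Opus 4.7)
The lemma splits into three parts: asymptotic expansions of the roots $\la_k(\xi)$ of the characteristic polynomial
$P(\la;\xi):=(\la^2-|\xi|^2)^3+\la^2$ as $|\xi|\to\infty$ and $|\xi|\to 0$; a uniform control on compact annuli; and expansions of the coefficients $A_k(\xi)$ obtained by inverting $M(\xi)\vec A=\vec R$ with $\vec R=(\hv_{0,3},i\xi\cdot \hv_{0,h},-i\xi^\bot\cdot \hv_{0,h})^T$. The key inputs throughout are the relation $(\la_k^2-|\xi|^2)^3=-\la_k^2$ (hence $(|\xi|^2-\la_k^2)^2/\la_k=\la_k/(|\xi|^2-\la_k^2)$) and Lemma~\ref{lem:det-M}.

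For the root expansions I would first treat $|\xi|\to\infty$ by writing $\la=|\xi|+\eta$, which recasts $P=0$ as $(2|\xi|\eta+\eta^2)^3=-(|\xi|+\eta)^2$. After the rescaling $\tilde\eta:=|\xi|^{1/3}\eta$, $\eps:=|\xi|^{-4/3}$, this becomes $F(\tilde\eta,\eps):=(2\tilde\eta+\eps\tilde\eta^2)^3+(1+\eps\tilde\eta)^2=0$, and $F(\tilde\eta,0)=8\tilde\eta^3+1$ has three simple roots $\tilde\eta=-\tfrac12\zeta$ with $\zeta\in\{1,j,j^2\}$. The implicit function theorem yields $\eta_k=-\tfrac12|\xi|^{-1/3}\zeta_k+O(|\xi|^{-5/3})$, and the labeling is forced by the facts that $\la_1$ is real and $\Im\la_2>0$: assign $\zeta_1=1$, $\zeta_2=j^2$ (since $-j^2/2=\tfrac14+i\tfrac{\sqrt3}{4}$), $\zeta_3=j$. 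For $|\xi|\to 0$ I would substitute $\mu=\la^2$, so that $P=0$ reads $(\mu-|\xi|^2)^3+\mu=0$, whose $\xi=0$ limit $\mu(\mu^2+1)=0$ has the three simple roots $\mu=0,\pm i$. The implicit function theorem applied in the $\mu$ variable gives one branch $\mu(\xi)=|\xi|^6+O(|\xi|^{10})$ (whence $\la_1=|\xi|^3+O(|\xi|^7)$) and two branches $\mu(\xi)=\pm i+O(|\xi|^2)$ leading to $\la_{2,3}=e^{\pm i\pi/4}+O(|\xi|^2)$ after taking the square roots of positive real part.

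For the expansions of $A_k$, the high-frequency case is delicate because the first and second rows of $M$ coincide at leading order ($\la_k\sim|\xi|$). The remedy is to subtract $|\xi|$ times the first row from the second, which replaces $\vec R$ by $\vec R'=(\hv_{0,3},i\xi\cdot\hv_{0,h}-|\xi|\hv_{0,3},-i\xi^\bot\cdot\hv_{0,h})^T$ and the second row by $(\eta_1,\eta_2,\eta_3)=-\tfrac12|\xi|^{-1/3}(1,j^2,j)+O(|\xi|^{-5/3})$. The third row, via the identity above and $|\xi|^2-\la_k^2=-2|\xi|\eta_k+O(|\xi|^{-2/3})=|\xi|^{2/3}\zeta_k+O(|\xi|^{-2/3})$, equals $|\xi|^{1/3}(1,j,j^2)+O(|\xi|^{-1})$. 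The reduced matrix therefore factors as $\mathrm{diag}(1,-\tfrac12|\xi|^{-1/3},|\xi|^{1/3})\,V$ with
$$
V=\begin{pmatrix}1&1&1\\ 1&j^2&j\\ 1&j&j^2\end{pmatrix},\qquad V^{-1}=\tfrac13\begin{pmatrix}1&1&1\\ 1&j&j^2\\ 1&j^2&j\end{pmatrix},
$$
and multiplying through produces \eqref{dev:A-infty}. For $|\xi|\to 0$, direct evaluation shows $M(0)=\begin{pmatrix}1&1&1\\ 0&e^{i\pi/4}&e^{-i\pi/4}\\ 0&-e^{-i\pi/4}&-e^{i\pi/4}\end{pmatrix}$, invertible by Lemma~\ref{lem:det-M}. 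Solving the lower $2\times2$ block by adding/subtracting rows yields $A_2+A_3=(b_2+\tilde b_3)/\sqrt2$ and $A_2-A_3=-i(b_2-\tilde b_3)/\sqrt2$, where $b_2=i\xi\cdot\hv_{0,h}$ and $\tilde b_3=i\xi^\bot\cdot\hv_{0,h}+|\xi|\hv_{0,3}$ accounts for the $|\xi|A_1$ correction in row 3; inserting back into row 1 for $A_1$ gives \eqref{dev:A-0}, with $O(|\xi|^2|\hv_0|)$ errors coming from the expansions of $\la_{2,3}$ and the $|\xi|^3A_1$ term in row 2.

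Finally, for the compact-annulus bounds: simplicity of the six roots of $P$ on $\R^2\setminus\{0\}$ (a calculation, since $\pa_\la P$ and $P$ share no root away from $\xi=0$) together with continuity in $\xi$ ensures that $|\la_k|$ and $|\Re\la_k|^{-1}$ stay bounded on $a^{-1}\le|\xi|\le a$, and Lemma~\ref{lem:det-M} gives $|\det M|$ bounded from below there, yielding $|A(\xi)|\le C_a|\hv_0(\xi)|$ through Cramer's formula. The main obstacle I anticipate is the high-frequency analysis of $A$: the leading-order matrix is degenerate and the cube-roots-of-unity/Vandermonde structure only appears after the correct row operation; bookkeeping the remainders to keep the $O(|\hv_0|)$ error in \eqref{dev:A-infty} is where care is needed, but no genuinely new idea beyond the root expansions enters.
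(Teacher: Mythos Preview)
Your proposal is correct. The overall structure matches the paper's proof in Appendix~A, but your route to the root expansions is different: the paper computes $\la_k^2$ explicitly via Cardano's formula (see \eqref{lambda_k^2expl}) and Taylor-expands the resulting radicals, whereas you obtain the same expansions by rescaling and applying the implicit function theorem. Your approach is cleaner and avoids the somewhat heavy expansion of nested radicals; the paper's approach has the advantage of giving exact global formulas, which are later reused (Lemma~\ref{lem:restes-bis}) to show that various remainder terms belong to the algebra $E$ of \eqref{def:E}. For the $A_k$ expansions the two approaches coincide in substance: your row-reduction to $\mathrm{diag}(1,-\tfrac12|\xi|^{-1/3},|\xi|^{1/3})\,V$ is exactly the paper's passage to the $B$-variables of \eqref{def:B} (note $\vec B=V\vec A$), and your low-frequency block inversion is the paper's adjugate computation done by hand. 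One small point to make explicit in the compact-annulus part: the bound on $|\Re\la_k|^{-1}$ requires not simplicity but the separate fact that $P(\cdot;\xi)$ has no purely imaginary root for $\xi\neq 0$, which follows from $P(it;\xi)=-(t^2+|\xi|^2)^3-t^2<0$ for $t\in\R$.
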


We then decompose each integral in \eqref{bornes:existence-H1} into three pieces, one on $\{|\xi|>a\}$, one on $\{|\xi|<a^{-1}\}$ and the last one on $\{|\xi|\in (a^{-1}, a)\}$. All the integrals on $\{a^{-1}\leq |\xi|\leq a\}$ are bounded by
$$
C_a \int_{a^{-1}<|\xi|< a} | \hat v_0(\xi)|^2\:d\xi \leq C_a \| v_0\|_{H^{1/2}(\R^2)}^2.
$$
We thus focus on the two other pieces. We only treat  the term
$$
\int_{\R^3_+} |\xi|^2 |\hat u_3(\xi, x_3)|^2\: d\xi \: dx_3,
$$
since the two other terms can be evaluated using similar arguments.

$\rhd$ On the set $\{|\xi|>a\}$, the difficulty comes from the fact that the contributions of the three exponentials compensate one another; hence a rough estimate is not possible. In order to simplify the calculations,  we introduce the following notation: we set
\be\label{def:B}
\begin{aligned}
B_1&=A_1 + A_2 + A_3,\\
B_2&=A_1 + j^2 A_2 + j A_3,\\
B_3&=A_1 + j A_2 + j^2 A_3,
\end{aligned}
\ee
so that
$$
\begin{pmatrix}
A_1\\ A_2\\ A_3
\end{pmatrix}
= \frac{1}{3}
\begin{pmatrix}
1&1&1\\
1&j&j^2\\
1&j^2&j
\end{pmatrix}
\begin{pmatrix}
B_1\\ B_2\\ B_3
\end{pmatrix}.
$$
Hence we have $A_k=(B_1 + \alpha_k B_2 + \alpha_k^2B_3)/3$, where $\alpha_1=1, \alpha_2=j, \alpha_3=j^2$. Notice that $\alpha_k^3=1$ and $\sum_k \alpha_k=0$. 
According to Lemma \ref{lem:dev-lambda-A}, 
$$
\begin{aligned}
B_1&=\hat v_{0,3},\\
B_2&=-2|\xi|^{1/3}(i\xi \cdot \hat v_{0,h}- |\xi| \hat v_{0,3}) + O(|\hat v_0|),\\
B_3&=-|\xi|^{-1/3} i \xi^\bot \cdot  \hat v_{0,h} + O(|\hat v_0|).
\end{aligned}
$$
For all $\xi\in \R^2$, $|\xi|>a$, we have
$$
|\xi|^2\int_0^\infty|\hat u_3(\xi, x_3)|^2 dx_3= |\xi|^2\sum_{1\leq k,l\leq 3} A_k\bar A_l \frac{1}{\lambda_k+\bar \lambda_l}.
$$
Using the asymptotic expansions in Lemma \ref{lem:dev-lambda-A}, we infer that
$$
\frac{1}{\lambda_k+\bar \lambda_l}= \frac{1}{2|\xi|} \left(1 + \frac{\alpha_k^2 + \bar \alpha_l^2}{2} |\xi|^{-4/3} + O(|\xi|^{-8/3})\right).
$$
Therefore,
we obtain for $|\xi|\gg 1$
\begin{eqnarray*}
|\xi|^2\sum_{1\leq k,l\leq 3} A_k\bar A_l \frac{1}{\lambda_k+\bar \lambda_l}
&=&\frac{|\xi|}{2}\sum_{1\leq k,l\leq 3} A_k\bar A_l \left(1 + \frac{\alpha_k^2 + \bar \alpha_l^2}{2} |\xi|^{-4/3} + O(|\xi|^{-8/3})\right)\\
&=&\frac{|\xi|}{2}\left(|B_1|^2 + \frac{1}{2}(B_2 \bar B_1 + \bar B_2 B_1 )|\xi|^{-4/3} + O(|\hat v_0|^2)\right)\\
&=&O(|\xi| \:|\hat v_0|^2).
\end{eqnarray*}
Hence, since $ v_0\in H^{1/2}(\R^2)$, we deduce that
$$
\int_{|\xi|>a}\int_0^\infty |\xi|^2 |\hat u_3|^2\:dx_3\:d\xi<+\infty.
$$
$\rhd$ On the set $|\xi|\leq a$, we can use a crude estimate: we have
\begin{eqnarray*}
\int_{|\xi|\leq a}\int_0^\infty |\xi|^2|\hat u_3(\xi, x_3)|^2dx_3\:d\xi&\leq&C\sum_{k=1}^3\int_{|\xi|\leq a}|\xi|^2\frac{|A_k(\xi)|^2}{2\Re(\lambda_k(\xi))}\:d\xi.
\end{eqnarray*}
Using the estimates of Lemma \ref{lem:dev-lambda-A}, we infer that
\begin{eqnarray*}
&&\int_{|\xi|\leq a}\int_0^\infty |\xi|^2|\hat u_3(\xi, x_3)|^2dx_3\:d\xi\\&\leq&C\int_{|\xi|\leq a }|\xi|^2\left( (|\hat v_{0,3}(\xi)|^2 + |\xi|^2 |\hat v_{0,h}(\xi)|^2 )\frac{1}{|\xi|^3} + |\xi|^2 |\hat v_{0}(\xi)|^2\right)\:d\xi\\
&\leq & C \int_{|\xi|\leq a }\left(\frac{|\hat v_{0,3}(\xi)|^2}{|\xi|}+ |\xi | \: |\hat v_{0,h}(\xi)|^2 \right)\:d\xi<\infty
\end{eqnarray*}
thanks to the assumption \eqref{hyp:v03} on $\hat v_{0,3}$. In a similar way, we have
$$
\begin{aligned}
\int_{|\xi|\leq a}\int_0^\infty |\xi|^2|\hat u_h(\xi, x_3)|^2dx_3\:d\xi &\leq C \int_{|\xi|\leq a }\left(\frac{|\hat v_{0,3}(\xi)|^2}{|\xi|}+ |\xi | \: |\hat v_{0,h}(\xi)|^2 \right)\:d\xi,\\
\int_{|\xi|\leq a}\int_0^\infty |\pa_3\hat u_h(\xi, x_3)|^2dx_3\:d\xi &\leq C \int_{|\xi|\leq a } |\hat v_{0}|^2\:d\xi.
\end{aligned}
$$
Gathering all the terms, we deduce that
$$
\int_{\R^3_+}(|\xi|^2 |\hat u(\xi, x_3)|^2 + |\pa_3 \hat u(\xi, x_3)|^2)d\xi\:dx_3<\infty,
$$
so that $\na u\in L^2(\R^3_+)$.
\end{proof}

\begin{remark}
\label{rem:decay-classical}
Notice that thanks to the exponential decay in Fourier space, for all $p\in \N$ with $p\geq 2$, there exists a constant $C_p>0$ such that
$$
\int_1^\infty\int_{\R^2} |\na^p u|^2\leq C_p\|v_0\|_{H^{1/2}}^2.
$$
\end{remark}

$\bullet$ We now extend the definition of a solution to boundary data in $H^{1/2}_{uloc}(\R^2)$. 
We introduce the sets
\be\label{def:K}
\begin{aligned}
\Kc&:=\left\{u\in H^{1/2}_{uloc}(\R^2), \ \exists U_h\in H^{1/2}_{uloc}(\R^2)^2,\ u=\na_h\cdot U_h\right\},\\
\bK&:=\left\{u\in H^{1/2}_{uloc}(\R^2)^3, \ u_3\in \Kc\right\}.
\end{aligned}
\ee
In order to extend the definition of solutions to data which are only locally square integrable, we will first derive a representation formula for $v_0\in H^{1/2}(\R^2)$. We will prove that the formula still makes sense when $v_0\in \bK$, and this will allow us to define a solution with boundary data in $\bK$.

To that end, let us introduce some notation. According to the proof of Proposition \ref{prop:ex/uni-SC-H12}, there exists $L_1,L_2,L_3:\R^2\to \mathcal M_3(\mathbb C)$ and $q_1,q_2,q_3:\R^2\to \C^3$ such that
\be\label{def:up}\begin{aligned}
\hat u(\xi,x_3)&= \sum_{k=1}^3 L_k(\xi) \hat v_0(\xi)\exp(-\lambda_k(\xi) x_3),\\
\hat p(\xi,x_3)&= \sum_{k=1}^3 q_k(\xi) \cdot\hat v_0(\xi)\exp(-\lambda_k(\xi) x_3).
\end{aligned}
\ee
For further reference, we state the following lemma:
\begin{lemma}
For all $k\in \{1,2,3\}$, for all $\xi \in \R^2$, the following identities hold
$$
(|\xi|^2-\lambda_k^2) L_k + \begin{pmatrix}
-L_{k,21} & -L_{k,22} &-L_{k,23}\\
L_{k,11}&L_{k,12}&L_{k,13}\\
0&0&0
\end{pmatrix}
+ \begin{pmatrix}
i\xi_1q_{k,1} & &i\xi_1 q_{k,2} & i\xi_1q_{k,3}\\
i\xi_2q_{k,1} & &i\xi_2 q_{k,2} & i\xi_2q_{k,3}\\
-\lambda_kq_{k,1} & &-\lambda_k q_{k,2} &-\lambda_kq_{k,3}
\end{pmatrix}=0
$$
and for $j=1,2,3$, $k=1,2,3$,
$$
i\xi_1 L_{k,1j} + i\xi_2 L_{k,2j} -\lambda_k L_{k,3j}=0.
$$

\label{lem:eq:L_k}
\end{lemma}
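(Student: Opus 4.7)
The identities are nothing but the Fourier transform of the Stokes-Coriolis system \eqref{SC-R3+} applied to the ansatz \eqref{def:up}, read off mode by mode and column by column. The plan is to substitute and match.

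First I would take the horizontal Fourier transform of \eqref{SC-R3+}, which (cf.\ \eqref{eq:SC-Fourier}) reads
\begin{equation*}
(|\xi|^2-\partial_3^2)\hat u + \begin{pmatrix}-\hat u_2\\ \hat u_1\\ 0\end{pmatrix} + \begin{pmatrix}i\xi_1\hat p\\ i\xi_2\hat p\\ \partial_3\hat p\end{pmatrix}=0,\qquad i\xi_1\hat u_1+i\xi_2\hat u_2+\partial_3\hat u_3=0.
\end{equation*}
Plug in the ansatz $\hat u(\xi,x_3)=\sum_{k=1}^3 L_k(\xi)\hat v_0(\xi)e^{-\lambda_k x_3}$ and $\hat p(\xi,x_3)=\sum_{k=1}^3 q_k(\xi)\cdot\hat v_0(\xi)e^{-\lambda_k x_3}$. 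Each application of $\partial_3$ brings down a factor $-\lambda_k$, so each exponential mode contributes
\begin{equation*}
\Bigl[(|\xi|^2-\lambda_k^2)L_k\hat v_0 + \begin{pmatrix}-(L_k\hat v_0)_2\\(L_k\hat v_0)_1\\0\end{pmatrix} + \begin{pmatrix}i\xi_1\\ i\xi_2\\ -\lambda_k\end{pmatrix}(q_k\cdot\hat v_0)\Bigr]e^{-\lambda_k x_3},
\end{equation*}
and analogously for the divergence equation.

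The next step is a linear-independence argument. Since, by Lemma~\ref{lem:det-M} and the discussion following \eqref{eq:lambda}, the roots $\lambda_1,\lambda_2,\lambda_3$ are pairwise distinct for every $\xi\neq 0$, the functions $x_3\mapsto e^{-\lambda_k x_3}$ are linearly independent on $(0,\infty)$. Hence each bracket above must vanish separately for $k=1,2,3$. Finally, since the resulting identity
\begin{equation*}
(|\xi|^2-\lambda_k^2)L_k\hat v_0 + \begin{pmatrix}-(L_k\hat v_0)_2\\(L_k\hat v_0)_1\\0\end{pmatrix} + \begin{pmatrix}i\xi_1\\ i\xi_2\\ -\lambda_k\end{pmatrix}(q_k\cdot\hat v_0)=0
\end{equation*}
must hold for every admissible boundary datum $\hat v_0\in\mathbb C^3$, I read it as an equality of linear maps on $\mathbb C^3$, i.e.\ an equality of $3\times 3$ matrices: writing out the coefficients $\hat v_{0,j}$ for $j=1,2,3$ recovers exactly the stated matrix identity. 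The same argument applied to the divergence-free condition column by column yields $i\xi_1 L_{k,1j}+i\xi_2 L_{k,2j}-\lambda_k L_{k,3j}=0$.

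\textbf{Expected obstacle.} There is essentially no obstacle: the content of the lemma is that the ansatz \eqref{def:up} really does solve the Fourier-transformed PDE, which was how $L_k$ and $q_k$ were constructed in the first place. The only point that deserves care is making sure that the modes are indeed linearly independent so that the decomposition holds mode by mode, which is guaranteed by the simplicity of the roots of \eqref{eq:lambda} (a fact already used in the uniqueness part of Proposition~\ref{prop:ex/uni-SC-H12}).
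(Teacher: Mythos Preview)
Your proposal is correct and follows essentially the same route as the paper: plug the ansatz \eqref{def:up} into the Fourier-transformed system \eqref{eq:SC-Fourier}, use the linear independence of the exponentials $e^{-\lambda_k x_3}$ (valid since the $\lambda_k$ are pairwise distinct for $\xi\neq 0$) to isolate each mode, and then use that $\hat v_0$ is arbitrary to read off the matrix identity column by column. The only cosmetic difference is the order in which the paper applies the two ``for all'' quantifiers (over $v_0$ and over $x_3$), but the substance is identical.
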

\begin{proof}
Let $v_0\in H^{1/2}(\R^2)^3$ such that $v_{0,3}=\na_h\cdot V_h$ for some $V_h\in H^{1/2}(\R^2)$. Then, according to Proposition \ref{prop:ex/uni-SC-H12}, the couple $(u,p)$ defined by \eqref{def:up} is a solution of \eqref{SC-R3+}. Therefore it satisfies \eqref{eq:SC-Fourier}. Plugging the definition \eqref{def:up} into \eqref{eq:SC-Fourier}, we infer that for all $x_3>0$,
\be\label{eq:calA}
\int_{\R^2}  \sum_{k=1}^3\exp(-\lambda_k x_3) \mathcal A_k(\xi)\hat v_0(\xi )\:d\xi=0,
\ee
where
$$
\mathcal A_k:=(|\xi|^2-\lambda_k^2) L_k + \begin{pmatrix}
-L_{k,21} & -L_{k,22} &-L_{k,23}\\
L_{k,11}&L_{k,12}&L_{k,13}\\
0&0&0
\end{pmatrix}
+ \begin{pmatrix}
i\xi_1q_{k,1} & &i\xi_1 q_{k,2} & i\xi_1q_{k,3}\\
i\xi_2q_{k,1} & &i\xi_2 q_{k,2} & i\xi_2q_{k,3}\\
-\lambda_kq_{k,1} & &-\lambda_k q_{k,2} &-\lambda_kq_{k,3}
\end{pmatrix}.
$$
Since \eqref{eq:calA} holds for all $v_0$, we obtain
$$
\sum_{k=1}^3\exp(-\lambda_k x_3) \mathcal A_k(\xi)=0 \quad \forall \xi\ \forall x_3,
$$
and since $\lambda_1,\lambda_2,\lambda_3$ are distinct for all $\xi\neq 0$, we deduce eventually that $\mathcal A_k(\xi)=0$ for all $\xi$ and for all $k$.

The second identity follows in a similar fashion from the divergence-free condition.
\end{proof}

Our goal is now to derive a representation formula for $u$, based on the formula satisfied by its Fourier transform, in such a way that the formula still makes sense when $v_0\in \bK$. The crucial part is to understand the action of the operators $\mathrm{Op}(L_k(\xi) \phi(\xi))$ on $L^2_{uloc}$ functions, where $\phi\in \mathcal C^\infty_0(\R^2)$. To that end, we will need to decompose $L_k(\xi)$ for $\xi$ close to zero into several terms.

Lemma \ref{lem:dev-lambda-A} provides asymptotic developments of $L_1,L_2,L_3$ and $\alpha_1,\alpha_2,\alpha_3$ as $|\xi|\ll 1$ or $|\xi|\gg 1$. In particular, we have, for $|\xi|\ll 1$,
\begin{eqnarray}
\label{exprL1}L_1(\xi)&=&\frac{\sqrt 2}{2|\xi|}\begin{pmatrix}
\xi_2(\xi_2-\xi_1) & -\xi_2(\xi_2+\xi_1) & {-i\sqrt{2}\xi_2}\\
\xi_1(\xi_1-\xi_2) & \xi_1(\xi_2+\xi_1) &{i\sqrt{2}\xi_1}\\
i|\xi|(\xi_2-\xi_1)& -i|\xi|(\xi_2+\xi_1)&\sqrt{2}|\xi|
\end{pmatrix}\\&&\nonumber+\begin{pmatrix}
O(|\xi|^2)& O(|\xi|^2) & O(|\xi|)
\end{pmatrix},
\end{eqnarray}
\begin{eqnarray}
\nonumber L_2(\xi)&=&\frac{1}{2}\begin{pmatrix}
1&i& \ds\frac{2i(-\xi_1+\xi_2)}{|\xi|}\\
-i &1&\ds\frac{-2i(\xi_1+\xi_2)}{|\xi|}\\
{i(\xi_1e^{-i\pi/4} -\xi_2 e^{i\pi/4} )} & {i(\xi_2e^{-i\pi/4} +i\xi_1 e^{i\pi/4} )} & {e^{i\pi/4}}
\end{pmatrix}\\\nonumber&&+ \begin{pmatrix}
O(|\xi|^2)& O(|\xi|^2) & O(|\xi|)
\end{pmatrix},
\end{eqnarray}
\begin{eqnarray}
\nonumber L_3(\xi)&=&
\frac{1}{2}\begin{pmatrix}
1&-i& \ds\frac{2i(\xi_1+\xi_2)}{|\xi|}\\
i &1&\ds\frac{-2i(\xi_1-\xi_2)}{|\xi|}\\
{i(\xi_1e^{i\pi/4} -\xi_2 e^{-i\pi/4} )}&{i(\xi_2e^{i\pi/4} +i\xi_1 e^{-i\pi/4} )} &{e^{-i\pi/4}}
\end{pmatrix}\\\nonumber&&+ \begin{pmatrix}
O(|\xi|^2)& O(|\xi|^2) & O(|\xi|)
\end{pmatrix}.
\end{eqnarray}
The remainder terms are to be understood column-wise.
Notice that the third column of $L_k$, i.e. $L_k e_3$, always acts on $\hat v_{0,3}=i\xi \cdot \hat V_h$.
We thus introduce the following notation: for $k=1,2,3$, $M_k:=(L_ke_1\: L_ke_2)\in \mathcal M_{3,2}(\C)$, and $N_k:=i L_k e_3\prescript{t}{}{\xi}\in \mathcal M_{3,2}(\C)$. 
$M_k^1$ (resp. $N_k^1$) denotes the $3\times 2$ matrix whose coefficients are the nonpolynomial and homogeneous terms of order one in $M_k$ (resp. $N_k$) for $\xi$ close to zero. For instance,
$$
M_1^1:=\frac{\sqrt 2}{2|\xi|}\begin{pmatrix}
\xi_2(\xi_2-\xi_1) & -\xi_2(\xi_2+\xi_1) \\
-\xi_1(\xi_2-\xi_1) & \xi_1(\xi_2+\xi_1) \\
0&0
\end{pmatrix}
,\quad
N_1^1:=\frac{i}{|\xi|}\begin{pmatrix}
-\xi_2\xi_1  &\xi_2^2\\
\xi_1^2&\xi_1\xi_2\\
0& 0
\end{pmatrix}.
$$
We also set $M_k^{rem}=M_k- M_k^1$, $N_k^{rem}:=N_k-N_k^1$, so that for $\xi$ close to zero,
$$
\begin{aligned}
&M_1^{rem}=O(|\xi|),\quad\text{and for }k=2,\ 3,\quad M_k^{rem}=O(1),\\
&\forall k\in \{1,2,3\},\quad N_k^{rem}=O(|\xi|).
\end{aligned}
$$
There are polynomial terms of order one in $M_1^{rem}$ and $N_k^{rem}$ (resp. of order $0$ and $1$ in $M_k^{rem}$ for $k=2,\ 3$) which account for the fact that the remainder terms are not $O(|\xi|^2)$. However, these polynomial terms do not introduce any singularity when there are differentiated and thus, using the results of Appendix \ref{appendixrestes}, we get, for any integer $q\geq 1$,
\be
\left|\na_\xi^q M_k^{rem}\right|, \ \left|\na_\xi^q N_k^{rem}\right| =O(|\xi|^{2-q} +1)\quad \text{for }|\xi|\ll 1.
\ee

$\rhd$ Concerning the Fourier multipliers of order one $M_k^1$ and $N_k^1$, we will rely on the following lemma, which is proved in Appendix \ref{appendixDroniouImbert}:

\begin{lemma}
There exists a constant $C_I$ such that for all $ i,j\in \{1,2\}$, for any function $g\in \mathcal S(\R^2)$, for all $\zeta\in \mathcal C^\infty_0(\R^2)$ and for all $K>0$,
\begin{align}\label{reprnoyau1BF}
&\mathrm{Op}\left(\frac{\xi_i\xi_j}{|\xi|}\zeta(\xi)\right)g(x)\nonumber\\
=\ &C_I\int_{\R^2}dy\left[\frac{\delta_{i,j}}{|x-y|^3} - 3\frac{(x_i-y_i) (x_j-y_j)}{|x-y|^5}\right]\times\\
&\hskip4cm\times\left\{ \rho\ast g(x)- \rho\ast g(y)- \na\rho\ast g(x)\cdot (x-y)\mathbf{1_{|x-y|\leq K}}\right\},\nonumber
\end{align}
where $\rho:=\mathcal F^{-1}\zeta\in\mathcal S\left(\mathbb R^2\right)$.

\label{lem:noyau-ordre1}
\end{lemma}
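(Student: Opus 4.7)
The plan is to reduce to a Schwartz function, then identify the Fourier multiplier associated with the right-hand side. Since $\zeta\in\mathcal C^\infty_0(\R^2)$, the function $\rho=\mathcal F^{-1}\zeta$ lies in $\mathcal S(\R^2)$, and for $g\in\mathcal S$ one has $\widehat{\rho\ast g}(\xi)=\zeta(\xi)\hat g(\xi)$, so
\[
\mathrm{Op}\!\left(\frac{\xi_i\xi_j}{|\xi|}\zeta(\xi)\right)g(x)=\mathrm{Op}\!\left(\frac{\xi_i\xi_j}{|\xi|}\right)f(x),\qquad f:=\rho\ast g\in\mathcal S(\R^2),
\]
and the task reduces to proving the stated formula with $f$ replacing $\rho\ast g$ on the right-hand side. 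Denoting the kernel by $\kappa(z):=\delta_{ij}|z|^{-3}-3z_iz_j|z|^{-5}$, which is homogeneous of degree $-3$, Taylor's theorem gives $f(x)-f(y)-\nabla f(x)\cdot(x-y)=O(|x-y|^2)$ near $y=x$, so the integrand is $O(|x-y|^{-1})$ at the singularity and thus integrable in dimension two, while the $|x-y|^{-3}$ decay of $\kappa$ takes care of integrability at infinity. The formula is moreover independent of $K$: taking $K_1<K_2$, the difference reduces to $\nabla f(x)\cdot\int_{K_1<|z|<K_2}\kappa(z)z\,dz$, which vanishes by the parity of $\kappa(z)z_k$.

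The core step is the computation of the Fourier multiplier of the right-hand side. Changing variable $z=x-y$ and applying the Fourier transform in $x$, using $\mathcal F_x[f(x-z)](\xi)=e^{-iz\cdot\xi}\hat f(\xi)$ and $\mathcal F_x[\nabla f(x)](\xi)=i\xi\hat f(\xi)$, the Fourier transform of the right-hand side equals $C_I\, m(\xi)\,\hat f(\xi)$, where
\[
m(\xi)=\int_{|z|\leq K}\kappa(z)\bigl[1-e^{-iz\cdot\xi}-iz\cdot\xi\bigr]dz+\int_{|z|>K}\kappa(z)\bigl[1-e^{-iz\cdot\xi}\bigr]dz,
\]
a L\'evy--Khintchine-type expression whose absolute convergence near $z=0$ is guaranteed by the Taylor subtraction. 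It then remains to establish $m(\xi)=C_I^{-1}\xi_i\xi_j/|\xi|$, which will match the multiplier of $\mathrm{Op}(\xi_i\xi_j/|\xi|)$ and conclude the proof by injectivity of the Fourier transform on Schwartz functions.

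To compute $m(\xi)$ I would exploit that $\kappa(z)=-\partial_{z_i}\partial_{z_j}(|z|^{-1})$ away from the origin, and integrate by parts twice on the annulus $\varepsilon<|z|<R$, first moving $\partial_{z_j}$ onto $1-e^{-iz\cdot\xi}$, then $\partial_{z_i}$ onto the resulting factor $i\xi_j e^{-iz\cdot\xi}$. The outer boundary $|z|=R$ contributes nothing in the limit $R\to\infty$, and the bulk term converges to $\xi_i\xi_j\,\mathcal F(|z|^{-1})(\xi)=2\pi\,\xi_i\xi_j/|\xi|$ by the classical two-dimensional Riesz identity, which fixes the value of $C_I$. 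The main obstacle lies in the inner boundary $|z|=\varepsilon$: since $\kappa$ fails to have zero spherical mean (for instance $\int_{S^1}(1-3n_i^2)\,d\sigma=-\pi$ when $i=j$), the first integration by parts produces a boundary term of order $\varepsilon^{-1}$ that does not vanish by itself. The role of the Taylor correction $-iz\cdot\xi$ inside $|z|\leq K$ is precisely to generate a matching boundary contribution of the same order, so that the two divergences cancel exactly and the $\varepsilon\to 0$ limit is finite. This cancellation is also the structural reason for the insensitivity of the formula to the cutoff $K$, and for the fact that a naive principal value of $\int\kappa(z)[1-e^{-iz\cdot\xi}]dz$ would not suffice.
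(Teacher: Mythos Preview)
Your approach is correct in spirit and genuinely different from the paper's, but the decisive step is only sketched and your description of the boundary mechanism is imprecise.

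The paper argues in physical space, starting from the \emph{left-hand side}. It writes
\[
\mathrm{Op}\Bigl(\tfrac{\xi_i\xi_j}{|\xi|}\zeta\Bigr)g(x)=\mathcal F^{-1}\bigl(|\xi|^{-1}\bigr)\ast\partial_{ij}(\rho\ast g)(x)=C_I\int_{\R^2}\frac{1}{|y|}\,\partial_{ij}(\rho\ast g)(x+y)\,dy,
\]
introduces the auxiliary function $U_x(y)=\rho\ast g(x+y)-\rho\ast g(x)-\theta(y)\,(y\cdot\nabla)\rho\ast g(x)$ with an even cutoff $\theta$, observes that $\partial_{ij}(\rho\ast g)(x+y)$ and $\partial_{ij}U_x(y)$ differ by an odd function (hence have the same integral against $|y|^{-1}$ on any annulus), and then integrates by parts twice. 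Because $U_x(y)=O(|y|^2)$ and $\nabla U_x(y)=O(|y|)$ near $0$, both inner boundary terms vanish; the outer ones vanish by Schwartz decay. A final change of variables $y\mapsto x-y$ and one more parity argument yield the stated formula.

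Your route is the Fourier dual: start from the right-hand side, compute its symbol $m(\xi)$ via a L\'evy--Khintchine-type integral, and match it with $\xi_i\xi_j/|\xi|$. This is legitimate and relies on the same two facts (the Riesz identity $\mathcal F(|z|^{-1})=2\pi/|\xi|$ and $\kappa=-\partial_i\partial_j(|z|^{-1})$). What is missing is the actual computation: you write ``I would integrate by parts'' and then assert that the Taylor correction cancels an $\varepsilon^{-1}$ boundary term. In fact, after your two integrations by parts on $\{\varepsilon<|z|<R\}$, the inner boundary terms involve angular integrals of the form $\int_{S^1}n_in_jn_k\,d\sigma$ and $\int_{S^1}n_j\,d\sigma$, both of which vanish by parity \emph{without} the Taylor subtraction; there is no $\varepsilon^{-1}$ term to cancel. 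The genuine role of the subtraction $-iz\cdot\xi$ is to make $m(\xi)$ an \emph{absolutely convergent} integral rather than a symmetric principal value (since $\kappa(z)(1-e^{-iz\cdot\xi})\sim|z|^{-2}$ near $0$, which is borderline non-integrable in $\R^2$, while $\kappa(z)(1-e^{-iz\cdot\xi}-iz\cdot\xi)\sim|z|^{-1}$ is integrable). You also need to check that the boundary contributions at $|z|=K$ coming from the jump in the integrand vanish; they do, again by angular parity, but this should be said. Finally, the outer boundary at $|z|=R$ in the second integration by parts is $O(1)$ in modulus and requires a stationary-phase (or Riemann--Lebesgue) argument to show it tends to zero, not just decay of $|z|^{-1}$.

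In short: your strategy works, but the paragraph computing $m(\xi)$ needs to be written out. The paper's physical-space argument avoids these oscillatory-integral subtleties by working directly with the Schwartz function $\rho\ast g$ and its Taylor remainder $U_x$, which makes the vanishing of all boundary terms immediate.
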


\begin{definition}\label{defIordre1}
If $L$ is a homogeneous, nonpolynomial function of order one in $\R^2$, of the form
$$
L(\xi)=\sum_{1\leq i,j\leq 2} a_{ij}\frac{\xi_i\xi_j}{|\xi|},
$$
then we define, for $\varphi\in W^{2,\infty}(\R^2)$,
$$
\mathcal I[L] \varphi(x)
:=\sum_{1\leq i,j\leq 2} a_{ij}\int_{\R^2}dy\gamma_{ij}(x-y)\left\{ \varphi (x)- \varphi (y)- \na \varphi(x)\cdot (x-y)\mathbf{1_{|x-y|\leq K}}\right\},
$$
where
$$
\gamma_{i,j}(x)=C_I \left(\frac{\delta_{i,j}}{|x|^3} - 3\frac{x_i x_j}{|x|^5}\right).
$$
\end{definition}
\begin{remark}
The value of the number $K$ in the formula \eqref{reprnoyau1BF} and in Definition \ref{defIordre1} is irrelevant, since for all $\varphi\in W^{2,\infty}(\R^2)$, for all $0<K<K'$,
$$
\int_{\R^2}dy\gamma_{ij}(x-y)\na \varphi(x)\cdot (x-y)\mathbf{1_{K<|x-y|\leq K'}}=0
$$
by symmetry arguments.
\end{remark}

We then have the following bound:
\begin{lemma}
Let $\varphi\in W^{2,\infty}(\R^2)$. Then for all $1\leq i,j\leq 2$,
$$
\left\| \mathcal I\left[\frac{\xi_i\xi_j}{|\xi|}\right] \varphi\right\|_{L^\infty(\R^2)}\leq C \|\varphi\|_\infty^{1/2}\|\na^2 \varphi\|_{\infty}^{1/2}.
$$

\label{lemauxorder1}
\end{lemma}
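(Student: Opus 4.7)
The estimate is a very classical one for singular integrals of order one, and it will follow from a near/far splitting of the integration domain at an adjustable scale $K$, exploiting the freedom in the definition of $\mathcal{I}$ recorded in the Remark. The key feature of the kernel $\gamma_{ij}(z)=C_I(\delta_{ij}/|z|^3-3z_iz_j/|z|^5)$ is homogeneity of degree $-3$, so that $|\gamma_{ij}(z)|\le C|z|^{-3}$. I will bound the integrand with the triangle inequality; no subtle cancellation is needed, unlike in the proof that the operator itself is well-defined.

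\textbf{Step 1 (freedom in $K$).} By the Remark following Definition \ref{defIordre1}, the expression $\mathcal{I}[\xi_i\xi_j/|\xi|]\varphi(x)$ is independent of the truncation parameter $K>0$ used in its definition. I will therefore treat $K$ as a free parameter, fix $x\in\mathbb{R}^2$ arbitrarily, and split
$$\mathcal{I}[\xi_i\xi_j/|\xi|]\varphi(x)=\int_{|x-y|\le K}+\int_{|x-y|>K}.$$

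\textbf{Step 2 (near-field).} On $\{|x-y|\le K\}$ Taylor's formula yields
$$|\varphi(x)-\varphi(y)-\nabla\varphi(x)\cdot(x-y)|\le \tfrac{1}{2}\|\nabla^2\varphi\|_\infty |x-y|^2.$$
Combined with $|\gamma_{ij}(x-y)|\le C|x-y|^{-3}$ and polar coordinates in $\mathbb R^2$, this gives
$$\left|\int_{|x-y|\le K}\right|\le C\|\nabla^2\varphi\|_\infty\int_0^K r^{-3}\cdot r^2\cdot r\,dr=CK\|\nabla^2\varphi\|_\infty.$$

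\textbf{Step 3 (far-field).} On $\{|x-y|>K\}$ the indicator function in the Taylor correction vanishes, so that the integrand reduces to $\gamma_{ij}(x-y)(\varphi(x)-\varphi(y))$. Using $|\varphi(x)-\varphi(y)|\le 2\|\varphi\|_\infty$ and integrating the absolute value of the kernel,
$$\left|\int_{|x-y|>K}\right|\le 2\|\varphi\|_\infty\int_K^\infty r^{-3}\cdot r\,dr=\frac{C\|\varphi\|_\infty}{K}.$$

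\textbf{Step 4 (optimization).} Combining the two estimates,
$$\left|\mathcal{I}[\xi_i\xi_j/|\xi|]\varphi(x)\right|\le C\left(K\|\nabla^2\varphi\|_\infty+\frac{\|\varphi\|_\infty}{K}\right).$$
Since the left-hand side is independent of $K$, I minimize the right-hand side by choosing $K=(\|\varphi\|_\infty/\|\nabla^2\varphi\|_\infty)^{1/2}$, obtaining the desired interpolation bound $C\|\varphi\|_\infty^{1/2}\|\nabla^2\varphi\|_\infty^{1/2}$, uniformly in $x$. There is no real obstacle: the only subtlety is to use the $K$-independence from the Remark to justify the optimization step, since the definition of $\mathcal{I}$ a priori fixes $K$.
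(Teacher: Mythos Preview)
Your proof is correct and follows essentially the same approach as the paper: a near/far splitting at scale $K$, Taylor's formula on the near region giving $CK\|\nabla^2\varphi\|_\infty$, the trivial $L^\infty$ bound on the far region giving $CK^{-1}\|\varphi\|_\infty$, and optimization in $K$. The only cosmetic difference is that the paper writes the far-field contribution as two separate terms (one with $\varphi(x)$, one with $\varphi(y)$) before bounding them, whereas you bound $|\varphi(x)-\varphi(y)|$ directly.
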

\begin{remark}\label{rem:convol}
We will often apply the above Lemma with $\varphi=\rho\ast g$, where  $\rho\in \mathcal C^2(\R^2)$  is such that $\rho$  and $\na^2 \rho$ have bounded second order moments in $L^2$, and  $g\in L^2_{uloc}(\R^2)$. In this case, we have
$$
\begin{aligned}
\|\varphi\|_{\infty}\leq C \|g\|_{L^2_{uloc}} \|(1+|\cdot|^2)\rho\|_{L^2(\R^2)} ,\\
\|\na^2\varphi\|_{\infty}\leq C \|g\|_{L^2_{uloc}} \|(1+|\cdot|^2)\na^2\rho\|_{L^2(\R^2)} .
\end{aligned}
$$
Indeed, 
\begin{align*}
\left\| \rho\ast {g}\right\|_{L^\infty}\leq&\sup_{x\in\mathbb R^2}\left(\int_{\R^2}\frac{1}{1+|x-y|^4} |g(y)|^2\:dy\right)^{1/2}\left(\int_{\R^2}(1+|x-y|^4)|\rho(x-y)|^2\:dy\right)^{1/2}\\
\leq&C\|g\|_{L^2_{uloc}}\|(1+|\cdot|^2)\rho\|_{L^2(\R^2)}.
\end{align*}
The $L^\infty$ norm of $\na^2\varphi$ is estimated exactly in the same manner, simply replacing $\rho$ by $\na^2 \rho$.

\end{remark}

\begin{proof}[Proof of Lemma \ref{lemauxorder1}]
We  split the integral in \eqref{reprnoyau1BF} into three parts
\begin{eqnarray}
\mathcal I \left[\frac{\xi_i\xi_j}{|\xi|}\right]\varphi(x)
&=&\int_{|x-y|\leq K}dy\gamma_{ij}(x-y)\left\{ \varphi(x)- \varphi(y)- \na\varphi(x)\cdot (x-y)\right\}\nonumber\\
&+&\int_{|x-y|\geq K}dy\gamma_{ij}(x-y)\varphi(x)\label{splitABC}\\
&-&\int_{|x-y|\geq K}dy\gamma_{ij}(x-y)\varphi(y)\nonumber\\
&=&\Aa(x)+\Bb(x)+\Cc(x).\nonumber
\end{eqnarray}

Concerning the first integral in \eqref{splitABC}, Taylor's formula implies
$$
\left|\Aa(x)\right|\leq C\left\|\nabla^2\varphi\right\|_{L^\infty}\int_{|x-y|\leq K}\frac{dy}{|x-y|}\leq CK\left\|\nabla^2\varphi\right\|_{L^\infty}.
$$
For the second and third integral in \eqref{splitABC},
$$
\left|\Bb(x)\right| + \left|\Cc(x)\right|\leq C \|\varphi\|_{\infty}\int_{|x-y|\geq K}\frac{dy}{|x-y|^3}
\leq CK^{-1} \|\varphi\|_{\infty}.
$$
We infer that for all $K>0$,
$$
\left\| \mathcal I\left[\frac{\xi_i\xi_j}{|\xi|}\right] \varphi\right\|_\infty \leq C\left(K\left\|\nabla^2\varphi\right\|_{\infty} + K^{-1} \|\varphi\|_{\infty}\right).
$$
Optimizing in $K$ (i.e. choosing $K=\|\varphi\|_\infty^{1/2}/\|\na^2\varphi\|_\infty^{1/2} $), we obtain the desired inequality.
\end{proof}

$\rhd$ For the remainder terms  $M_k^{rem}, N_k^{rem}$ as well as the high-frequency terms, we will use the following estimates:
\begin{lemma}[Kernel estimates]
Let $\phi\in\mathcal C^\infty_0(\R^2)$ such that $\phi(\xi)=1$ for $|\xi|\leq 1$. Define
$$
\begin{aligned}
\varphi_{HF}(x_h,x_3)&:=\mathcal{F}^{-1}\left(\sum_{k=1}^3(1-\phi)(\xi )L_k(\xi) \exp(-\lambda_k(\xi )x_3)\right),\\
\psi_1(x_h,x_3)&:=\mathcal{F}^{-1}\left(\sum_{k=1}^3\phi(\xi )M_k^{rem}(\xi)  \exp(-\lambda_k(\xi )x_3)\right),\\
\psi_2(x_h,x_3)&:=\mathcal{F}^{-1}\left(\sum_{k=1}^3\phi(\xi )N_k^{rem}(\xi) \exp(-\lambda_k(\xi )x_3)\right).
\end{aligned}
$$
Then the following estimates hold:
\begin{itemize}
\item for all $q\in\mathbb N$, there exists $c_{0,q}>0$, such that for all $\alpha,\beta>c_{0,q}$,  there exists $C_{\alpha,\beta,q}>0$ such that 
\begin{equation*}
|\na^q\varphi_{HF}(x_h,x_3)|\leq \frac{C_{\alpha,\beta,q}}{|x_h|^\alpha + |x_3|^\beta};
\end{equation*}
\item for all $\alpha\in (0,2/3)$, for all $q\in \N$, there $C_{\alpha,q}>0$ such that
\begin{equation*}
|\na^q \psi_1(x_h,x_3)|\leq \frac{C_{\alpha,q}}{|x_h|^{3+q} + |x_3|^{\alpha+\frac{q}{3}}};
\end{equation*}
\item for all $\alpha\in (0,2/3)$, for all $q\in \N$, there exists $C_{\alpha,q}>0$ such that
\begin{equation*}
|\na^q \psi_2(x_h,x_3)|\leq \frac{C_{\alpha,q}}{|x_h|^{3+q} + |x_3|^{\alpha+\frac{q}{3}}}.
\end{equation*}
\end{itemize}
\label{lem:kernels}
\end{lemma}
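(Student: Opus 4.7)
The plan is to establish, for each kernel $K\in\{\varphi_{HF},\psi_1,\psi_2\}$ and each $q$, two \emph{separate} pointwise bounds on $|\na^q K(x_h,x_3)|$: one of the form $C/|x_h|^a$ obtained by integration by parts in $\xi$, and one of the form $C/|x_3|^b$ obtained from the decay of the exponentials $e^{-\lambda_k(\xi)x_3}$ in $x_3$. The two then combine through the elementary inequality $\min(A^{-1},B^{-1})\leq 2(A+B)^{-1}$ to give the stated estimates.

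For $\varphi_{HF}$, the cut-off $(1-\phi)$ confines $\xi$ to $|\xi|\geq c>0$; by Lemma \ref{lem:dev-lambda-A} one has $\Re\lambda_k(\xi)\sim|\xi|$ on this region, so $|e^{-\lambda_k x_3}|\leq e^{-c|\xi|x_3}$. This immediately yields any desired polynomial decay in $x_3$ (each $x_3$-derivative brings down a factor $|\lambda_k|\lesssim|\xi|$, absorbed by the exponential). For the $|x_h|^{-\alpha}$ bound I would integrate by parts $\alpha$ times in $\xi$: derivatives falling on $(1-\phi)L_k$ remain bounded on the support in view of the large-$|\xi|$ expansions, while derivatives falling on $e^{-\lambda_k x_3}$ produce factors $x_3\nabla_\xi\lambda_k=O(x_3)$ reabsorbed via $(x_3|\xi|)^n e^{-c|\xi|x_3}\lesssim 1$.

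For $\psi_1$ and $\psi_2$ the support is $|\xi|\leq 2$. The $|x_h|^{-(3+q)}$ bound proceeds by differentiating $\na^q$ under the integral (this yields a factor $(-i\xi)^q$) and then integrating by parts $3+q$ times in $\xi$. By Leibniz, the resulting integrand is a sum of products involving $|\nabla_\xi^s M_k^{rem}|=O(|\xi|^{2-s}+1)$, $|\nabla_\xi^s\lambda_1|=O(|\xi|^{3-s})$, and compensations $(x_3|\xi|^3)^m e^{-cx_3|\xi|^3}\lesssim 1$; this last inequality trades each factor of $x_3$ generated by differentiating the exponential for $|\xi|^{-3}$ and makes the $x_3$-dependence disappear. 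Careful counting of exponents shows the worst singularity at $\xi=0$ is of order $|\xi|^q\cdot|\xi|^{2-(3+q)}=|\xi|^{-1}$, which is integrable on $\R^2$. The $\lambda_{2,3}$ contributions to $\psi_1,\psi_2$ cause no trouble at all: on $\supp\phi$ one has $\Re\lambda_{2,3}\geq c>0$, so they decay exponentially in $x_3$.

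The hard part is the $|x_3|^{-\alpha-q/3}$ decay for the $\lambda_1$-piece of $\psi_1,\psi_2$, where $\lambda_1\sim|\xi|^3$ degenerates at the origin. I would handle it via the anisotropic rescaling $\eta=x_3^{1/3}\xi$: the oscillatory integral then concentrates on $|\eta|\lesssim 1$, contributes a Jacobian $x_3^{-2/3}$, and, using $M_1^{rem}(\xi)=O(|\xi|)$, $N_1^{rem}(\xi)=O(|\xi|)$ near the origin, the factor $|\xi|^q M_1^{rem}(\xi)=O(|\xi|^{q+1})$ contributes $x_3^{-(q+1)/3}$. Altogether
\[
|\na^q\psi_i(x_h,x_3)|\lesssim x_3^{-2/3}\cdot x_3^{-q/3}\cdot x_3^{-1/3}=x_3^{-1-q/3},
\]
which is strictly stronger than the claimed $x_3^{-\alpha-q/3}$ for any $\alpha<2/3$. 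The restriction $\alpha<2/3$ in the statement is therefore not sharp and merely leaves a safety margin in the exponent; the rescaling argument in fact yields the exponent $1$.
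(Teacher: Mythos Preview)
Your overall strategy matches the paper's: derive separate bounds in $|x_h|$ (by differentiating in $\xi$) and in $|x_3|$ (by exploiting the exponential decay), then combine. The treatment of $\varphi_{HF}$ is correct and essentially identical to the paper's. Your rescaling $\eta=x_3^{1/3}\xi$ for the $x_3$-decay of the $\lambda_1$-piece of $\psi_1,\psi_2$ is a clean variant of the paper's argument (which instead uses $x_3^{\beta}e^{-\Re\lambda_1 x_3}\lesssim |\Re\lambda_1|^{-\beta}$ and integrates); both yield the stated decay, and you are right that the exponent can be pushed beyond $2/3$.

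There is, however, a genuine gap in your derivation of the $|x_h|^{-(3+q)}$ bound for $\psi_1,\psi_2$. Your ``careful counting'' asserts that the worst singularity at $\xi=0$ is $|\xi|^{-1}$, coming from the term where all $3+q$ $\xi$-derivatives hit $M_1^{rem}$. But your own pointwise trade $(x_3|\xi|^3)^m e^{-c|\xi|^3 x_3}\lesssim 1$ yields $\nabla_\xi^e e^{-\lambda_1 x_3}=O(|\xi|^{-e})$ uniformly in $x_3$; combined with the bound $|\nabla_\xi^d M_1^{rem}|=O(|\xi|^{2-d}+1)$ that you correctly quote, the Leibniz terms with $d\in\{0,1\}$ give (using the ``$+1$'' part, or $M_1^{rem}=O(|\xi|)$ for $d=0$)
\[
|\xi|^{q}\cdot O(|\xi|)\cdot O(|\xi|^{-(3+q)})=O(|\xi|^{-2}),\qquad
|\xi|^{q}\cdot O(1)\cdot O(|\xi|^{-(2+q)})=O(|\xi|^{-2}),
\]
which is \emph{not} integrable on $\R^2$. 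So the pointwise trade followed by integration fails.

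The fix is simple but essential: do not trade pointwise. Keep the factor $e^{-c|\xi|^3 x_3}$ and bound the $L^1_\xi$-norm of each Leibniz term directly. A generic term has the form $x_3^{p}\,|\xi|^{\gamma}\,e^{-c|\xi|^3 x_3}\mathbf 1_{|\xi|\le R}$ with $\gamma>-2$; the same rescaling $\eta=x_3^{1/3}\xi$ you use for the $x_3$-bound gives
\[
\int_{|\xi|\le R} x_3^{p}|\xi|^{\gamma}e^{-c|\xi|^3 x_3}\,d\xi
= x_3^{\,p-(\gamma+2)/3}\int_{|\eta|\le R x_3^{1/3}}|\eta|^{\gamma}e^{-c|\eta|^3}\,d\eta,
\]
which is $O(1)$ uniformly in $x_3>0$ precisely when $p\le(\gamma+2)/3$. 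A direct check shows that every Leibniz term satisfies this (with equality in the borderline cases $d=0,1$), so $\|\nabla_\xi^{3+q}\widehat{\nabla^q\psi_1}(\cdot,x_3)\|_{L^1}\le C$ uniformly, and the $|x_h|^{-(3+q)}$ bound follows. The paper's proof is also terse at this point, but this integrated estimate is what makes it work.
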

\begin{proof}

$\bullet$ Let us first derive the estimate on $\varphi_{HF}$ for $q=0$. We seek to prove that there exists $c_0>0$ such that
\be\label{est:varphi}\forall (\alpha, \beta)\in (c_0,\infty)^2,\ \exists C_{\alpha,\beta},\ |\varphi_{HF}(x_h,x_3)|\leq \frac{C_{\alpha,\beta}}{|x_h|^\alpha + |x_3|^\beta}.\ee
To that end, it is enough to show that for $\alpha\in \N^2$ and $\beta>0$ with $|\alpha|,\beta\geq c_0$,

$$
\sup_{x_3>0}\left(|x_3|^\beta\|\widehat{\varphi_{HF}}(\cdot, x_3)\|_{L^1(\R^2)} + \|\na^{\alpha}_\xi \widehat{\varphi_{HF}}(\cdot, x_3)\|_{L^1(\R^2)}\right)<\infty.
$$
We recall that $\lambda_k(\xi)\sim |\xi|$ for $|\xi|\to \infty$. Moreover, using the estimates of Lemma \ref{lem:dev-lambda-A}, we infer that there exists $\gamma\in \R$ such that $L_k(\xi)=O(|\xi|^\gamma)$ for $|\xi|\gg 1$. Hence
\begin{eqnarray*}
|x_3|^\beta|\hat \varphi_{HF}(\xi,x_3)|&\leq & C |1-\phi(\xi)| \:|\xi|^\gamma \sum_{k=1}^3|x_3|^\beta\exp(-\Re(\lambda_k) x_3)\\
&\leq &  C |1-\phi(\xi)| \:|\xi|^{\gamma-\beta} \sum_{k=1}^3|\Re(\lambda_k)x_3|^\beta\exp(-\Re(\lambda_k) x_3)\\
&\leq &C_\beta   \:|\xi|^{\gamma-\beta}\mathbf 1_{|\xi|\geq 1}.
\end{eqnarray*}
Hence for $\beta$ large enough, for all $x_3>0$,
$$
|x_3|^\beta\|\hat \varphi_{HF}(\cdot,x_3)\|_{L^1(\R^2)}\leq C_\beta.
$$
In a similar fashion, for $\alpha\in \N^2$, $|\alpha|\geq 1$, we have, as $|\xi|\to \infty$ (see  Appendix \ref{appendixrestes})
$$
\begin{aligned}
\na^\alpha L_k(\xi)&=O\left(|\xi|^{\gamma-|\alpha|}\right),\\
\na^\alpha\left(\exp(-\lambda_k x_3)\right)&=O\left((|\xi|^{1-|\alpha|}x_3 + |x_3|^{|\alpha|})\exp(-\Re(\lambda_k) x_3)\right)=O\left(|\xi|^{-|\alpha|}\right).
\end{aligned}
$$
Moreover, we recall that $\na(1-\phi)$ is supported in a ring of the type $B_R\setminus B_1$ for some $R>1$. As a consequence, we obtain, for all $\alpha\in \N^2$ with $|\alpha|\geq 1$,
$$
\left|\na^\alpha \widehat{ \varphi_{HF}}(\xi,x_3)\right|\leq C_\alpha |\xi|^{\gamma-|\alpha|}\mathbf 1_{|\xi|\geq 1},
$$
so that
$$\|\na^\alpha \widehat{ \varphi_{HF}}(\cdot,x_3)\|_{L^1(\R^2)}\leq C_\alpha.$$
Thus $\varphi_{HF}$ satisfies \eqref{est:varphi} for $q=0$. For $q\geq 1$, the proof is the same, changing $L_k$ into $|\xi|^{q_1}|\lambda_k|^{q_2}L_k$ with $q_1+q_2=q$.

$\bullet$ The estimates on $\psi_1,\psi_2$ are similar. The main difference lies in the degeneracy of $\lambda_1$ near zero. For instance, in order to derive an $L^\infty$ bound on $|x_3|^{\alpha + q/3}\na^q \psi_1$, we look for an $L^\infty_{x_3}(L^1_\xi(\R^2))$ bound on $|x_3|^{\alpha + q/3} |\xi|^q \hat\psi_1(\xi,x_3)$. We have
\begin{eqnarray*}
&&\left| |x_3|^{\alpha + q/3} |\xi|^q \phi(\xi )\sum_{k=1}^3M_k^{rem}\exp(-\lambda_k x_3)\right|\\
&\leq & C|x_3|^{\alpha + q/3} |\xi|^{q} \sum_{k=1}^3\exp(-\Re(\lambda_k) x_3) |M_{k}^{rem}|\mathbf{1_{|\xi|\leq R}}\\
&\leq & C|\xi|^{q} \sum_{k=1}^3 |\Re \lambda_k|^{-(\alpha+q/3)}|M_{k}^{rem}|\mathbf{1_{|\xi|\leq R}}\\
&\leq & C|\xi|^q (|\xi|^{1-3\alpha-q}+1)\mathbf{1_{|\xi|\leq R}}.
\end{eqnarray*}
The right-hand side is in $L^1$ provided $\alpha<2/3$. We infer that
$$
\left|\ |x_3|^{\alpha + q/3}\na^q \psi_1(x)\right| \leq C_{\alpha,q}\quad \forall x\ \forall \alpha\in (0,2/3).
$$
The other bound on $\psi_1$ is derived in  a similar way, using the fact that
$$
\na_\xi^q M_1^{rem}=O(|\xi|^{2-q}+1)
$$
for $\xi$ in a neighbourhood of zero.
\end{proof}

$\rhd$ We are now ready to state our representation formula:

\begin{proposition}[Representation formula]

Let $v_0\in H^{1/2}(\R^2)^3$ such that $v_{0,3}=\na_h\cdot V_h$ for some $V_h\in H^{1/2}(\R^2)$, and let $u$ be the solution of \eqref{SC-R3+}.
For all $x\in \R^3$, let $\chi\in \mathcal C^\infty_0(\R^2)$ such that $\chi\equiv 1$ on $B(x_h,1)$. Let $\phi\in\mathcal C^\infty_0(\R^2)$ be a cut-off function as in Lemma \ref{lem:kernels}, and let $\varphi_{HF}$, $\psi_1$, $\psi_2$ be the associated kernels. For $k=1,2,3$, set
$$
f_k(\cdot,x_3):=\mathcal F^{-1}\left( \phi(\xi) \exp(-\lambda_k x_3)\right).
$$

Then
\begin{eqnarray*}
u(x)&=&\mathcal{F}^{-1}\left(\sum_{k=1}^3 L_k(\xi) \begin{pmatrix}
\widehat{\chi v_{0,h}}(\xi)\\ \widehat{\na \cdot(\chi V_h)}
\end{pmatrix}
\exp(-\lambda_k x_3)\right)(x)\\
&+& \sum_{k=1}^3\mathcal{ I}[M_k^1]f_k(\cdot,x_3)\ast((1-\chi) v_{0,h})(x)\\
&+& \sum_{k=1}^3\mathcal{ I}[N_k^1]f_k(\cdot,x_3)\ast((1-\chi) V_{h})(x)\\
&+& \varphi_{HF}\ast \begin{pmatrix}
{(1-\chi) v_{0,h}}\\ {\na\cdot((1-\chi) V_h)}
\end{pmatrix}(x)\\
&+& \psi_1\ast ((1-\chi) v_{0,h})(x) + \psi_2\ast ((1-\chi) V_h)(x) 
\end{eqnarray*}
As a consequence,  for all $a>0$, there exists a constant $C_a$ such that
$$
\sup_{k\in \Z^2} \int_{k+[0,1]^2}\int_0^a|u(x_h,x_3)|^2dx_3\:dx_h\leq C_a \left(\|v_0\|^2_{H^{1/2}_{uloc}(\R^2)} + \|V_h\|^2_{H^{1/2}_{uloc}(\R^2)}\right).
$$
Moreover, there exists $q\in \N$ such that 
$$
\sup_{k\in \Z^2} \int_{k+[0,1]^2}\int_1^\infty |\na^q u(x_h,x_3)|^2dx_3\:dx_h\leq C\left(\|v_0\|^2_{H^{1/2}_{uloc}(\R^2)} + \|V_h\|^2_{H^{1/2}_{uloc}(\R^2)}\right).
$$

\label{prop:est-SC-uloc}
\end{proposition}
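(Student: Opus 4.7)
The plan is to start from the Fourier representation in Proposition~\ref{prop:ex/uni-SC-H12}, $\hat u(\xi, x_3) = \sum_{k=1}^3 L_k(\xi)\hat v_0(\xi)\exp(-\lambda_k(\xi) x_3)$, and to rewrite it using $v_{0,3} = \na_h\cdot V_h$ as $L_k\hat v_0 = M_k \hat v_{0,h} + N_k \hat V_h$ with the matrices $M_k, N_k$ introduced before Lemma~\ref{lem:kernels}. We split $v_{0,h} = \chi v_{0,h} + (1-\chi)v_{0,h}$ and $V_h = \chi V_h + (1-\chi)V_h$: the near piece is the inverse Fourier transform of $L_k (\widehat{\chi v_{0,h}}, \widehat{\na\cdot(\chi V_h)})^t\exp(-\lambda_k x_3)$, where the identity $i\xi\cdot\widehat{\chi V_h} = \widehat{\na\cdot(\chi V_h)}$ ensures the compatibility condition \eqref{hyp:v03} is preserved for the localized datum. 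For the far piece we insert the cut-off $1 = \phi(\xi) + (1-\phi(\xi))$: the high-frequency part $(1-\phi)L_k$ produces the $\varphi_{HF}$ convolution against $((1-\chi)v_{0,h}, \na\cdot((1-\chi)V_h))$, while the low-frequency part $\phi L_k$ is split further via $M_k = M_k^1 + M_k^{rem}$ and $N_k = N_k^1 + N_k^{rem}$, the remainders giving the $\psi_1\ast((1-\chi)v_{0,h})$ and $\psi_2\ast((1-\chi)V_h)$ terms.

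To handle the singular contributions $\phi M_k^1$ and $\phi N_k^1$, whose entries are linear combinations of $\xi_i\xi_j/|\xi|$ multiplied by $\phi$, we invoke Lemma~\ref{lem:noyau-ordre1} with $\zeta(\xi) = \phi(\xi)\exp(-\lambda_k(\xi) x_3)$, so that $\rho = f_k(\cdot, x_3)$. For a Schwartz test function $g$ the Fourier-multiplier action equals $\mathcal I[\xi_i\xi_j/|\xi|](f_k(\cdot, x_3)\ast g)$; linearity in the entries of $M_k^1$ (resp.\ $N_k^1$) and translation invariance of $\mathcal I[\cdot]$, which lets it commute with convolution, then convert this into $\mathcal I[M_k^1]f_k(\cdot, x_3)\ast((1-\chi)v_{0,h})$ and $\mathcal I[N_k^1] f_k(\cdot, x_3)\ast((1-\chi)V_h)$, recovering the last two lines of the formula. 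The extension from Schwartz to $L^2_{uloc}$ boundary data is justified by density once the right-hand side is shown to make sense, which follows from the estimates of the next step.

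For the $L^2$-bound on $(l+[0,1]^2)\times(0,a)$, we fix $l\in\Z^2$ and pick $\chi\in\mathcal C^\infty_0(\R^2)$ with $\chi\equiv 1$ on $B(l, 3)$ and $\supp\chi\subset B(l, 4)$; then $\|\chi v_{0,h}\|_{H^{1/2}}+\|\chi V_h\|_{H^{1/2}}\leq C(\|v_0\|_{H^{1/2}_{uloc}}+\|V_h\|_{H^{1/2}_{uloc}})$. Proposition~\ref{prop:ex/uni-SC-H12} yields an $L^2(\R^3_+)$-gradient bound for the near piece, which Poincar\'e's inequality on $(0, a)$ turns into the desired $L^2$-bound. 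Since $(1-\chi)$ vanishes on $B(l, 3)$, for $x\in (l+[0,1]^2)\times(0,a)$ the convolutions against $\varphi_{HF}, \psi_1, \psi_2$ are controlled pointwise by combining the decay estimates of Lemma~\ref{lem:kernels} with the $L^2_{uloc}$-convolution argument of Remark~\ref{rem:convol}. The singular-integral pieces are bounded via Lemma~\ref{lemauxorder1} applied with $\varphi = f_k(\cdot, x_3)\ast((1-\chi)v_{0,h})$ (resp.\ $(1-\chi)V_h$); the $L^\infty$ bounds on $\varphi$ and $\na^2\varphi$ come from Remark~\ref{rem:convol}, since the compact support and smoothness of $\phi$ make the second-moment $L^2$-norms of $f_k(\cdot, x_3)$ and $\na^2 f_k(\cdot, x_3)$ uniformly finite for $x_3\in(0, a)$.

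For the higher-derivative bound on $(l+[0,1]^2)\times(1,\infty)$ we differentiate the representation $q$ times. The kernels $\varphi_{HF}, \psi_1, \psi_2$ admit the arbitrary-order decay estimates of Lemma~\ref{lem:kernels}, producing after convolution with $L^2_{uloc}$ data pointwise bounds decaying in $x_3$ at rates $|x_3|^{-\beta}$ and $|x_3|^{-\alpha - q/3}$ respectively, which are square-integrable over $(1, \infty)$ once $q$ is large enough. For the singular-integral pieces, $\na^q$ with $q\geq 1$ converts the Fourier multiplier $\phi(\xi)M_k^1(\xi)\exp(-\lambda_k x_3)$ into $\phi(\xi)M_k^1(\xi)\xi^{q_1}\lambda_k^{q_2}\exp(-\lambda_k x_3)$ with $q_1+q_2 = q$; the extra polynomial factors absorb the $|\xi|^{-1}$ singularity of $M_k^1$ at the origin, so the corresponding kernels enjoy estimates of the $\psi_1, \psi_2$ type from Lemma~\ref{lem:kernels}. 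Choosing $q\geq 4$ suffices. The most delicate step throughout is the tension between the low-frequency singularity $|\xi|^{-1}$ of $M_k^1, N_k^1$ and the degeneracy $\lambda_1 \sim |\xi|^3$ near zero: this forces the use of Lemma~\ref{lem:noyau-ordre1} and the non-standard $\mathcal I[\cdot]$ representation, and is also the reason why the $L^2$-integrability of $\na u$ (rather than $\na^q u$ for large $q$) over $(1, \infty)$ cannot be obtained by the same method.
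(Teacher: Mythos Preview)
Your proposal is correct and follows essentially the same approach as the paper: split $v_0$ via $\chi$, handle the near piece by Proposition~\ref{prop:ex/uni-SC-H12}, split the far piece via $\phi$ into the high-frequency convolution $\varphi_{HF}$ and the low-frequency part, then decompose $M_k = M_k^1 + M_k^{rem}$, $N_k = N_k^1 + N_k^{rem}$ to isolate the $\psi_1,\psi_2$ convolutions and the $\mathcal I[\cdot]$ terms, and finally bound each piece using Lemma~\ref{lem:kernels}, Lemma~\ref{lemauxorder1}, and Remark~\ref{rem:convol}.

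One minor difference in emphasis: for the $\na^q$ bound on the singular-integral pieces, the paper keeps the $\mathcal I[M_k^1]$ structure intact and instead shows that the differentiated kernel $f_k$ satisfies $\int_1^\infty \||\xi|^q \na_\xi^2 \exp(-\lambda_1 x_3)\|_{L^2(B_R)}^2\,dx_3 < \infty$ for $q\geq 4$, which is what feeds into Lemma~\ref{lemauxorder1}. Your alternative---absorbing the factors $\xi^{q_1}\lambda_k^{q_2}$ into the multiplier so that $M_k^1\xi^{q_1}\lambda_k^{q_2}$ gains enough vanishing at $\xi=0$ to be treated as a $\psi$-type remainder---also works, but note that for $k=2,3$ with $q_1=0$ the factor $\lambda_k^{q_2}=O(1)$ does not add vanishing; this is harmless because for $k=2,3$ the exponential decay of $\exp(-\lambda_k x_3)$ already makes the $\mathcal I$ bound integrable on $(1,\infty)$ without any derivatives. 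Only $k=1$ genuinely needs the extra powers, and there $\lambda_1^{q_2}\sim |\xi|^{3q_2}$ indeed provides them.
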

\begin{remark}
The integer $q$ in the above proposition is explicit and does not depend on $v_0$. One can take $q=4$ for instance.
\end{remark}

\begin{proof}
The proposition follows quite easily from the preceding lemmas. We have, according to Proposition \ref{prop:ex/uni-SC-H12}, 
\begin{eqnarray*}
u(x)&=&\mathcal{F}^{-1}\left(\sum_{k=1}^3 L_k(\xi) \begin{pmatrix}
\widehat{\chi v_{0,h}}(\xi)\\ \widehat{\na \cdot(\chi V_h)}(\xi)
\end{pmatrix}\exp(-\lambda_k x_3)\right)(x)\\
&+&\mathcal{F}^{-1}\left(\sum_{k=1}^3 L_k(\xi) \begin{pmatrix}
\widehat{(1-\chi) v_{0,h}}(\xi)\\ \widehat{\na \cdot((1-\chi) V_h)}(\xi)
\end{pmatrix}\exp(-\lambda_k x_3)\right)(x).
\end{eqnarray*}
In the latter term, the cut-off function $\phi$ is introduced, writing simply $1=1-\phi+\phi$. We have, for the high-frequency term,
\begin{eqnarray*}
&&\mathcal{F}^{-1}\left(\sum_{k=1}^3(1-\phi(\xi)) L_k(\xi) \begin{pmatrix}
\widehat{(1-\chi) v_{0,h}}(\xi)\\ \widehat{\na \cdot((1-\chi) V_h)}(\xi)
\end{pmatrix}\exp(-\lambda_k x_3)\right)\\
&=&\mathcal{F}^{-1}\left(\hat \varphi_{HF}(\xi,x_3)\begin{pmatrix}
\widehat{(1-\chi) v_{0,h}}(\xi)\\ \widehat{\na \cdot((1-\chi) V_h)}(\xi)
\end{pmatrix}\right)=\varphi_{HF}(\cdot,x_3)\ast\begin{pmatrix}
{(1-\chi) v_{0,h}}(\xi)\\ {\na \cdot((1-\chi) V_h)}(\xi)
\end{pmatrix}
\end{eqnarray*}
Notice that ${\na_h \cdot((1-\chi) V_h)}=(1-\chi) v_{0,3} -\na_h \chi\cdot V_h\in H^{1/2}(\R^2)$.    

In the low frequency terms, we distinguish between the horizontal and the vertical components of $v_0$. Let us deal with the vertical component, which is slightly more complicated: since $v_{0,3}=\na_h\cdot V_h$, we have
\begin{eqnarray*}
&&\mathcal{F}^{-1}\left(\sum_{k=1}^3\phi(\xi) L_k(\xi)e_3 \widehat{\na_h \cdot((1-\chi) V_h)}(\xi)\exp(-\lambda_k x_3)\right)\\
&=&\mathcal{F}^{-1}\left(\sum_{k=1}^3\phi(\xi) L_k(\xi)e_3 i\xi\cdot \widehat{(1-\chi)  V_h}(\xi)\exp(-\lambda_k x_3)\right).
\end{eqnarray*}
We recall that $N_k=iL_ke_3 \prescript{t}{}{\xi}$, so that
$$
L_k(\xi)e_3 i\xi\cdot \widehat{(1-\chi)  V_h}(\xi)=N_k(\xi)\widehat{(1-\chi)  V_h}(\xi).
$$
Then, by definition of $\psi_2$ and $f_k$,
\begin{eqnarray*}
&&\mathcal{F}^{-1}\left(\sum_{k=1}^3\phi(\xi) N_k(\xi)\widehat{(1-\chi)  V_h}(\xi)\exp(-\lambda_k x_3)\right)\\
&=&\mathcal{F}^{-1}\left(\sum_{k=1}^3\phi(\xi) N_k^1(\xi)\widehat{(1-\chi)  V_h}(\xi)\exp(-\lambda_k x_3)\right)\\
&&+\mathcal{F}^{-1}\left(\sum_{k=1}^3\phi(\xi) N_k^{rem}(\xi)\widehat{(1-\chi)  V_h}(\xi)\exp(-\lambda_k x_3)\right)\\
&=&\sum_{k=1}^3\mathcal I\left[N_k^1\right]f_k\ast((1-\chi) \cdot V_h)+\mathcal{F}^{-1}\left(\hat \psi_2(\xi,x_3)\widehat{(1-\chi) \cdot V_h}(\xi)\right)\\
&=&\sum_{k=1}^3\mathcal I\left[N_k^1\right]f_k\ast((1-\chi) \cdot V_h)+\psi_2\ast ((1-\chi) \cdot V_h).
\end{eqnarray*}
The representation formula follows.

There remains to bound every term occurring in the representation formula. In order to derive bounds on $(l+[0,1]^2)\times\R_+$ for some $l\in \Z^2$, we use the representation formula with a function $\chi_l\in \mathcal C^\infty_0(\R^2)$ such that $\chi_l\equiv 1$ on $l+[-1,2]^2$, and  we assume that the derivatives of $\chi_l$ are bounded uniformly in $l$ (take for instance $\chi_l=\chi(\cdot + l) $ for some $\chi\in \mathcal C^\infty_0$).
\begin{itemize}
\item According to Proposition \ref{prop:ex/uni-SC-H12}, we have
\begin{eqnarray*}
&&\int_0^a\left\| \mathcal{F}^{-1}\left(\sum_{k=1}^3 L_k(\xi) \begin{pmatrix}
\widehat{\chi_l v_{0,h}}(\xi)\\ \widehat{\na \cdot(\chi_l V_h)}
\end{pmatrix}
\exp(-\lambda_k x_3)\right)\right\|_{L^2(\R^2)}^2 dx_3\\
&\leq &C_a\left(\|\chi_l v_{0,h}\|_{H^{1/2}}^2 +\|\na\chi_l \cdot V_h\|_{H^{1/2}}^2  + \|\chi_l v_{0,3}\|^2_{H^{1/2}(\R^2)}\right).
\end{eqnarray*}
Using the formula
$$
\|f\|_{H^{1/2}(\R^2)}^2=\|f\|_{L^2}^2 + \int_{\R^2\times \R^2}\frac{|f(x)-f(y)|^2}{|x-y|^3}\:dx\:dy\quad \forall f\in H^{1/2}(\R^2),
$$
it can be easily proved that 
\be\label{est:H12-tronque}
\|\chi u\|_{H^{1/2}(\R^2)}\leq C \|\chi\|_{W^{1,\infty}} \|u\|_{H^{1/2}(\R^2)}
\ee
for all $\chi \in W^{1,\infty}(\R^2)$ and for all $u\in H^{1/2}(\R^2)$, where the constant $C$ only depends on the dimension. Therefore
\begin{eqnarray*}
\|\chi_l v_{0,h}\|_{H^{1/2}}&\leq & \sum_{k\in \Z^2}\|\chi_l \tau_k \vartheta v_{0,h}\|_{H^{1/2}}\\
&\leq & \sum_{k\in \Z^2, |k-l|\leq 1 + 3\sqrt{2}}\|\chi_l \tau_k \vartheta v_{0,h}\|_{H^{1/2}}\\
&\leq & C \|\chi_l\|_{W^{1,\infty}} \|v_{0,h}\|_{H^{1/2}_{uloc}},
\end{eqnarray*}
so that
$$\int_0^a\left\| \mathcal{F}^{-1}\left(\sum_{k=1}^3 L_k(\xi) \begin{pmatrix}
\widehat{\chi_l v_{0,h}}(\xi)\\ \widehat{\na \cdot(\chi_l V_h)}
\end{pmatrix}
\exp(-\lambda_k x_3)\right)\right\|_{L^2(\R^2)}^2 dx_3
\leq  C_a\left(\| v_{0}\|_{H^{1/2}_{uloc}}^2 +\| V_h\|_{H^{1/2}_{uloc}}^2\right).$$

Similarly,
\begin{eqnarray*}
&&\int_0^\infty\left\| \na \mathcal{F}^{-1}\left(\sum_{k=1}^3 L_k(\xi) \begin{pmatrix}
\widehat{\chi_l v_{0,h}}(\xi)\\ \widehat{\na \cdot(\chi_l V_h)}
\end{pmatrix}
\exp(-\lambda_k x_3)\right)\right\|_{L^2(\R^2)}^2 dx_3\\
&\leq & C\left(\| v_{0}\|_{H^{1/2}_{uloc}}^2 +\| V_h\|_{H^{1/2}_{uloc}}^2\right).
\end{eqnarray*}
Moreover, thanks to Remark \ref{rem:decay-classical}, for any $q\geq 2$, 
\begin{eqnarray*}
&&\int_1^\infty\left\| \na^q \mathcal{F}^{-1}\left(\sum_{k=1}^3 L_k(\xi) \begin{pmatrix}
\widehat{\chi_l v_{0,h}}(\xi)\\ \widehat{\na \cdot(\chi_l V_h)}
\end{pmatrix}
\exp(-\lambda_k x_3)\right)\right\|_{L^2(\R^2)}^2 dx_3\\
&\leq & C_q\left(\| v_{0}\|_{H^{1/2}_{uloc}}^2 +\| V_h\|_{H^{1/2}_{uloc}}^2\right).
\end{eqnarray*}

\item We now address the bounds of the terms involving the kernels $\varphi_{HF}, \psi_1,\psi_2$. According to Lemma \ref{lem:kernels}, we have for instance, for all $x_3>0$, for all $x_h\in l+[0,1]^2$, for $\sigma\in \N^2$,
\begin{eqnarray*}
&&\left|\int_{\R^2}\na^\sigma\varphi_{HF}(y_h,x_3) \begin{pmatrix}
{(1-\chi_l) v_{0,h}}\\ {\na\cdot((1-\chi_l) V_h)}
\end{pmatrix}(x_h-y_h)\:dy_h\right|\\ & \leq & C_{\alpha,\beta,|\sigma|}\int_{|y_h|\geq 1}| v_0(x_h-y_h) | \frac{1}{|y_h|^\alpha +x_3^\beta}\:dy_h\\&&+C_{\alpha,\beta,|\sigma|}\int_{1\leq |y_h|\leq 2}| V_h(x_h-y_h) | \frac{1}{|y_h|^\alpha +x_3^\beta}\:dy_h\\
&\leq & C\|V_h\|_{L^2_{uloc}}\frac{1}{1+x_3^\beta}+ C\left(\int_{\R^2}\frac{|v_0(x_h-y_h)|^2}{1+|y_h|^\gamma}dy_h\right)^{1/2}\left(\int_{|y_h|\geq 1}\frac{1+|y_h|^\gamma}{\left(|y_h|^\alpha +x_3^\beta\right)^2}\:dy_h\right)^{1/2}\\
&\leq&C\|V_h\|_{L^2_{uloc}}\frac{1}{1+x_3^\beta} +C\|v_0\|_{L^2_{uloc}}\inf\left(1,x_3^{\beta(\frac{2+\gamma}{2\alpha}-1)}\right)
\end{eqnarray*}
for all $\gamma>2$ and for $\alpha,\ \beta>c_0$ and sufficiently large. In particular the $\dot H^q_{uloc}$ bound follows. The local bounds in $L^2_{uloc}$ near $x_3=0$ are immediate since the right-hand side is uniformly bounded in $x_3$. The treatment of the terms with $\psi_1$, $\psi_2$ are analogous. Notice however that because of the slower decay of $\psi_1$, $\psi_2$ in $x_3$, we only have a uniform bound in $\dot H^q((l+[0,1]^2)\times (1,\infty))$ if $q$ is large enough ($q\geq 2$ is sufficient).

\item There remains to bound the terms involving $\mathcal I [M_k^1], \mathcal I[N_k^1]$, using Lemma \ref{lem:noyau-ordre1} and Remark \ref{rem:convol}. We have for instance, for all $x_3>0$,
\begin{eqnarray*}
&&\left\| \mathcal I[N_k^1] f_k\ast ((1-\chi_l) V_h)\right\|_{L^2(l+[0,1]^2)}\\
&\leq & C \|V_h\|_{L^2_{uloc}} \left( \|(1+|\cdot|^2)f_k(\cdot,x_3)\|_{L^2(\R^2)} + \|(1+|\cdot|^2)\na_h^2f_k(\cdot,x_3)\|_{L^2(\R^2)} \right).
\end{eqnarray*}
Using the Plancherel formula, we infer
\begin{eqnarray*}
\|(1+|\cdot|^2)f_k(\cdot,x_3)\|_{L^2(\R^2)} &
\leq&C\|\phi(\xi)\exp(-\lambda_k x_3)\|_{H^2(\R^2)}\\
&\leq & C\|\exp(-\lambda_k x_3)\|_{H^2(B_R)} + C\exp(-\mu x_3),
\end{eqnarray*}
where $R>1$ is such that $\Supp \phi\subset B_R$ and $\mu$ is a positive constant depending only on $\phi$. We have, for $k=1,2,3$, 
$$
\left| \na^2\exp(-\lambda_k x_3)\right|\leq C\left(x_3|\na_\xi^2\lambda_k| + x_3^2|\na_\xi \lambda_k|^2\right)\exp(-\lambda_k x_3).
$$
The asymptotic expansions in Lemma \ref{lem:dev-lambda-A} together with the results of Appendix \ref{appendixrestes} imply that for $\xi$ in any neighbourhood of zero,
\begin{eqnarray*}
\na^2 \lambda_1=O(|\xi|),& \na \lambda_1= O(|\xi|^2),\\
\na^2\lambda_k=O(1),& \na \lambda_k= O(|\xi|)\text{ for }k=2,3.&
\end{eqnarray*}
In particular, if $k=2,3$, since $\lambda_k$ is bounded away from zero in a neighbourhood of zero,
$$
\int_0^\infty dx_3\|\exp(-\lambda_k x_3)\|_{H^2(B_R)}^2<\infty.
$$
On the other hand, the degeneracy of $\lambda_1$ near $\xi=0$ prevents us from obtaining the same result. Notice however that
$$
\int_0^a \|\exp(-\lambda_1 x_3)\|_{H^2(B_R)}^2\leq C_a
$$
for all $a>0$, and
$$
\int_0^\infty \||\xi|^q\na^2\exp(-\lambda_1 x_3)\|_{L^2(B_R)}^2<\infty
$$
for $q\in \N$ large enough ($q\geq 4$). Hence the  bound on $\na^q u$ follows.\qedhere
\end{itemize}
\end{proof}

$\rhd$ The representation formula, together with its associated estimates, now allows us to extend the notion of solution to locally integrable boundary data. Before stating the corres\-ponding result, let us prove a technical lemma about some nice properties of 
operators of the type $\mathcal I\left[\frac{\xi_i\xi_j}{|\xi|}\right]$, which we will use repeatedly:
\begin{lemma}
Let $\varphi\in \mathcal C^\infty_0(\R^2)$.
Then, for all $g\in L^2_{uloc}(\R^2)$, for all  $\rho\in \mathcal C^\infty(\R^2)$  such that $\na^\alpha \rho$ has bounded second order moments in $L^2$ for $0\leq \alpha\leq 2$,
$$
\begin{aligned}
\int_{\R^2}\varphi \mathcal I\left[\frac{\xi_i\xi_j}{|\xi|}\right]\rho\ast g  =\int_{\R^2} g\mathcal I\left[\frac{\xi_i\xi_j}{|\xi|}\right]\check\rho\ast  \varphi,\\
\int_{\R^2} \na\varphi\mathcal I\left[\frac{\xi_i\xi_j}{|\xi|}\right]\rho\ast g  =-\int_{\R^2} \varphi\mathcal I\left[\frac{\xi_i\xi_j}{|\xi|}\right]\na\rho\ast g .
\end{aligned}
$$
\label{lem:propriete-I}
\end{lemma}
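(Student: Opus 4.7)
The plan is to establish the first identity via Fourier duality on a Schwartz approximation of $g$, and then to deduce the second from the first together with the chain-rule identity $\partial_k\check\rho = -\widecheck{\partial_k\rho}$.

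For the first identity, I would approximate $g$ by a sequence $g_n\in\mathcal C^\infty_0(\R^2)$ with $g_n\to g$ in $L^2_{\mathrm{loc}}(\R^2)$ and $\sup_n \|g_n\|_{L^2_{uloc}} \leq C\|g\|_{L^2_{uloc}}$; for instance $g_n := (g\ast\eta_{1/n})\chi_n$, with $\eta_{1/n}$ a standard mollifier and $\chi_n$ a smooth cut-off equal to $1$ on $B_n$. For such smooth compactly supported $g_n$, Lemma \ref{lem:noyau-ordre1} identifies $\mathcal I[\xi_i\xi_j/|\xi|](\rho\ast g_n)$ with the honest Fourier multiplier $\mathrm{Op}(\xi_i\xi_j/|\xi|)(\rho\ast g_n)$. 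Since the symbol $\xi_i\xi_j/|\xi|$ is real and even, $\mathrm{Op}(\xi_i\xi_j/|\xi|)$ is formally self-adjoint; and since $\rho$ is real, convolution by $\rho$ has $\check\rho\ast$ as its $L^2$-adjoint. A direct Plancherel computation then yields
\[
\int \varphi\, \mathcal I\bigl[\tfrac{\xi_i\xi_j}{|\xi|}\bigr](\rho\ast g_n)\,dx \;=\; \int g_n\, \mathcal I\bigl[\tfrac{\xi_i\xi_j}{|\xi|}\bigr](\check\rho\ast\varphi)\,dx.
\]

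The passage to the limit $n\to\infty$ is handled separately on the two sides. For the left-hand side, $\varphi$ has compact support, $\mathcal I[\xi_i\xi_j/|\xi|](\rho\ast g_n)$ is uniformly bounded thanks to Lemma \ref{lemauxorder1} and Remark \ref{rem:convol} (applied with an $n$-uniform bound on $\|g_n\|_{L^2_{uloc}}$), and converges pointwise to $\mathcal I[\xi_i\xi_j/|\xi|](\rho\ast g)$ by dominated convergence inside the integral defining $\mathcal I$; Lebesgue's theorem concludes. For the right-hand side, $\check\rho\ast\varphi\in W^{2,\infty}$ inherits polynomial decay from $\rho$ (since $\varphi$ has compact support and $\rho$ has second $L^2$ moments), and a careful use of the $|x|^{-3}$ decay of the kernel $\gamma_{ij}$ combined with the cancellations built into $\mathcal I$ shows that $\mathcal I[\xi_i\xi_j/|\xi|](\check\rho\ast\varphi)$ decays polynomially fast enough at infinity to pair in $L^1$ with $g\in L^2_{uloc}$; dominated convergence again applies.

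For the second identity, apply the first identity with $\varphi$ replaced by $\partial_k\varphi$ to obtain
\[
\int \partial_k\varphi\, \mathcal I\bigl[\tfrac{\xi_i\xi_j}{|\xi|}\bigr](\rho\ast g)\,dx \;=\; \int g\, \mathcal I\bigl[\tfrac{\xi_i\xi_j}{|\xi|}\bigr](\check\rho\ast\partial_k\varphi)\,dx.
\]
Then rewrite $\check\rho\ast\partial_k\varphi = \partial_k(\check\rho\ast\varphi) = -\widecheck{\partial_k\rho}\ast\varphi$, and invoke the first identity once more — this time with $\partial_k\rho$ in place of $\rho$ — to reexpress the right-hand side as $-\int \varphi\, \mathcal I[\xi_i\xi_j/|\xi|](\partial_k\rho\ast g)\,dx$. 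This yields the second identity componentwise in $k$.

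The main obstacle is the quantitative decay of $\mathcal I[\xi_i\xi_j/|\xi|](\check\rho\ast\varphi)$ at infinity needed for passing to the limit on the right-hand side of the first identity: Lemma \ref{lemauxorder1} yields only an $L^\infty$ bound, whereas pairing against $g\in L^2_{uloc}$ as an $L^1$ integral demands polynomial decay at a rate strictly faster than $|x|^{-2}$. Securing this requires a finer analysis balancing the $|x|^{-3}$ decay of the kernel $\gamma_{ij}$ against the tail behavior of $\check\rho\ast\varphi$, itself inherited from the moment hypothesis on $\rho$ and the compact support of $\varphi$.
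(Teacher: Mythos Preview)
Your route to the first identity is different from the paper's and introduces a difficulty that the paper sidesteps entirely. The paper argues by a direct application of Fubini's theorem to the explicit triple integral: writing
\[
\int_{\R^2}\varphi\,\mathcal I\Bigl[\tfrac{\xi_i\xi_j}{|\xi|}\Bigr](\rho\ast g)
= \int_{\R^6} \gamma_{ij}(x-y)\,g(t)\,\varphi(x)\,\bigl\{\rho(x-t)-\rho(y-t)-\nabla\rho(x-t)\cdot(x-y)\mathbf 1_{|x-y|\le 1}\bigr\}\,dx\,dy\,dt,
\]
performing the affine change of variables $y'=x+t-y$ (so that $\gamma_{ij}(x-y)=\gamma_{ij}(y'-t)$), and then integrating first in $x$ to recognise $\varphi\ast\check\rho$ and its gradient. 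No approximation of $g$, no Plancherel, and no limiting argument are needed. In particular, the quantitative decay of $\mathcal I[\xi_i\xi_j/|\xi|](\check\rho\ast\varphi)$ at infinity --- which you correctly single out as the obstacle in your scheme --- is never invoked as a separate ingredient: the Fubini rearrangement treats both sides of the identity at once.

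Your derivation of the second identity from the first, via $\partial_k\check\rho=-\widecheck{\partial_k\rho}$ and a second application of the first identity, is exactly the paper's argument.

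The gap in your proposal is the one you yourself flag: you need $\mathcal I[\xi_i\xi_j/|\xi|](\check\rho\ast\varphi)$ to decay strictly faster than $|x|^{-2}$ in order to pair with $g\in L^2_{uloc}$, but you do not establish this. Under the stated hypotheses (only bounded second $L^2$-moments of $\nabla^\alpha\rho$ for $0\le|\alpha|\le 2$) this is not automatic from Lemma~\ref{lemauxorder1}. The paper's direct Fubini computation shows that the detour through this decay estimate is unnecessary.
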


\begin{remark}
Notice that the second formula merely states that
$$
\na \left(\mathcal I\left[\frac{\xi_i\xi_j}{|\xi|}\right]\rho\ast g \right)=\mathcal I\left[\frac{\xi_i\xi_j}{|\xi|}\right]\na\rho\ast g 
$$
in the sense of distributions.
\end{remark}

\begin{proof}
$\bullet$ The first formula is a consequence of Fubini's theorem: indeed,
\begin{eqnarray*}
&&\int_{\R^2}\varphi\mathcal{I}\left[\frac{\xi_i\xi_j}{|\xi|}\right] \rho\ast g \\
&=&\int_{\R^6}dx\:dy\:dt\:\gamma_{ij}(x-y)g(t)\varphi(x)\times\\
&&\hskip3.5cm\times\left\{\rho(x-t)- \rho(y-t)- \na \rho(x-t)\cdot(x-y)\mathbf{1_{|x-y|\leq 1}}\right\}\\
&\underset{y'=x+t-y}{=}&\int_{\R^6}dx\:dy'\:dt \gamma_{ij}(y'-t) g (t)\varphi(x)\times\\
&&\hskip3.5cm\times\left\{\rho(x-t)- \rho(x-y')- \na \rho(x-t)\cdot(y'-t)\mathbf{1_{|y'-t|\leq 1}}\right\}.
\end{eqnarray*}
Integrating with respect to $x$, we obtain
\begin{eqnarray*}
&&\int_{\R^2}\varphi\mathcal{I}\left[\frac{\xi_i\xi_j}{|\xi|}\right] \rho\ast g \\
&=&\int_{\R^4}dy'\:dt\: \gamma_{ij}(y'-t)g(t)\left\{ \varphi \ast \check \rho (t) - \varphi \ast \check \rho (y')- \varphi \ast \na \check \rho(t)\cdot(t-y')\mathbf{1_{|y'-t|\leq 1}}\right\}\\
&=&\int_{\R^2}dt g(t)\mathcal{I}\left[\frac{\xi_i\xi_j}{|\xi|}\right] \varphi \ast \check{\rho}.
\end{eqnarray*}

$\bullet$ The second formula is then easily deduced from the first one: using the fact that $\na \check{\rho}(x)=-\na \rho(-x)=-\widecheck{\na\rho}(x),$ we infer
\begin{eqnarray*}
\int_{\R^2}\na\varphi  \mathcal I\left[\frac{\xi_i\xi_j}{|\xi|}\right]\rho\ast g 
&=&\int_{\R^2}g\mathcal I\left[\frac{\xi_i\xi_j}{|\xi|}\right] \check \rho \ast \na \varphi\\
&=&\int_{\R^2}g \mathcal I\left[\frac{\xi_i\xi_j}{|\xi|}\right] \na\check \rho \ast  \varphi\\
&=&-\int_{\R^2}g \mathcal I\left[\frac{\xi_i\xi_j}{|\xi|}\right] \widecheck{\na\rho} \ast  \varphi\\
&=&-\int_{\R^2}\varphi \mathcal I\left[\frac{\xi_i\xi_j}{|\xi|}\right]\na\rho\ast g .
\end{eqnarray*}
\end{proof}

We are now ready to state the main result of this section:
\begin{corollary}
Let $v_0\in \bK$ (recall that $\bK$ is defined in \eqref{def:K}.) Then there exists a unique solution $u$ of \eqref{SC-R3+} such that $u|_{x_3=0}=v_0$ and
\be\label{est:SC-existence}
\begin{aligned}
\forall a>0,&\quad \sup_{k\in \Z^2} \int_{k+[0,1]^2}\int_0^a|u(x_h,x_3)|^2dx_3\:dx_h<\infty,\\
\exists q \in \N^*,&\quad \sup_{k\in \Z^2} \int_{k+[0,1]^2}\int_1^\infty |\na^q u(x_h,x_3)|^2dx_3\:dx_h<\infty.
\end{aligned}
\ee

\label{cor:ex/uni-SC-uloc}
\end{corollary}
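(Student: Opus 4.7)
For $v_0=(v_{0,h},\na_h\cdot V_h)\in\bK$, I would define $u$ pointwise on $\R^3_+$ through the five-term formula of Proposition \ref{prop:est-SC-uloc}: for each $x\in\R^3_+$, pick $\chi=\chi_{x_h}\in\mathcal C^\infty_0(\R^2)$ with $\chi\equiv 1$ on $B(x_h,1)$. The first term falls under Proposition \ref{prop:ex/uni-SC-H12} since $\chi v_{0,h}$ and $\chi V_h$ belong to $H^{1/2}(\R^2)$ by \eqref{est:H12-tronque}; the convolutions against $\varphi_{HF},\psi_1,\psi_2$ applied to $(1-\chi)v_{0,h},(1-\chi)V_h\in L^2_{uloc}$ are well-defined pointwise thanks to the kernel decay of Lemma \ref{lem:kernels}; and the singular-integral terms involving $\mathcal I[M_k^1],\mathcal I[N_k^1]$ are controlled via Lemma \ref{lemauxorder1} and Remark \ref{rem:convol}. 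The uniform local bounds \eqref{est:SC-existence} then follow from the estimates already obtained in Proposition \ref{prop:est-SC-uloc}, applied cell by cell with cut-offs $\chi_l\in\mathcal C^\infty_0(\R^2)$ equal to $1$ on $l+[-1,2]^2$.

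\textbf{Well-posedness of the construction.} I still need to check that the formula is independent of $\chi$, that $u$ solves \eqref{SC-R3+} in $\R^3_+$, and that its trace at $x_3=0$ is $v_0$. I would do this by approximation: set $V_h^n:=\eta_n V_h$, $v_{0,h}^n:=\eta_n v_{0,h}$, with $\eta_n\in\mathcal C^\infty_0(\R^2)$ equal to $1$ on $B_n$, and define $v_0^n:=(v_{0,h}^n,\na_h\cdot V_h^n)\in H^{1/2}(\R^2)^3$ with divergence-form third component. By Proposition \ref{prop:est-SC-uloc}, the representation formula applied to $v_0^n$ is independent of the auxiliary $\chi$ and defines a genuine $H^1$-solution $u^n$ of \eqref{SC-R3+} with trace $v_0^n$. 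The continuity of each term of the formula in the data (from the same arguments proving the bounds in Proposition \ref{prop:est-SC-uloc}, applied cell by cell to the difference $v_0-v_0^n$) yields $u^n\to u$ locally in $L^2$, transferring the PDE, the boundary trace, and the independence of $\chi$ to the limit.

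\textbf{Uniqueness.} The main obstacle is uniqueness. Let $w$ solve \eqref{SC-R3+} with $w|_{x_3=0}=0$ and satisfy \eqref{est:SC-existence}; I must show $w=0$. The bounds \eqref{est:SC-existence} only provide $L^2_{uloc}$ control on high horizontal derivatives of $w$ for $x_3\ge 1$, so the Fourier uniqueness argument of Proposition \ref{prop:ex/uni-SC-H12} (which needs $\na w\in L^2(\R^3_+)$) does not apply directly. The plan is to apply sufficiently many horizontal derivatives, $\tilde w:=\na_h^N w$ with $|N|$ large, in order to absorb the low-frequency degeneracy of the Stokes-Coriolis Green function exhibited in Lemma \ref{lem:dev-lambda-A}; combined with a Ladyzhenskaya--Solonnikov horizontal truncation in the strip $\{0<x_3<1\}$ (using the zero trace and the Poincaré inequality in $x_3$ to control $\|w\|_{L^2}$ by $\|\na w\|_{L^2}$) this would yield $\na\tilde w\in L^2(\R^3_+)$, hence $\tilde w=0$ by Proposition \ref{prop:ex/uni-SC-H12}. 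The uniform $L^2_{uloc}$ bound on $w$ near $x_3=0$ then precludes any non-trivial polynomial primitive in $x_h$, forcing $w\equiv 0$. The chief difficulty here is precisely the low-frequency structure of the Stokes-Coriolis operator, which is what necessitates the compatibility constraint built into the definition of $\bK$.
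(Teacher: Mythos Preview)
Your existence construction is a valid alternative to the paper's, though the paper proceeds more directly. Rather than approximating $v_0$ by compactly supported data, it checks independence of $\chi$ by hand: for $\chi_1,\chi_2$ both equal to $1$ on $B(x_h,1)$, the function $\chi_1-\chi_2$ has compact support and vanishes near $x_h$, which lets one reassemble the singular-integral and remainder pieces into the Fourier expression $\mathcal F^{-1}\bigl(\sum_k L_k\,\widehat{(\chi_1-\chi_2)v_0}\,e^{-\lambda_k x_3}\bigr)$, and all terms cancel. To verify that $u$ solves \eqref{SC-R3+}, the paper derives a duality formula for every $\eta\in\mathcal C^\infty_0(\R^2)^3$,
\[
\int_{\R^2} u(\cdot,x_3)\cdot\eta
=\int_{\R^2}v_{0,h}\cdot\mathcal F^{-1}\Bigl(\sum_k(\overline{\prescript{t}{}{L_k}}\hat\eta)_h\,e^{-\bar\lambda_k x_3}\Bigr)
-\int_{\R^2}V_h\cdot\mathcal F^{-1}\Bigl(\sum_k i\xi(\overline{\prescript{t}{}{L_k}}\hat\eta)_3\,e^{-\bar\lambda_k x_3}\Bigr),
\]
and then invokes the algebraic identities of Lemma \ref{lem:eq:L_k} to test the equation against divergence-free $\zeta$. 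Your approximation route should work but would require care with the convergence of traces.

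Your uniqueness argument, however, has a genuine gap. A Ladyzhenskaya--Solonnikov truncation yields uniform \emph{local} $H^1$ bounds, never global $L^2(\R^3_+)$ integrability; there is no mechanism by which it would force $\na\tilde w\in L^2(\R^3_+)$, regardless of how many horizontal derivatives you apply. (Note also that the low-frequency degeneracy you invoke is a feature of the solution operator $v_0\mapsto u$, not of an arbitrary homogeneous solution $w$; applying $\na_h^N$ to such a $w$ buys no decay.) The paper avoids this entirely: from \eqref{est:SC-existence}, the zero trace, and a Poincar\'e inequality on $\{0<x_3<1\}$, one obtains $\sup_{k}\int_{k+[0,1]^2}\int_0^\infty|\na^q w|^2<\infty$, hence $w\in\mathcal C(\R_+,\mathcal S'(\R^2))$. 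One then takes the horizontal Fourier transform in the tempered-distribution sense; the system \eqref{eq:SC-Fourier} holds in $\mathcal S'_\xi$ and $\mathcal D'_{x_3}$, and the proof of Proposition \ref{prop:ex/uni-SC-H12} carries over verbatim after multiplication by $\varphi(\xi)$ supported away from the origin. The $L^2_{uloc}$ integrability of $\na^q w$ in $x_3$ is what excludes the exponentially growing modes---no global $L^2$ bound on $\na w$ is needed.
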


\begin{remark}
As in Proposition \ref{prop:est-SC-uloc}, the integer $q$ in the two results above is explicit and does not depend on $v_0$ (one can take $q=4$ for instance).
\end{remark}

\begin{proof}[Proof of Corollary \ref{cor:ex/uni-SC-uloc}]

\noindent \textit{Uniqueness.} Let $u$ be a solution of \eqref{SC-R3+} satisfying \eqref{est:SC-existence} and such that $u|_{x_3=0}=0$. We use the same type of proof as in Proposition \ref{prop:ex/uni-SC-H12} (see also \cite{DGVNMnoslip}). Using a Poincar\'e inequality near the boundary $x_3=0$, we have
$$
\sup_{k\in \Z^2} \int_{k+[0,1]^2}\int_0^\infty |\na^q u(x_h,x_3)|^2dx_3\:dx_h <\infty.
$$
Hence $u\in \mathcal C(\R_+, \mathcal S'(\R^2))$ and we can take the Fourier transform of $u$ with respect to the horizontal variable. The rest of the proof is identical to the one of Proposition \ref{prop:ex/uni-SC-H12}. The equations in \eqref{eq:SC-Fourier} are meant in the sense of tempered distributions in $x_h$, and in the sense of distributions in $x_3$, which is enough to perform all calculations.

\noindent \textit{Existence.} For all $x_h\in \R^2$, let $\chi\in \mathcal C^\infty_0(\R^2)$ such that $\chi\equiv 1$ on $B(x_h,1)$.
Then we set
\begin{eqnarray}
\nonumber u(x)&=&\mathcal{F}^{-1}\left(\sum_{k=1}^3 L_k(\xi) \begin{pmatrix}
\widehat{\chi v_{0,h}}(\xi)\\ \widehat{\na \cdot(\chi V_h)}
\end{pmatrix}
\exp(-\lambda_k x_3)\right)(x)\\
\nonumber&+& \sum_{k=1}^3\mathcal{ I}[M_k^1]f_k(\cdot,x_3)\ast((1-\chi) v_{0,h})(x)\\
\label{def:u:rep-form}&+& \sum_{k=1}^3\mathcal{ I}[N_k^1]f_k(\cdot,x_3)\ast((1-\chi) V_{h})(x)\\
\nonumber &+& \varphi_{HF}\ast \begin{pmatrix}
{(1-\chi) v_{0,h}}\\ {\na\cdot((1-\chi) V_h)}
\end{pmatrix}(x)\\
\nonumber &+& \psi_1\ast ((1-\chi) v_{0,h})(x) + \psi_2\ast ((1-\chi) V_h)(x) .
\end{eqnarray}

We first claim that this formula does not depend on the choice of the function $\chi$: indeed, let $\chi_1,\ \chi_2\in\mathcal C^\infty_0(\R^2)$ such that $\chi_i\equiv 1$ on $B(x_h,1)$. Then, since $\chi_1-\chi_2=0$ on $B(x_h,1)$ and $\chi_1-\chi_2$ is compactly supported, we may write
\begin{eqnarray*}
&&\sum_{k=1}^3\mathcal{ I}[M_k^1]f_k(\cdot,x_3)\ast((\chi_1-\chi_2) v_{0,h})+\psi_1\ast ((\chi_1-\chi_2) v_{0,h})\\
&=&\mathcal{F}^{-1}\left( \sum_{k=1}^3\phi(\xi)M_k\widehat{(\chi_1-\chi_2) v_{0,h}} \exp(-\lambda_k x_3)\right)
\end{eqnarray*}
and
\begin{eqnarray*}
&& \sum_{k=1}^3\mathcal{ I}[N_k^1]f_k(\cdot,x_3)\ast((\chi_1-\chi_2) V_{h})+\psi_2\ast ((\chi_1-\chi_2)V_h) \\
&=&\mathcal{F}^{-1}\left( \sum_{k=1}^3\phi(\xi)
N_k\widehat{(\chi_1-\chi_2) V_{h}} \exp(-\lambda_k x_3)\right)\\
&=&\mathcal{F}^{-1}\left( \sum_{k=1}^3\phi(\xi)
L_ke_3\mathcal F \left(\na \cdot(\chi_1-\chi_2) V_h\right) \exp(-\lambda_k x_3)\right).\end{eqnarray*}
On the other hand,
\begin{eqnarray*}
&&\varphi_{HF}\ast \begin{pmatrix}
{(\chi_1-\chi_2) v_{0,h}}\\ {\na\cdot((\chi_1-\chi_2) V_h)}
\end{pmatrix}\\
&=&\mathcal{F}^{-1}\left( \sum_{k=1}^3(1-\phi(\xi))L_k \begin{pmatrix}
\widehat{(\chi_1-\chi_2) v_{0,h}}\\ \widehat{\na\cdot((\chi_1-\chi_2) V_h)}
\end{pmatrix}\exp(-\lambda_k x_3)\right).
\end{eqnarray*}

Gathering all the terms, we find that the two definitions coincide. Moreover, $u$
satisfies \eqref{est:SC-existence} (we refer to the proof of Proposition \ref{prop:est-SC-uloc} for the derivation of such estimates: notice that the proof of Proposition \ref{prop:est-SC-uloc} only uses local integrability properties of $v_0$). 

There remains to prove that $u$ is a solution of the Stokes system, which is not completely trivial due to the complexity of the representation formula. We start by deriving a duality formula: we claim that for all $\eta\in \mathcal C^\infty_0(\R^2)^3$, for all $x_3>0$,
\begin{eqnarray}
\label{duality}
\int_{\R^2} u(x_h,x_3)\cdot \eta(x_h)\:dx_h
&=&\int_{\R^2}v_{0,h}(x_h) \cdot \mathcal F^{-1}\left(\sum_{k=1}^3 \left(\overline{\prescript{t}{}{L_k}} \hat \eta(\xi)\right)_h\exp(-\bar \lambda_k x_3)\right)\\
\nonumber&-&\int_{\R^2}V_h(x_h) \cdot \mathcal F^{-1}\left(\sum_{k=1}^3 i\xi\left(\overline{\prescript{t}{}{L_k}} \hat \eta(\xi)\right)_3\exp(-\bar \lambda_k x_3)\right).
\end{eqnarray}
To that end, in \eqref{def:u:rep-form}, we may choose a function $\chi\in \mathcal C^\infty_0(\R^2)$ such that $\chi\equiv 1$ on the set
$$
\{x\in \R^2, \ d(x,\Supp \eta)\leq 1\}.
$$
We then transform every term in \eqref{def:u:rep-form}. We have, according to the Parseval formula
\begin{eqnarray*}
&&\int_{\R^2}\mathcal{F}^{-1}\left(\sum_{k=1}^3 L_k(\xi)\begin{pmatrix}
\widehat{\chi v_{0,h}}(\xi)\\ \widehat{\na \cdot(\chi V_h)}(\xi)
\end{pmatrix}\exp(-\lambda_k x_3)\right) \cdot\eta\\
&=&\frac{1}{(2\pi)^2}\int_{\R^2}\sum_{k=1}^3\overline{\hat \eta(\xi )} \cdot L_k(\xi) \begin{pmatrix}
\widehat{\chi v_{0,h}}(\xi)\\ \widehat{\na \cdot(\chi V_h)}(\xi)
\end{pmatrix}\exp(-\lambda_k x_3)\:d\xi\\
&=&\int_{\R^2}\chi v_{0,h} \mathcal F^{-1}\left(\sum_{k=1}^3\left(\overline{\prescript{t}{}{L_k}} \hat \eta(\xi)\right)_h\exp(-\bar \lambda_k x_3)\right)\\
&-&\int_{\R^2}\chi V_h \cdot \mathcal F^{-1}\left(\sum_{k=1}^3 i\xi\left(\overline{\prescript{t}{}{L_k}} \hat \eta(\xi)\right)_3\exp(-\bar \lambda_k x_3)\right).
\end{eqnarray*}
Using standard convolution results, we have
$$\int_{\R^2} \psi_1\ast ((1-\chi) v_{0,h}) \eta=\int_{\R^2}  (1-\chi) v_{0,h} \prescript{t}{}{\check \psi_1}\ast \eta.
$$
The terms with $\psi_2$ and $\varphi_{HF}$ are transformed using identical computations. Concerning the term with $\mathcal I[M_k^1]$, we use Lemma \ref{lem:propriete-I}, from which we infer that
$$\int_{\R^2}\mathcal{I}\left[M_k^1\right] f_k\ast ((1-\chi) v_{0,h}) \eta=\int_{\R^2}(1-\chi) v_{0,h}\mathcal{I}\left[\prescript{t}{}{M_k^1} \right]\check f_k \ast \eta.
$$
Notice also that by definition of $M_k^1$, $\widecheck{M_k^1}= M_k^1$. Therefore
\begin{eqnarray*}
\int_{\R^2} \psi_1\ast ((1-\chi) v_{0,h}) \eta
&+&\sum_{k=1}^3\int_{\R^2}\mathcal{I}\left[M_k^1\right] f_k\ast ((1-\chi) v_{0,h}) \eta\\
&=&\int_{\R^2}(1-\chi) v_{0,h}\cdot \mathcal{F}^{-1}\left(\sum_{k=1}^3 \prescript{t}{}{\begin{pmatrix}
\widecheck L_ke_1 & \widecheck L_ke_2
\end{pmatrix}}\hat\eta \check\phi(\xi) \exp(-\check \lambda_k x_3)\right).
\end{eqnarray*}
and
\begin{eqnarray*}
&&\int_{\R^2} \psi_2\ast ((1-\chi) V_h \eta
+\sum_{k=1}^3\int_{\R^2}\mathcal{I}\left[N_k^1\right] f_k\ast ((1-\chi) V_h) \eta\\
&=&\int_{\R^2}(1-\chi) V_h\cdot \mathcal{F}^{-1}\left(\sum_{k=1}^3 \xi \prescript{t}{}{\begin{pmatrix}
i \widecheck L_k e_3
\end{pmatrix}}\hat\eta\check \phi(\xi) \exp(-\check \lambda_k x_3)\right).
\end{eqnarray*}
Now, we recall that if $v_0\in H^{1/2}(\R^2)\cap \bK$ is real-valued, then so is the solution $u$ of \eqref{SC-R3+}. Therefore, in Fourier space,
\begin{equation*}
\overline{\hat u(\cdot,x_3)}= \check{\hat u}(\cdot,x_3) \quad \forall x_3>0. 
\end{equation*}
We infer in particular that
$$
\sum_{k=1}^3 \check L_k \exp(-\check \lambda_k x_3)= \sum_{k=1}^3 \bar L_k \exp(-\bar \lambda_k x_3).
$$
Gathering all the terms, we obtain \eqref{duality}.

Now, let $\zeta\in\mathcal C^\infty_0(\R^2\times (0,\infty))^3$ such that $\na\cdot \zeta=0$, and $\eta\in\mathcal C^\infty_0(\R^2\times (0,\infty))$. We seek to prove that
\be\label{SC-weak}\int_{\R^3_+}u\left(-\Delta \zeta - e_3\times\zeta\right) =0
\ee
as well as
\be\label{div-free-weak}
\int_{\R^3_+}u\cdot \na \eta=0.
\ee
Using \eqref{duality}, we infer that
\begin{eqnarray*}
&&\int_{\R^3_+}u\left(-\Delta \zeta - e_3\times\zeta\right) \\
&=&\int_0^\infty \int_{\R^2} v_{0,h} \mathcal F^{-1}\left( \sum_{k=1}^3 \overline{\mathcal M_k(\xi)} \hat \zeta(\xi) \exp(-\bar \lambda_k x_3)\right)\\
&+& \int_0^\infty \int_{\R^2}V_h\mathcal F^{-1}\left( \sum_{k=1}^3 \overline{\mathcal N_k(\xi)} \hat \zeta(\xi) \exp(-\bar \lambda_k x_3)\right),
\end{eqnarray*}
where
$$
\begin{aligned}
\mathcal M_k:=(|\xi|^2-\lambda_k^2) \prescript{t}{}{M_k} +  \prescript{t}{}{M_k}\begin{pmatrix}
0&1&0\\-1&0&0\\0&0&0
\end{pmatrix},
\mathcal N_k:=(|\xi|^2-\lambda_k^2) \prescript{t}{}{N_k} +  \prescript{t}{}{N_k}\begin{pmatrix}
0&1&0\\-1&0&0\\0&0&0
\end{pmatrix}.
\end{aligned}
$$
According to Lemma \ref{lem:eq:L_k}, 
$$
\mathcal M_k=\begin{pmatrix}
i\xi_1 q_{k,1} & &i\xi_2 q_{k,1} & -\lambda_k q_{k,1}\\
i\xi_1 q_{k,2} & &i\xi_2 q_{k,2} & -\lambda_k q_{k,2}\end{pmatrix}$$
so that, since $i\xi\cdot \hat \zeta_h + \pa_3\hat \zeta_3=0$,
$$
\overline{\mathcal M_k(\xi)} \hat \zeta(\xi,x_3) =(\pa_3 \hat\zeta_3-\bar \lambda_k \hat \zeta_3)\begin{pmatrix} \bar q_{k,1}\\ \bar q_{k,2}
\end{pmatrix}.
$$
Integrating in $x_3$, we find that
$$
\int_0^\infty \overline{\mathcal M_k(\xi)} \hat \zeta(\xi,x_3)\exp(-\bar \lambda_k x_3)dx_3=0.
$$
Similar arguments lead to
$$
\int_0^\infty \int_{\R^2}V_h\mathcal F^{-1}\left( \sum_{k=1}^3 \overline{\mathcal N_k(\xi)} \hat \zeta(\xi,x_3) \exp(-\bar \lambda_k x_3)\right)=0$$
and to the divergence-free condition \eqref{div-free-weak}.
\end{proof}

\subsection{The Dirichlet to Neumann operator for the Stokes-Coriolis system}
\label{subsecDtoN}

We now define the Dirichlet to Neumann operator for the Stokes-Coriolis system with boundary data in $\bK$. We start by deriving its expression for a boundary data $v_0\in H^{1/2}(\R^2)$ satisfying \eqref{hyp:v03}, for which we 
 consider the unique solution $u$ of \eqref{SC-R3+} in $\dot H^1(\R^3_+)$. We recall that $u$ is defined in Fourier space by \eqref{def:u-Fourier}. The corresponding pressure term is given by
$$
\hat p(\xi, x_3)=\sum_{k=1}^3A_k(\xi) \frac{|\xi|^2-\lambda_k(\xi)^2}{\lambda_k(\xi)} \exp(-\lambda_k(\xi) x_3).
$$

The Dirichlet to Neumann operator is then defined by
$$
\DtoN v_0:=-\pa_3 u|_{x_3=0} + p|_{x_3=0} e_3.
$$
Consequently, in Fourier space, the Dirichlet to Neumann operator is given by
\be\label{DN-Fourier}
\widehat{\DtoN v_0}(\xi)=\sum_{k=1}^3A_k(\xi)\begin{pmatrix}
\frac{i}{|\xi|^2}(-\lambda_k^2\xi + (|\xi|^2-\lambda_k^2)^2\xi^\bot)\\
\frac{|\xi|^2}{\lambda_k}
\end{pmatrix}=:M_{SC}(\xi) \hat v_0(\xi),
\ee
where $M_{SC}\in \mathcal M_{3,3}(\C)$. Using the notations of the previous paragraph, we have
$$
M_{SC}=\sum_{k=1}^3 \lambda_k L_k + e_3\prescript{t}{}{q_k}.
$$
Let us first review a few useful properties of the Dirichlet to Neumann operator:
\begin{proposition}\label{prop:dev-MSC}\ 
\begin{itemize}
\item Behaviour at large frequencies: when $|\xi|\gg 1$,
$$
M_{SC}(\xi)=\begin{pmatrix}
\ds|\xi| + \frac{\xi_1^2}{|\xi|} & \ds\frac{\xi_1 \xi_2}{|\xi|} & i\xi_1\\
\ds \frac{\xi_1 \xi_2}{|\xi|} & |\xi| +\ds  \frac{\xi_2^2}{|\xi|} &i\xi_2\\
 -i\xi_1 & -i\xi_2 & 2 |\xi|
\end{pmatrix} + O(|\xi|^{1/3}).
$$

\item Behaviour at small frequencies: when $|\xi|\ll 1$,
$$
M_{SC}(\xi)=\frac{\sqrt{2}}{2}\begin{pmatrix}
1&-1&\ds\frac{i(\xi_1 + \xi_2)}{|\xi|}\\
1&1&\ds\frac{i(\xi_2-\xi_1)}{|\xi|}\\
\ds\frac{i(\xi_2-\xi_1)}{|\xi|} &\ds \frac{-i(\xi_1+\xi_2)}{|\xi|} & \ds\frac{\sqrt{2}}{|\xi|} -1
\end{pmatrix} + O(|\xi|).
$$

\item The horizontal part of the Dirichlet to Neumann operator, denoted by $\DtoN_h$, maps $H^{1/2}(\R^2)$ into $H^{-1/2}(\R^2)$.

\item Let $\phi\in \mathcal C^\infty_0(\R^2)$ such that $\phi(\xi)=1$ for $|\xi|\leq 1.$ Then
$$
\begin{aligned}
(1-\phi(D))\DtoN_3&:H^{1/2}(\R^2)\to H^{-1/2}(\R^2),\\
D\phi(D)\DtoN_3, |D|\phi(D)\DtoN_3&: L^2(\R^2)\to L^2(\R^2),
\end{aligned}
$$
where, classically, $a(D)$ denotes the operator defined in Fourier space by
$$
\widehat{a(D)u}= a(\xi)\hat u(\xi)
$$
for $a\in \mathcal C(\R^2)$, $u\in L^2(\R^2)$.

\end{itemize}
\end{proposition}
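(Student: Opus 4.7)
The starting point is the identity $M_{SC}=\sum_{k=1}^3\lambda_k L_k+e_3\,\prescript{t}{}{q_k}$, together with the fact that by \eqref{def:u-Fourier} and the pressure formula one has explicit expressions
$$
L_k(\xi)\hat v_0=A_k(\xi)\begin{pmatrix}\frac{i}{|\xi|^2}\left(-\lambda_k\xi+\frac{(|\xi|^2-\lambda_k^2)^2}{\lambda_k}\xi^\bot\right)\\ 1\end{pmatrix},\qquad q_k(\xi)\cdot\hat v_0=A_k(\xi)\frac{|\xi|^2-\lambda_k^2}{\lambda_k},
$$
where the vectors $A_k(\xi)\in\C^3$ (viewed as linear forms in $\hat v_0$) are given by Cramer's rule applied to the linear system \eqref{def:A}. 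This reduces the whole proposition to computing these expressions in the two asymptotic regimes and then reading off multiplier estimates.

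For the high-frequency expansion I plug the expansions of Lemma \ref{lem:dev-lambda-A} into these formulas, writing $\lambda_k=|\xi|-\frac{\alpha_k^2}{2}|\xi|^{-1/3}+O(|\xi|^{-5/3})$ with $\alpha_1=1,\alpha_2=j,\alpha_3=j^2$, and using the expansion \eqref{dev:A-infty} of $(A_1,A_2,A_3)$ in the basis $(B_1,B_2,B_3)$ of \eqref{def:B}. The point is that each individual $L_k$ grows like $|\xi|^{2/3}$ at worst (through the factor $(|\xi|^2-\lambda_k^2)^2/\lambda_k$), but when one sums over $k$ the non-cancelling contributions reorganise using the elementary identities $\sum_k \alpha_k^r=3\mathbf 1_{r\equiv 0[3]}$. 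After this bookkeeping, the leading part is the standard Stokes Dirichlet-to-Neumann symbol
$\bigl(\begin{smallmatrix}|\xi|\mathrm{Id}_2+\xi\otimes\xi/|\xi| & i\xi\\ -i\prescript{t}{}{\xi}& 2|\xi|\end{smallmatrix}\bigr)$,
and the remainder is dominated by the next order $|\xi|^{1/3}$, coming from the correction $|\xi|^{-1/3}$ in $\lambda_k$ multiplied by terms of order $|\xi|^{2/3}$. For the low-frequency expansion I plug the small-$\xi$ expansions \eqref{exprL1} of $L_1,L_2,L_3$ together with the small-$\xi$ values of $\lambda_k$ and of $q_k=A_k(|\xi|^2-\lambda_k^2)/\lambda_k$ (so $q_1=O(|\xi|^3)\hat v_0$ is negligible while $q_2,q_3$ contribute order-one terms), and sum directly. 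The main obstacle is the bookkeeping in the high-frequency step: each $\lambda_kL_k$ separately has an $O(|\xi|^{5/3})$ spurious term, so one has to expand one more order than naively expected in order to see the cancellation that leaves the Stokes symbol plus an $O(|\xi|^{1/3})$ error.

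The mapping statements then follow from these pointwise symbol estimates combined with the uniform bound on compact annuli given by the third bullet of Lemma \ref{lem:dev-lambda-A}. Indeed, from the two expansions the $2\times 2$ horizontal block $M_{SC,hh}$ and the symbol $M_{SC,h3}$ acting on $v_{0,3}$ are bounded for $|\xi|\ll 1$ (the singular entries $\xi_j/|\xi|$ of $M_{SC,h3}$ are bounded in modulus by $1$) and of size $O(|\xi|)$ for $|\xi|\gg 1$; hence $|\DtoN_h(\xi)|\leq C(1+|\xi|)$ on $\R^2$, which is exactly the condition that $\DtoN_h:H^{1/2}(\R^2)\to H^{-1/2}(\R^2)$ is bounded. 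For the third component, $M_{SC,3h}=O(1)$ and $M_{SC,33}$ behaves like $|\xi|^{-1}$ near $0$ and like $|\xi|$ at infinity. Multiplying by $1-\phi(\xi)$ kills the low-frequency singularity and leaves a symbol of order $|\xi|$, giving $(1-\phi(D))\DtoN_3:H^{1/2}\to H^{-1/2}$. Multiplying instead by $\xi\phi(\xi)$ or $|\xi|\phi(\xi)$ compensates the $|\xi|^{-1}$ blow-up while the cut-off $\phi$ discards the high frequencies, so the resulting symbol is bounded, yielding $L^2\to L^2$ boundedness by Plancherel. This is the easy part once the two asymptotic expansions are in hand.
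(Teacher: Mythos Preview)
Your proposal is correct and follows essentially the same approach as the paper: both start from the explicit Fourier expression \eqref{DN-Fourier} for $\widehat{\DtoN v_0}$, insert the asymptotic expansions of $\lambda_k$ and $A_k$ from Lemma \ref{lem:dev-lambda-A} (using the $B_k$ variables at high frequencies to organise the cancellations), and then read off the mapping properties from the resulting pointwise symbol bounds. The paper in fact defers the first two bullets to Appendix \ref{appendixexp}, where the computation is carried out exactly along the lines you sketch; your remark about the $\sum_k\alpha_k^r$ identities is precisely the mechanism behind the $B_k$ bookkeeping there.
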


\begin{remark}
For $|\xi|\gg 1$, the Dirichlet to Neumann operator for the Stokes-Coriolis system has the same expression, at main order, as the one of the Stokes system. This can be easily understood since at large frequencies, the rotation term in the system \eqref{eq:SC-Fourier} can be neglected in front of $|\xi|^2 \hat u$, and therefore the system behaves roughly as the Stokes system.

\end{remark}

\begin{proof}
The first two points follow from the expression \eqref{DN-Fourier} together with the  asymptotic expansions in Lemma \ref{lem:dev-lambda-A}. Since they are lengthy but straightforward calculations, we postpone them to the Appendix \ref{appendixexp}.

The horizontal part of the Dirichlet to Neumann operator satisfies
$$
\begin{aligned}
|\widehat{\DtoN_h v_0}(\xi)|=O(|\xi| \: |\hat v_0(\xi)|)\quad \text{for }|\xi|\gg 1,\\
|\widehat{\DtoN_h v_0}(\xi)|=O(|\hat v_0(\xi)|)\quad \text{for }|\xi|\ll 1.
\end{aligned}
$$
Therefore, if $\int_{\R^2}(1+|\xi|^2)^{1/2}  |\hat v_0(\xi)|^2\:d\xi <\infty$, we deduce that
$$
\int_{\R^2}(1+|\xi|^2)^{-1/2}  |\widehat{\DtoN_h v_0}(\xi)|^2\:d\xi <\infty.
$$
Hence $\DtoN_h:H^{1/2}(\R^2)\to H^{-1/2}(\R^2)$.

In a similar way, 
$$
|\widehat{\DtoN_3 v_0}(\xi)|=O(|\xi| \: |\hat v_0(\xi)|)\quad \text{for }|\xi|\gg 1,
$$
so that if $\phi\in \mathcal C^\infty_0(\R^2)$ is such that $\phi(\xi)=1$ for $\xi$ in a neighbourhood of zero, there exists a constant $C$ such that
$$
\left|(1-\phi(\xi)) \widehat{\DtoN_3 v_0}(\xi)\right|\leq C |\xi| \: |\hat v_0(\xi)|\quad\forall \xi\in \R^2.
$$
Therefore $(1-\phi(D))\DtoN_3:H^{1/2}(\R^2)\to H^{-1/2}(\R^2)$.

The vertical part of the Dirichlet to Neumann operator, however, is singular at low frequencies. This is consistent with the singularity observed in $L_1(\xi)$ for $\xi$ close to zero. More precisely, for $\xi$ close to zero, we have
$$
\widehat{\DtoN_3 v_0}(\xi)= \frac{1}{|\xi|}\hv_{0,3} +O(  |\hat v_0(\xi)|).
$$
Consequently, for all $\xi\in\mathbb R^2$
\begin{equation*}
\left|\xi \phi(\xi)\widehat{\DtoN_3 v_0}(\xi)\right|\leq C {|\hat v_0(\xi)|}.\qedhere
\end{equation*}
\end{proof}

Following \cite{DGVNMnoslip}, we now extend the definition of the Dirichlet to Neumann operator to functions which are not square integrable in $\R^2$, but rather locally uniformly integrable. There are several differences with \cite{DGVNMnoslip}: first, the Fourier multiplier associated with $\DtoN$ is not homogeneous, even at the main order. Therefore its kernel (the inverse Fourier transform of the multiplier) is not homogeneous either, and, in general, does not have the same decay as the kernel of Stokes system. Moreover, the singular part of the Dirichlet to Neumann operator for low frequencies prevents us from defining $\DtoN$ on $H^{1/2}_{uloc}$. Hence we will define $\DtoN$ on $\bK$ only (see also Corollary \ref{cor:ex/uni-SC-uloc}).

Let us briefly recall the definition  of the Dirichlet to Neumann operator for the Stokes system (see \cite{DGVNMnoslip}), which we denote by $\DtoN_S$\footnote{In \cite{DGVNMnoslip}, D. G\'erard-Varet and N. Masmoudi consider the Stokes system in $\R^2_+$ and not $\R^3_+$, but this part of their proof does not depend on the dimension.}. The Fourier multiplier of $\DtoN_S$ is
$$
M_S(\xi):=\begin{pmatrix}
\ds|\xi| + \frac{\xi_1^2}{|\xi|} & \ds\frac{\xi_1 \xi_2}{|\xi|} & i\xi_1\\
\ds \frac{\xi_1 \xi_2}{|\xi|} &\ds |\xi| +  \frac{\xi_2^2}{|\xi|} &i\xi_2\\
 -i\xi_1 & -i\xi_2 & 2 |\xi|
\end{pmatrix}.
$$
The inverse Fourier transform  of $M_S$ in $\mathcal S'(\R^2)$ is homogeneous of order -3, and consists of two parts:
\begin{itemize}
\item One part which is the inverse Fourier transform of  coefficients equal to $i\xi_1$ or  $i\xi_2$. This part is singular, and is the derivative of a Dirac mass at point $t=0$. 

\item One kernel part, denoted by $K_S$, which satisfies
$$
|K_S(t)|\leq \frac{C}{|t|^3}.
$$
\end{itemize}
In particular, it is legitimate to say that
$$
\left|\mathcal F^{-1} M_S (t)\right|\leq \frac{C}{|t|^3}\quad \text{in }\mathcal D'(\R^2\setminus\{0\}).
$$

Hence $\DtoN_S$ is defined on $H^{1/2}_{uloc}$ in the following way: for all $\varphi\in \mathcal C^\infty_0(\R^2)$, let $\chi\in \mathcal C^\infty_0(\R^2)$ such that $\chi\equiv 1$ on the set $\{t\in \R^2,d(t,\Supp\varphi)\leq 1\}$. Then
$$
\langle \DtoN_S u, \varphi\rangle_{\mathcal D',\mathcal D}:=\langle\mathcal F^{-1}\left( M_S \widehat{\chi u}\right), \varphi\rangle_{H^{-1/2}, H^{1/2}}+ \int_{\R^2}K_S\ast ((1-\chi)u)\cdot \varphi.
$$
The assumption on $\chi$ ensures that there is no singularity in the last integral, while the decay of $K_S$ ensures its convergence. Notice also that the singular part (which is local in the physical space) is only present in the first term of the decomposition.

We wish to adopt a similar method here, but a few precautions must be taken because of the singularities at low frequencies, in the spirit of the representation formula \eqref{def:u:rep-form}.
Hence, before defining the action of $\DtoN$ on $\bK$, let us decompose the Fourier multiplier associated with $\DtoN$.
We have
$$
M_{SC}(\xi)= M_S(\xi) + \phi(\xi)(M_{SC}-M_S)(\xi) + (1-\phi)(\xi )(M_{SC}-M_S)(\xi).
$$
Concerning the third term, we have the following result, which is a straightforward consequence of Proposition \ref{prop:dev-MSC} and Appendix \ref{appendixrestes}:
\begin{lemma}
As $|\xi|\to \infty$, there holds
$$
\begin{aligned}
\na^\alpha_\xi (M_{SC}-M_S)(\xi)&=O\left(|\xi|^{\frac{1}{3}-|\alpha|}\right)
\end{aligned}
$$
for $\alpha\in \N^2$, $0\leq |\alpha|\leq 3$.

\label{lem:deriv3DN}
\end{lemma}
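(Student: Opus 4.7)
The plan is to expand $M_{SC}$ explicitly at infinity using the formula $M_{SC}=\sum_{k=1}^{3}\lambda_k L_k + e_3\,\prescript{t}{}{q_k}$ together with the high-frequency expansions of $\lambda_k$ and $A_k$ from Lemma \ref{lem:dev-lambda-A}, then observe that $M_S$ is by definition the principal symbol that emerges, so the difference $M_{SC}-M_S$ is a sum of symbol-type terms of order at most $1/3$; the derivative estimates will follow from the general rules in Appendix \ref{appendixrestes} for differentiating expressions of the form $|\xi|^{s} P(\xi/|\xi|)$ plus remainders.

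More concretely, the first step is to obtain, from \eqref{dev:A-infty} and the defining relations \eqref{def:u-Fourier} and \eqref{def:up}, expansions of the form
\begin{equation*}
L_k(\xi)=L_k^{(0)}(\xi)+L_k^{(-2/3)}(\xi)+R_k(\xi),\qquad q_k(\xi)=q_k^{(0)}(\xi)+Q_k(\xi),
\end{equation*}
where $L_k^{(0)}, L_k^{(-2/3)}$ are finite linear combinations of terms $|\xi|^{s}\prod(\xi_i/|\xi|)$ with $s\in\{0,-2/3\}$, and where the remainders satisfy $R_k=O(|\xi|^{-4/3})$, $Q_k=O(|\xi|^{-1/3})$ at infinity; similarly $\lambda_k=|\xi|+c_k|\xi|^{-1/3}+r_k$ with $r_k=O(|\xi|^{-5/3})$. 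Plugging these into $M_{SC}$ and collecting terms, one checks that the $|\xi|^{1}$-piece reconstitutes exactly the matrix $M_S(\xi)$ (which is reassuring, as this is the content of the first bullet of Proposition \ref{prop:dev-MSC}). Hence $M_{SC}-M_S$ is a sum of homogeneous symbols of order $1/3$ and lower-order remainders.

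For the derivative bounds it then suffices to show, for each building block, that $\nabla^\alpha_\xi(\cdot)=O(|\xi|^{s-|\alpha|})$ where $s$ is its order. For the explicit homogeneous pieces $|\xi|^{s}\prod(\xi_i/|\xi|)$ this is a direct computation and is exactly the content of the appendix on remainders (\ref{appendixrestes}). For the true remainder terms $R_k, Q_k, r_k$, the same appendix provides the rule that if a smooth function is $O(|\xi|^{s})$ at infinity and arises as a ratio of polynomial/algebraic expressions of homogeneous type, its derivatives gain a factor $|\xi|^{-|\alpha|}$. Combining these, every summand in the decomposition of $M_{SC}-M_S$ satisfies the required bound $O(|\xi|^{1/3-|\alpha|})$ for $|\alpha|\leq 3$, which gives the lemma.

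The main obstacle is less a conceptual one than a bookkeeping one: one must make sure that the cancellations giving rise to $M_S$ occur at the symbol level and that no $|\xi|^{1}$-type term hides in the remainders after recombination, since a single miscounted factor would ruin the gain of $|\xi|^{-1}$ per derivative. For this reason it is convenient to use the basis $B_1,B_2,B_3$ introduced in \eqref{def:B}, in which the $3\times 3$ matrix $M_{SC}$ admits a cleaner block expansion in powers of $|\xi|^{-1/3}$, so that differentiation rules can be applied uniformly.
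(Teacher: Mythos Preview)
Your proposal is correct and follows essentially the same route as the paper. The paper's proof is literally the one-liner ``straightforward consequence of Proposition \ref{prop:dev-MSC} and Appendix \ref{appendixrestes}'': Proposition \ref{prop:dev-MSC} (whose high-frequency part is computed in Appendix \ref{appendixexp} using exactly the $B_1,B_2,B_3$ basis you suggest) gives $M_{SC}-M_S=O(|\xi|^{1/3})$, and Lemma \ref{lem:restes-bis} records that $(M_{SC}-M_S)(\xi)$ is built from functions in the algebra $E$ composed with $|\xi|^{-1}$, so the differentiation rules of Appendix \ref{appendixrestes} yield the loss of $|\xi|^{-1}$ per derivative. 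Your write-up simply unpacks these two ingredients.
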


We deduce from Lemma \ref{lem:deriv3DN} that $\na^\alpha \left[(1-\phi(\xi))(M_{SC}-M_S)(\xi)\right]\in L^1(\R^2)$ for all $\alpha\in \N^2$ with $|\alpha|=3$, so that it follows from lemma \ref{lemappendixB1} that there exists a constant $C>0$ such that
$$
\left| \mathcal F^{-1} \left[(1-\phi(\xi))(M_{SC}-M_S)(\xi)\right](t)\right|\leq \frac{C}{|t|^3}.
$$

There remains to decompose $\phi(\xi)(M_{SC}-M_S)(\xi)$. As in Proposition \ref{prop:est-SC-uloc}, the multipliers which are homogeneous of order one near $\xi=0$ are treated separately. Note that since the last column and the last line of $M_{SC}$ act on horizontal divergences (see Proposition \ref{prop:def-DN}), we are interested in multipliers homogeneous of order zero in $M_{SC,3i}, M_{SC,i3}$ for $i=1,2$, and homogeneous of order $-1$ in $M_{SC,33}$. In the following, we set
$$
\begin{aligned}
\bar M_h:=\frac{\sqrt{2}}{2}\begin{pmatrix}
1&-1\\
1&1
\end{pmatrix} ,\quad \bar M:=\begin{pmatrix}
\bar M_h&0\\0&0
\end{pmatrix},\\
V_1:=\frac{i\sqrt{2}}{2|\xi|}\begin{pmatrix}
\xi_1 +\xi_2\\\xi_1-\xi_2
\end{pmatrix},\quad V_2:=\frac{i\sqrt{2}}{2|\xi|}\begin{pmatrix}
-\xi_1 +\xi_2\\-\xi_1-\xi_2
\end{pmatrix}.
\end{aligned}
$$
We decompose $M_{SC}-M_S$ near $\xi=0$ as 
$$
\phi(\xi)(M_{SC}-M_S)(\xi)= \bar M + \phi(\xi)\begin{pmatrix} M_1&V_1\\ \prescript{t}{}{V_2}& |\xi|^{-1}
\end{pmatrix}-(1-\phi(\xi))\bar M + \phi(\xi)M^{rem},
$$
where $M_1\in \mathcal M_2(\C)$ only contains homogeneous and nonpolynomial terms of order one, and $M^{rem}_{ij}$ contains either polynomial terms or remainder terms which are $o(|\xi|)$ for $\xi$ close to zero if $1\leq i,j\leq 2$. Looking closely at the expansions for $\lambda_k$ in a neighbourhood of zero (see \eqref{dvptlambdakBF}) and at the calculations in paragraph \ref{sssec:LFDN}, we infer that $M^{rem}_{ij}$ contains either polynomial terms or remainder terms of order $O(|\xi|^2)$ if $1\leq i,j\leq 2$. We emphasize that the precise expression of $M^{rem}$ is not needed in the following, although it can be computed by pushing forward the expansions of Appendix \ref{appendixexp}. In a similar fashion, $M^{rem}_{i,3}$ and $M^{rem}_{3,i}$ contain constant terms and remainder terms 
of order $O(|\xi|)$ for $i=1,2$, $M^{rem}_{3,3}$ contains remainder terms of order $O(1)$.
 As a consequence, if we define the low-frequency kernels $K^{rem}_i:\R^2\to \mathcal M_{2}(\C)$ for $1\leq i\leq 4$ by
$$
\begin{aligned}
K^{rem}_1&:=\mathcal F^{-1} \left(\phi\begin{pmatrix}
M^{rem}_{11}&M^{rem}_{12}\\M^{rem}_{21}&M^{rem}_{22}
\end{pmatrix}\right),\\
K^{rem}_2&:=\mathcal F^{-1}\left(\phi\begin{pmatrix}
M^{rem}_{13}\\M^{rem}_{23}
\end{pmatrix}i\begin{pmatrix}
\xi_1 &\xi_2
\end{pmatrix}\right),\\
K^{rem}_3&:=\mathcal F^{-1}\left(-i\phi (\xi)\xi \begin{pmatrix}
M^{rem}_{31}&M^{rem}_{32}
\end{pmatrix}\right),\\
K^{rem}_4&:=\mathcal F^{-1}\left(\phi(\xi )M^{rem}_{33}\begin{pmatrix}
\xi_1^2&\xi_1\xi_2\\\xi_1\xi_2 &\xi_2^2
\end{pmatrix}\right)
\end{aligned}
$$
we have, for $1\leq i \leq 4$ (see Lemmas \ref{lem:restes} and \ref{lem:restes-bis})
$$
|K^{rem}_i(x_h)|\leq \frac{C}{|x_h|^3}\quad \forall x_h\in \R^2.
$$
We also denote by $M^{rem}_{HF}$ the kernel part of
$$
\mathcal F^{-1}\left(-(1-\phi)\bar M  + (1-\phi)(M_{SC}-M_S)\right),
$$
which  satisfies
$$
|M^{rem}_{HF}(x_h)|\leq \frac{C}{|x_h|^3}\quad \forall x_h\in \R^2\setminus\{0\}.
$$
Notice that there is also a singular part in $\mathcal F^{-1}\left(-(1-\phi)\bar M\right)$, which corresponds in fact to $\mathcal F^{-1}(-\bar M)$, and which is therefore a Dirac mass at $x_h=0$.

There remains to define the kernels homogeneous of order one besides $M_1$. We set
$$
\begin{aligned}
M_2&:= V_1 i\begin{pmatrix}
\xi_1 &\xi_2
\end{pmatrix},\\
M_3&:=-i\xi \prescript{t}{}{V_2},\\
M_4&:=\frac{1}{|\xi|}\begin{pmatrix}
\xi_1^2&\xi_1\xi_2\\\xi_1\xi_2&\xi_2^2
\end{pmatrix},
\end{aligned}
$$
so that $M_1,M_2,M_3,M_4$ are $2\times 2$ real valued matrices whose coefficients are linear combinations of $\frac{\xi_i \xi_j}{|\xi|}$.
In the end, we will work with the following decomposition for the matrix $M_{SC}$, where the treatment of each of the terms has been explained above:
$$
M_{SC}= M_S + \bar M + (1-\phi)(M_{SC}- M_S - \bar M)+ \phi \begin{pmatrix}
M_1 & V_1 \\ \prescript{t}{}{V_2}& |\xi|^{-1}
\end{pmatrix}
+ \phi M^{rem} .
$$

We are now ready to extend the definition of the Dirichlet to Neumann operator to functions in $\bK$: in the spirit of Proposition \ref{prop:est-SC-uloc}-Corollary \ref{cor:ex/uni-SC-uloc}, we derive a representation formula for functions in $\bK\cap H^{1/2}(\R^2)^3$, which still makes sense for functions in $\bK$:

\begin{proposition}
Let $\varphi\in \mathcal C^\infty_0\left(\R^2\right)^3$ such that $\varphi_3=\na_h\cdot \Phi_h$ for some $\Phi_h\in \mathcal C^\infty_0\left(\R^2\right)$. Let $\chi\in \mathcal C^\infty_0(\R^2)$ such that $\chi\equiv 1$ on the set
$$
\{x\in \R^2,\ d(x,\Supp \varphi \cup \Supp \Phi_h)\leq 1\}.
$$
Let $\phi\in\mathcal C^\infty_0(\R^2_\xi)$ such that $\phi(\xi)=1$ if $|\xi|\leq 1$, and let $\rho:=\mathcal F^{-1}\phi$.

\begin{itemize}
\item Let $v_0\in H^{1/2}(\R^2)^3 $ such that $v_{0,3}=\na_h\cdot V_h$. Then
\begin{eqnarray*}
\left\langle \DtoN(v_0),\varphi\right\rangle_{\mathcal D',\mathcal D}&=&\left\langle \DtoN_S(v_0),\varphi\right\rangle_{\mathcal D',\mathcal D} +\int_{\R^2}\varphi\cdot \bar M v_0\\
&+&\left\langle \mathcal{F}^{-1}\left((1-\phi)\left(M_{SC}-M_S -\bar M\right)\widehat{\chi v_0}\right),\varphi\right\rangle_{H^{-1/2}, H^{1/2}}\\
&+& \int_{\R^2}\varphi\cdot M^{rem}_{HF}\ast ((1-\chi )v_0)\\
&+&\left\langle\mathcal F^{-1}\left( \phi \left(M^{rem} + \begin{pmatrix}
M_1 & V_1\\\prescript{t}{}{V_2}& |\xi|^{-1}
\end{pmatrix}\right)\begin{pmatrix}
\widehat{\chi v_{0,h}}\\ i\xi\cdot \widehat{\chi V_h}
\end{pmatrix} \right) ,\varphi\right\rangle_{H^{-1/2}, H^{1/2}}\\
&+& \int_{\R^2} \varphi_h\cdot \left\{\mathcal I[M_1] (\rho \ast (1-\chi) v_{0,h}) + K^{rem}_1\ast ((1-\chi) v_{0,h}) \right\} \\
&+& \int_{\R^2} \varphi_h\cdot \left\{\mathcal I[M_2] (\rho \ast (1-\chi) V_h) + K^{rem}_2\ast ((1-\chi) V_h) \right\} \\
&+& \int_{\R^2} \Phi_h\cdot \left\{\mathcal I[M_3] (\rho \ast (1-\chi) v_{0,h}) + K^{rem}_3\ast ((1-\chi) v_{0,h}) \right\} \\
&+& \int_{\R^2} \Phi_h\cdot \left\{\mathcal I[M_4] (\rho \ast (1-\chi) V_h) + K^{rem}_4\ast ((1-\chi) V_h) \right\} .
\end{eqnarray*}

\item The above formula still makes sense when $v_0\in \bK$, which allows us to extend the definition of $\DtoN$ to $\bK$.

\end{itemize}

\label{prop:def-DN}
\end{proposition}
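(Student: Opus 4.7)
The strategy is to start from the Parseval identity
\[
\langle \DtoN v_0,\varphi\rangle_{\mathcal D',\mathcal D}=\frac{1}{(2\pi)^2}\int_{\R^2}\overline{\hat\varphi(\xi)}\cdot M_{SC}(\xi)\hat v_0(\xi)\,d\xi,
\]
which is valid for $v_0\in H^{1/2}(\R^2)^3$ satisfying \eqref{hyp:v03}, and to split $v_0=\chi v_0+(1-\chi)v_0$, $V_h=\chi V_h+(1-\chi)V_h$. Since $\chi v_0,\ \chi V_h\in H^{1/2}(\R^2)$ with compact support and $v_0=\nabla_h\cdot(\chi V_h)+\nabla_h\cdot((1-\chi)V_h)$ up to a lower-order commutator, all the terms involving $\chi v_0,\ \chi V_h$ can be left in Fourier-space form, whereas for $(1-\chi)v_0,\ (1-\chi)V_h$ we must produce absolutely convergent integral representations, exploiting the fact that $(1-\chi)$ vanishes in a neighbourhood of $\mathrm{Supp}\,\varphi\cup\mathrm{Supp}\,\Phi_h$.

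\textbf{Step 1 (decomposition of $M_{SC}$).} I would use the authors' decomposition
\[
M_{SC}=M_S+\bar M+(1-\phi)(M_{SC}-M_S-\bar M)+\phi\begin{pmatrix}M_1 & V_1\\ \prescript{t}{}{V_2} & |\xi|^{-1}\end{pmatrix}+\phi M^{rem}
\]
and treat each summand separately. For $M_S$, I invoke directly the Stokes Dirichlet--to--Neumann operator of G\'erard-Varet and Masmoudi, already extended to $H^{1/2}_{uloc}$. For $\bar M$, the symbol is a constant matrix, so $\bar M\ast\,\cdot\,$ is pointwise multiplication; since $(1-\chi)v_0$ vanishes on $\mathrm{Supp}\,\varphi$, only the term $\int\varphi\cdot\bar M v_0$ survives. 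For $(1-\phi)(M_{SC}-M_S-\bar M)$, the estimate of Lemma \ref{lem:deriv3DN} (and Appendix~\ref{appendixrestes}) shows the inverse Fourier transform has kernel $M^{rem}_{HF}(x_h)=O(|x_h|^{-3})$ on $\R^2\setminus\{0\}$; its action on $\chi v_0$ is handled in $H^{-1/2}$--$H^{1/2}$ duality, while its action on $(1-\chi)v_0$ yields the absolutely convergent convolution $\int \varphi\cdot M^{rem}_{HF}\ast((1-\chi)v_0)$.

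\textbf{Step 2 (the low-frequency singular blocks).} The hardest block is $\phi(\xi)\bigl(\begin{smallmatrix}M_1 & V_1\\ \prescript{t}{}{V_2} & |\xi|^{-1}\end{smallmatrix}\bigr)$, which is singular at $\xi=0$. Here I would crucially use the divergence form $v_{0,3}=\nabla_h\cdot V_h$ and $\varphi_3=\nabla_h\cdot\Phi_h$, rewriting
\[
V_1\hat v_{0,3}=V_1\,i\xi\cdot\hat V_h=M_2\hat V_h,\qquad \prescript{t}{}{V_2}\hat v_{0,h}\cdot\hat\varphi_3=-i\xi\prescript{t}{}{V_2}\hat v_{0,h}\cdot\hat\Phi_h=M_3\hat v_{0,h}\cdot\hat\Phi_h,
\]
and similarly $|\xi|^{-1}\hat v_{0,3}\cdot\hat\varphi_3=M_4\hat V_h\cdot\hat\Phi_h$, where each $M_j$ ($j=1,\dots,4$) is a linear combination of the symbols $\xi_i\xi_k/|\xi|$. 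On the local part $\chi v_0$ these are applied via Parseval; on the far part $(1-\chi)v_0$, $(1-\chi)V_h$, I split $\phi=\phi\cdot 1$ and realize $\phi M_j$ as the composition of $M_j$ with the smoothing convolution against $\rho=\mathcal F^{-1}\phi$, so that Lemma~\ref{lem:noyau-ordre1} yields the representation via the $\mathcal I[M_j]$ operators from Definition~\ref{defIordre1}. Finally, the remainder $\phi M^{rem}$ is split block-wise: using the expansions from Appendix~\ref{appendixexp}, the blocks $\phi M^{rem}_{ij}$, $\phi\xi_k M^{rem}_{i3}$, $\phi\xi_k M^{rem}_{3j}$, $\phi\xi_k\xi_\ell M^{rem}_{33}$ have symbols that are, up to polynomials, $O(|\xi|^2)$ near zero, giving kernels $K^{rem}_1,\dots,K^{rem}_4$ that decay like $|x_h|^{-3}$ by Lemmas~\ref{lem:restes}--\ref{lem:restes-bis}; these then convolve safely with $(1-\chi)v_{0,h}$ or $(1-\chi)V_h$.

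\textbf{Step 3 (checking independence of $\chi$ and extension to $\bK$).} As in the proof of Corollary~\ref{cor:ex/uni-SC-uloc}, one verifies that the representation is independent of the choice of the cut-off $\chi$ by the same regrouping argument: any pair of admissible cut-offs $\chi_1,\chi_2$ differ on a compactly supported bounded function, and the duality/convolution pieces recombine to the same Fourier-side object. For the extension to $v_0\in\bK$, I would check that each term in the formula is meaningful when $v_{0,h},V_h\in H^{1/2}_{uloc}$. The Fourier-side terms involve only $\chi v_0,\chi V_h$ which are in $H^{1/2}(\R^2)$, so are handled as in the $H^{1/2}$ case. The convolutions with $M^{rem}_{HF},K^{rem}_1,\dots,K^{rem}_4$ are absolutely convergent thanks to the $|x_h|^{-3}$ decay and $L^2_{uloc}$ (hence $L^1_{loc}$) integrability of $(1-\chi)v_0,(1-\chi)V_h$, just as in the proof of Proposition~\ref{prop:est-SC-uloc}. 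The terms with $\mathcal I[M_j]$ are controlled by Lemma~\ref{lemauxorder1} and Remark~\ref{rem:convol}, applied with $g=(1-\chi)v_{0,h}$ or $g=(1-\chi)V_h$ and $\rho=\mathcal F^{-1}\phi$, which has all moments in $L^2$.

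\textbf{Main obstacle.} The principal difficulty is the singular block $|\xi|^{-1}$ in $M_{SC,33}$ at low frequencies, reflecting the Stokes--Coriolis degeneracy already visible in \eqref{hyp:v03}. It cannot be treated by a straightforward convolution, even with the low-frequency cutoff $\phi$, because the corresponding kernel has only $|x_h|^{-1}$ decay and is not locally integrable against $L^2_{uloc}$ data. The resolution is to use the divergence form of $v_{0,3}$ and $\varphi_3$ \emph{simultaneously}, which effectively transfers two derivatives onto the singular symbol and converts $\phi(\xi)|\xi|^{-1}$ into an order-one Fourier multiplier of the type handled by $\mathcal I[M_4]$; this is precisely why the proposition is stated in terms of $V_h$ and $\Phi_h$ rather than $v_{0,3}$ and $\varphi_3$ themselves.
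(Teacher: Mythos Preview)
Your proposal is correct and follows precisely the approach the paper intends: the paper itself does not spell out the proof of this proposition, stating only that it ``is very close to the one of Proposition~\ref{prop:est-SC-uloc} and Corollary~\ref{cor:ex/uni-SC-uloc}, and therefore we leave it to the reader.'' Your outline supplies exactly those details---the Parseval start, the $\chi/(1-\chi)$ splitting, the five-piece decomposition of $M_{SC}$, the use of the divergence structure on both $v_{0,3}$ and $\varphi_3$ to convert the $|\xi|^{-1}$ block into the $\mathcal I[M_4]$ term, and the extension to $\bK$ via Lemmas~\ref{lemauxorder1}, \ref{lem:restes}--\ref{lem:restes-bis}---and your identification of the main obstacle (the low-frequency singularity in $M_{SC,33}$) is the right one.
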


\begin{remark}
Notice that if $v_0\in \bK$ and $\varphi \in \bK$ with $\varphi_3=\na_h\cdot\Phi_h$, and if $\varphi, \Phi_h$ have compact support, then the right-hand side of the formula in Proposition \ref{prop:def-DN} still makes sense. Therefore $\DtoN v_0$ can be extended into a linear form on the set of functions in $\bK$ with compact support. In this case, we will denote it by
$$
\langle \DtoN(v_0),\varphi \rangle,
$$
without specifying the functional spaces.
\end{remark}

The proof of the  Proposition \ref{prop:def-DN} is very close to the one of Proposition \ref{prop:est-SC-uloc} and Corollary \ref{cor:ex/uni-SC-uloc}, and therefore we leave it to the reader.

The goal is now to link the solution of the Stokes-Coriolis system in $\R^3_+$ with $v_0\in \bK$ and $\DtoN(v_0)$. This is done through the following lemma:

\begin{lemma}
Let $v_0\in \bK$, and let $u$ be the unique solution of \eqref{SC-R3+} with $u|_{x_3=0}=v_0$, given by Corollary \ref{cor:ex/uni-SC-uloc}. 

Let $\varphi\in \mathcal C^\infty_0(\bar \R^3_+)^3$ such that $\na \cdot \varphi=0$. Then
$$
\int_{\R^3_+}\na u \cdot \na \varphi + \int_{\R^3_+} e_3\times u\cdot \varphi = \langle \DtoN(v_0),\varphi|_{x_3=0}\rangle.
$$
In particular, if $v_0\in \bK$ with $v_{0,3}=\na_h\cdot V_h$ and if $v_0,V_h$ have compact support, then
$$
\langle \DtoN(v_0),v_0\rangle\geq 0.
$$

\label{lem:lien-SC/DN}

\end{lemma}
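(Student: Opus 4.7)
The plan is to establish the identity first in the classical $H^{1/2}$ setting, then extend it to $\bK$ by splitting the boundary data, and finally deduce the positivity by a density argument where $\varphi$ approximates $u$ itself.

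\textbf{Step 1: the identity for $H^{1/2}$ data.} Assume first that $v_0\in H^{1/2}(\R^2)^3$ with $v_{0,3}=\na_h\cdot V_h$ for some $V_h\in H^{1/2}(\R^2)^2$. Then the integrability condition \eqref{hyp:v03} holds, so Proposition \ref{prop:ex/uni-SC-H12} yields a solution $u$ with $\na u \in L^2(\R^3_+)$, and the associated pressure $p$, defined by \eqref{def:up}, is locally $L^2$ with $\na p \in L^2_{loc}$. Multiplying \eqref{SC-R3+} by $\varphi$ and integrating by parts (using $\na\cdot\varphi=0$ to kill the interior pressure term, and noting that the outward normal at $\{x_3=0\}$ is $-e_3$) gives
\[
\int_{\R^3_+} \na u : \na \varphi + \int_{\R^3_+} (e_3\times u)\cdot\varphi \;=\; \int_{\R^2}(-\pa_3 u + p\,e_3)\bigl|_{x_3=0}\cdot\varphi\bigl|_{x_3=0},
\]
which is precisely $\langle \DtoN v_0,\varphi|_{x_3=0}\rangle$ by the definition \eqref{DN-Fourier}.

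\textbf{Step 2: extension to $v_0\in\bK$.} Both sides of the identity are linear in $v_0$. Let $R>0$ be such that $\Supp\varphi\subset B(0,R)\times[0,R]$, and pick $\chi\in\mathcal C^\infty_0(\R^2)$ with $\chi\equiv 1$ on $\{|x_h|\leq R+1\}$. Set $\tilde v_0:=(\chi v_{0,h},\,\na_h\cdot(\chi V_h))\in H^{1/2}(\R^2)^3$, which satisfies the compatibility condition of Step~1 with lift $\chi V_h$; denote by $\tilde u$ the corresponding solution. By Step 1 the identity holds for the pair $(\tilde v_0,\tilde u)$. By Corollary \ref{cor:ex/uni-SC-uloc}, $u-\tilde u$ is the (unique) Stokes--Coriolis solution with boundary data $v_0-\tilde v_0$, which vanishes on a neighborhood of $\pi(\Supp\varphi)$ (here $\pi$ denotes projection to $\R^2$). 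The representation formula \eqref{def:u:rep-form} applied with the same cutoff $\chi$ shows that on a neighbourhood of $\Supp\varphi$ the remaining contribution is a convolution of the smooth, rapidly decaying kernels $\varphi_{HF},\psi_1,\psi_2$ and of the $\mathcal I[M_k^1],\mathcal I[N_k^1]$ terms against data supported far from $\pi(\Supp\varphi)$, hence smooth there. One may then integrate $-\Delta(u-\tilde u)+e_3\times(u-\tilde u)+\na(p-\tilde p)=0$ by parts against $\varphi$ over $\R^3_+\cap(B(0,R)\times[0,R])$; the resulting surface integral on $\{x_3=0\}$ is exactly the sum of the ``far'' terms appearing in the representation of $\DtoN(v_0-\tilde v_0)$ given by Proposition \ref{prop:def-DN}, thanks to Lemma \ref{lem:propriete-I} used to transfer derivatives across the $\mathcal I[\,\cdot\,]$ operators. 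Adding the two contributions produces the identity for $v_0\in\bK$.

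\textbf{Step 3: non-negativity.} If $v_0\in\bK$ with $v_0,V_h$ compactly supported, then $v_0\in H^{1/2}(\R^2)^3$ and, since $|\widehat{v_{0,3}}|^2/|\xi|=|\xi|\,|\widehat{V_h}\cdot\xi/|\xi|\,|^2\leq |\xi|\,|\widehat{V_h}|^2$, the condition \eqref{hyp:v03} is satisfied; consequently $u\in\dot H^1(\R^3_+)$ by Proposition \ref{prop:ex/uni-SC-H12}. I construct a sequence $\varphi_n\in\mathcal C^\infty_0(\bar\R^3_+)^3$ with $\na\cdot\varphi_n=0$ and $\varphi_n|_{x_3=0}=v_0$ for every $n$, such that $\varphi_n\to u$ in $\dot H^1(\R^3_+)$: starting from $\eta_n(x_3)u$ with $\eta_n$ a vertical cutoff equal to $1$ on $[0,n]$ and supported in $[0,2n]$, I restore the divergence-free constraint by adding a Bogovski\u\i{} correction localized in $\{n<x_3<2n\}$ (which vanishes at $x_3=0$), and finally mollify without altering the trace. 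Applying the identity of Steps~1--2 to $\varphi_n$ gives
\[
\int_{\R^3_+}\na u:\na\varphi_n+\int_{\R^3_+}(e_3\times u)\cdot\varphi_n=\langle\DtoN v_0,v_0\rangle,
\]
and passing to the limit, together with the pointwise identity $(e_3\times u)\cdot u=0$, yields $\langle\DtoN v_0,v_0\rangle=\|\na u\|_{L^2(\R^3_+)}^2\geq 0$.

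\textbf{Main obstacle.} The delicate step is Step~2: matching, term by term, the far-field parts of the representation formula for $u$ (from Corollary \ref{cor:ex/uni-SC-uloc}) with those of $\DtoN v_0$ (from Proposition \ref{prop:def-DN}). The presence of the singular kernels $\mathcal I[M_k^1]$ at low frequencies, which are not ordinary convolution operators, forces one to use Lemma \ref{lem:propriete-I} carefully to legitimize the transfer of derivatives in the integration by parts; the high-frequency and remainder pieces, being genuine convolutions against kernels decaying like $|x|^{-3}$, are comparatively straightforward.
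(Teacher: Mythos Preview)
Your overall strategy is sound and Step~1 and Step~3 are essentially correct, but your approach in Step~2 is genuinely different from the paper's, and the key claim there is asserted rather than proved.

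The paper does not split $v_0$ into near and far pieces. Instead it establishes two duality formulas, valid directly for $v_0\in\bK$:
\[
\int_{\R^3_+}\na u\cdot\na\varphi+\int_{\R^3_+}e_3\times u\cdot\varphi=\int_{\R^2}v_0\,\mathcal F^{-1}\bigl({}^t\bar M_{SC}\,\hat\varphi|_{x_3=0}\bigr),
\qquad
\langle\DtoN v_0,\eta\rangle=\int_{\R^2}v_0\,\mathcal F^{-1}\bigl({}^t\bar M_{SC}\,\hat\eta\bigr).
\]
The first of these is obtained from the duality identity \eqref{duality} (already proved in Corollary~\ref{cor:ex/uni-SC-uloc}) by computing $\int e_3\times u\cdot\varphi$ and $\int\na u\cdot\na\varphi$ separately, integrating by parts in $x_3$ on the Fourier side, and using the algebraic relations of Lemma~\ref{lem:eq:L_k} on the matrices $L_k$ and vectors $q_k$. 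The second is proved in the same spirit as \eqref{duality}. Equality of the two right-hand sides gives the lemma immediately, with no splitting required.

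Your Step~2, by contrast, asks you to compute the boundary stress $(-\pa_3(u-\tilde u)+(p-\tilde p)e_3)|_{x_3=0}$ from the far-field part of the solution representation \eqref{def:u:rep-form} and then match it, term by term, with the far-field part of the $\DtoN$ representation in Proposition~\ref{prop:def-DN}. These two representations are organised around different decompositions: the solution uses the kernels $\varphi_{HF},\psi_1,\psi_2$ and the operators $\mathcal I[M_k^1],\mathcal I[N_k^1]$ built from $L_k$, while $\DtoN$ uses $M_S,\bar M,M^{rem}_{HF},K^{rem}_i$ and $\mathcal I[M_i]$ built from $M_{SC}$. Showing that differentiating the first and evaluating at $x_3=0$ reproduces the second is a genuine computation (and you also need a representation for the pressure, which the paper has not written out explicitly in the $\bK$ setting). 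You flag this as the ``main obstacle'' but do not carry it out; the paper's duality trick sidesteps it entirely by expressing both sides as the same pairing with $v_0$.

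For Step~3, your density argument is fine in spirit and close to what the paper implicitly intends; note however that since $v_0,V_h$ are compactly supported you are in the $H^{1/2}$ setting of Step~1, so one may alternatively check $\langle\DtoN v_0,v_0\rangle=\|\na u\|_{L^2}^2$ directly on the Fourier side without constructing approximating $\varphi_n$.
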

\begin{remark}
If $\varphi\in \mathcal C^\infty_0\left(\overline{\R^3_+}\right)^3$ is such that $\na \cdot \varphi=0$, then in particular
\begin{eqnarray*}
\varphi_{3|x_3=0}(x_h)&=&-\int_0^\infty\pa_3 \varphi_3(x_h,z)\:dz\\
&=&\int_0^\infty \na_h\cdot \varphi_h(x_h,z)=\na_h \cdot \Phi_h
\end{eqnarray*}
for  $\Phi_h:=\int_0^\infty\varphi_h(\cdot, z)dz\in\mathcal C^\infty_0(\R^2)$. In particular $\varphi|_{x_3=0}$ is a suitable test function for Proposition \ref{prop:def-DN}.
\end{remark}

\begin{proof}
The proof relies on two duality formulas in the spirit of \eqref{duality}, one for the Stokes-Coriolis system and the other for the Dirichlet to Neumann operator. We claim that if $v_0\in \bK$, then on the one hand
\be\label{duality-system-SC}
\int_{\R^3_+}\na u \cdot \na \varphi + \int_{\R^3_+} e_3\times u\cdot \varphi=\int_{\R^2} v_0\mathcal F^{-1}\left(\prescript{t}{}{\bar M_{SC}(\xi)} \hat \varphi|_{x_3=0}(\xi)\right)
\ee
and on the other hand, for any $\eta\in \mathcal C^\infty_0(\R^2)^3$ such that $\eta_3=\na_h\cdot\theta_h$ for some $\theta_h\in \mathcal C^\infty_0(\R^2)^2$,
\be\label{duality-DN}
\langle \DtoN(v_0),\eta\rangle_{\mathcal D',\mathcal D}=\int_{\R^2} v_0 \mathcal F^{-1}\left( \prescript{t}{}{\bar M_{SC}(\xi)} {\hat \eta(\xi)}\right).\ee
Applying formula \eqref{duality-DN} with $\eta=\varphi|_{x_3=0}$ then yields the desired result. Once again, the proofs of \eqref{duality-system-SC}, \eqref{duality-DN} are close to the one of \eqref{duality}. From \eqref{duality}, one has 
\begin{eqnarray*}
\int_{\R^3_+} e_3\times u \cdot \varphi&=&- \int_{\R^3_+}  u \cdot e_3\times\varphi\\
&=&-\int_{\R^2} v_0 \mathcal F^{-1}\left(\int_0^\infty \sum_{k=1}^3\exp(-\bar \lambda_k x_3)\prescript{t}{}{\bar L_k}  e_3\times{\hat \varphi}\right)\\
&=&\int_{\R^2} v_0\mathcal F^{-1}\left(\int_0^\infty \sum_{k=1}^3\exp(-\bar \lambda_k x_3)\prescript{t}{}{\bar L_k}\begin{pmatrix}
0&1&0\\-1&0&0\\0&0&0
\end{pmatrix}{\hat \varphi}\right).
\end{eqnarray*}
Moreover, we deduce from the representation formula for $u$ and from Lemma \ref{lem:propriete-I} a representation formula for $\na u$: we have
\begin{eqnarray*}
\na u(x)&=&\mathcal{F}^{-1}\left(\sum_{k=1}^3\exp(-\lambda_k x_3) L_k(\xi) \begin{pmatrix}
\widehat{\chi v_{0,h}}\\ \widehat{\na \cdot(\chi V_h)}
\end{pmatrix}\begin{pmatrix}
i\xi_1 & i\xi_2 & -\lambda_k
\end{pmatrix}
\right)(x)\\
&+& \sum_{k=1}^3\mathcal{ I}[M_k^1]\na f_k(\cdot,x_3)\ast((1-\chi) v_{0,h})(x)\\
&+& \sum_{k=1}^3\mathcal{ I}[N_k^1]\na f_k(\cdot,x_3)\ast((1-\chi) V_{h})(x)\\
&+& \na \varphi_{HF}\ast \begin{pmatrix}
{(1-\chi) v_{0,h}}(\xi)\\ {\na\cdot((1-\chi) V_h)}
\end{pmatrix}\\
&+& \na\psi_1\ast ((1-\chi) v_{0,h})(x) + \na\psi_2\ast ((1-\chi) V_h)(x).
\end{eqnarray*}
Then, proceeding exactly as in the proof of Corollary \ref{cor:ex/uni-SC-uloc}, we infer that
\begin{eqnarray*}
\int_{\R^3_+}\na u \cdot \na \varphi 
&=&\int_{\R^2} v_0\mathcal{F}^{-1}\left( \sum_{k=1}^3\int_0^\infty|\xi|^2 \exp(-\bar \lambda_k x_3)\prescript{t}{}{\bar L_k}{\hat \varphi(\xi,x_3) dx_3}\right)\\
&-&\int_{\R^2} v_0\mathcal{F}^{-1}\left( \sum_{k=1}^3\int_0^\infty\bar \lambda_k \exp(-\bar \lambda_k x_3)\prescript{t}{}{\bar L_k}{\pa_3\hat\varphi(\xi,x_3) dx_3}\right).
\end{eqnarray*}
Integrating by parts in $x_3$, we obtain
$$\int_0^\infty \exp(-\bar \lambda_k x_3)\prescript{t}{}{\bar L_k}{\pa_3\hat\varphi(\xi,x_3) dx_3}
=\bar \lambda_k\int_0^\infty \exp(-\bar \lambda_k x_3)\prescript{t}{}{\bar L_k}{\hat \varphi(\xi,x_3) dx_3}
-\prescript{t}{}{\bar L_k}{\hat \varphi|_{x_3=0}(\xi)}.
$$
Gathering the terms, we infer
\begin{eqnarray*}
\int_{\R^3_+}\na u \cdot \na \varphi + \int_{\R^3_+} e_3\times u\cdot \varphi
&=&\int_{\R^2} v_0\mathcal F^{-1}\left(\int_0^\infty\sum_{k=1}^3  \exp(-\bar \lambda_k x_3)\prescript{t}{}{\bar P_k} \hat \varphi\right)\\
&+&\int_{\R^2} v_0\mathcal F^{-1}\left(\sum_{k=1}^3 \bar \lambda_k \prescript{t}{}{\bar L_k} \hat \varphi|_{x_3=0} \right),
\end{eqnarray*}
where
\begin{eqnarray*}
P_k&:=&(|\xi|^2-\lambda_k^2)L_k + \begin{pmatrix}
0&-1&0\\1&0&0\\0&0&0
\end{pmatrix}L_k\\
&=&-\begin{pmatrix}
i\xi_1\\i\xi_2\\-\lambda_k
\end{pmatrix}
\begin{pmatrix}
q_{k,1}&q_{k,2}&q_{k,3}
\end{pmatrix}
\end{eqnarray*}
according to Lemma \ref{lem:eq:L_k}. Therefore, since $\varphi$ is divergence-free, we have
$$
\prescript{t}{}{\bar P_k} \hat \varphi=(-\pa_3 \hat\varphi_3 + \bar \lambda_k \hat \varphi_3)\begin{pmatrix}
\bar q_{k,1}\\ \bar q_{k,2}\\ \bar q_{k,3}
\end{pmatrix},
$$
so that eventually, after integrating by parts once more in $x_3$,
\begin{eqnarray*}
&&\int_{\R^3_+}\na u \cdot \na \varphi + \int_{\R^3_+} e_3\times u\cdot \varphi\\
&=&\int_{\R^2} v_0\mathcal F^{-1}\left(\left[\sum_{k=1}^3 \bar \lambda_k \prescript{t}{}{\bar L_k} +\begin{pmatrix}
\bar q_{k,1}\\ \bar q_{k,2}\\ \bar q_{k,3}
\end{pmatrix}\prescript{t}{}{e_3} \right] \hat \varphi|_{x_3=0} \right)\\
&=&\int_{\R^2} v_0\mathcal F^{-1}\left(\prescript{t}{}{\bar M_{SC}} \hat \varphi|_{x_3=0}\right).
\end{eqnarray*}
The derivation of \eqref{duality-DN} is very similar to the one of \eqref{duality} and therefore we skip its proof.
\end{proof}

We conclude this paragraph with some estimates on the Dirichlet to Neumann operator:
\begin{lemma}
\label{lem:est-DN}
There exists a positive constant $C$ such that the following property holds.\\
Let $\varphi\in \mathcal C^\infty_0(\R^2)^3$ such that $\varphi_3=\na_h\cdot \Phi_h$ for some $\Phi_h\in \mathcal C^\infty_0(\R^2)$, and let $v_0\in \bK$ with $v_{0,3}=\na_h\cdot V_h$. Let $R\geq 1$ and $x_0\in \R^2$ such that 
$$
\supp \varphi \cup \supp \Phi_h \subset B(x_0,R).
$$
Then
$$\left|\langle \DtoN(v_0),\varphi\rangle_{\mathcal D',\mathcal D} \right|
\leq C R\left(\|\varphi\|_{H^{1/2}(\R^2)}+\|\Phi_h\|_{H^{1/2}(\R^2)}\right)\left(\|v_0\|_{H^{1/2}_{uloc}} +\|V_h\|_{H^{1/2}_{uloc}}\right).
$$
Moreover, if $v_0,V_h\in H^{1/2}(\R^2)$, then
$$\left|\langle \DtoN(v_0),\varphi\rangle_{\mathcal D',\mathcal D} \right|
\leq C \left(\|\varphi\|_{H^{1/2}(\R^2)}+\|\Phi_h\|_{H^{1/2}(\R^2)}\right)\bigl(\|v_0\|_{H^{1/2}} +\|V_h\|_{H^{1/2}}\bigr).
$$
\end{lemma}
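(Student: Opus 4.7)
\medskip
\noindent\textbf{Proof plan.} The plan is to apply the representation formula of Proposition \ref{prop:def-DN} and estimate each of the resulting terms separately. First I would pick the cutoff $\chi\in\mathcal C^\infty_0(\R^2)$ so that $\chi\equiv 1$ on the $1$-neighbourhood of $\supp \varphi\cup\supp \Phi_h$, with $\supp\chi\subset B(x_0,R+2)$ and $\|\chi\|_{W^{1,\infty}}$ bounded independently of $R$ and $x_0$. Three elementary inequalities will be used throughout:
$$
\|\chi u\|_{L^2}\leq CR\,\|u\|_{L^2_{uloc}},\quad \|\chi u\|_{H^{1/2}}\leq CR\,\|u\|_{H^{1/2}_{uloc}},\quad \|\varphi\|_{L^1}\leq CR\,\|\varphi\|_{L^2}.
$$
The second is obtained by splitting the Gagliardo seminorm of $\chi u$ into the regions $|x-y|\leq 1$ and $|x-y|>1$, using the decomposition $\chi u(x)-\chi u(y)=(\chi(x)-\chi(y))u(x)+\chi(y)(u(x)-u(y))$ and summing over the $O(R^2)$ unit cubes meeting $B(x_0,R+2)$; the third is simply Cauchy--Schwarz on the support of $\varphi$.

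Next I would handle the \emph{local} terms of the representation formula (those involving $\widehat{\chi v_0}$ or $\widehat{\chi V_h}$) by Fourier-space duality. The Stokes piece $\langle \DtoN_S v_0,\varphi\rangle$ is estimated exactly as in \cite{DGVNMnoslip}: the local singular part is dominated by $\|\chi v_0\|_{H^{1/2}}\|\varphi\|_{H^{1/2}}$, and the $K_S$-part uses the $|t|^{-3}$ decay of the Stokes kernel. The high-frequency correction $(1-\phi)(M_{SC}-M_S-\bar M)$ is an order-$\tfrac{1}{3}$ symbol by Lemma \ref{lem:deriv3DN}, hence continuous $H^{1/2}\to H^{-1/2}$, and duality against $\varphi$ yields $\leq CR\|v_0\|_{H^{1/2}_{uloc}}\|\varphi\|_{H^{1/2}}$. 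The constant matrix $\bar M$ is trivially $L^2$-bounded. The delicate piece is the low-frequency block $\phi(\xi)(M^{rem}+\begin{pmatrix}M_1 & V_1\\\prescript{t}{}{V_2} & |\xi|^{-1}\end{pmatrix})$: when applied to the vector $(\widehat{\chi v_{0,h}},\,i\xi\cdot\widehat{\chi V_h})^{\mathrm t}$, the $|\xi|^{-1}$ singularity in the $(3,3)$ entry is precisely compensated by the factor $i\xi$ supplied by the divergence structure $v_{0,3}=\na_h\cdot V_h$, producing a bounded multiplier on $\supp\phi$. Hence this contribution is bounded on $L^2$, and $L^2$--$L^2$ duality with $\varphi$ gives $CR(\|v_0\|_{L^2_{uloc}}+\|V_h\|_{L^2_{uloc}})\|\varphi\|_{H^{1/2}}$.

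For the \emph{nonlocal} terms (involving $(1-\chi)v_0$ or $(1-\chi)V_h$), I would rely on pointwise kernel decay. The kernels $M^{rem}_{HF}$ and $K^{rem}_i$ ($i=1,\dots,4$) all satisfy $|K(x_h)|\leq C|x_h|^{-3}$, so for $x\in\supp\varphi\cup\supp\Phi_h$ the convolution
$$
|K\ast((1-\chi)u)(x)|\leq C\int_{|y|\geq 1}\frac{|u(x-y)|}{|y|^3}\,dy\leq C\|u\|_{L^2_{uloc}}
$$
is bounded uniformly in $x$ and in $R$. Pairing with $\varphi$ or $\Phi_h$ in $L^1$ and invoking the third preliminary inequality gives contributions bounded by $CR\|\varphi\|_{H^{1/2}}\|u\|_{L^2_{uloc}}$. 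The $\mathcal I[M_i]$ operators acting on $\rho\ast(1-\chi)u$ are bounded in $L^\infty$ by $C\|u\|_{L^2_{uloc}}$ thanks to Lemma \ref{lemauxorder1} and Remark \ref{rem:convol}, and the same $L^1$--$L^\infty$ pairing concludes.

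Summing all contributions yields the first inequality. For the second inequality (with $v_0,V_h\in H^{1/2}$), the localization by $\chi$ becomes unnecessary and one directly invokes the global continuity statements of Proposition \ref{prop:dev-MSC} (namely $\DtoN_h:H^{1/2}\to H^{-1/2}$, together with the high-frequency boundedness of $\DtoN_3$ and the absorption of its low-frequency singularity by the factor $i\xi$ from $v_{0,3}=\na_h\cdot V_h$), in place of the $R$-dependent inequalities on $\|\chi u\|$. The main obstacle that I anticipate is precisely the low-frequency block: one has to track carefully that the $|\xi|^{-1}$ singularity of $M_{SC}$ near the origin is systematically absorbed by the divergence structure defining $\bK$, both on the data side (via $V_h$) and on the test-function side (via $\Phi_h$); once this cancellation bookkeeping is done, the remaining estimates reduce to the routine $H^{1/2}$--$H^{-1/2}$ and $L^1$--$L^\infty$ duality arguments described above.
\medskip
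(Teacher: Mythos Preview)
Your proposal is correct and follows essentially the same approach as the paper: both apply the representation formula of Proposition~\ref{prop:def-DN} with a cutoff $\chi$ supported in $B(x_0,R+2)$, group the terms into (i) kernel-type nonlocal terms with $|K|\lesssim|x_h|^{-3}$ handled by $L^1$--$L^\infty$ pairing, (ii) the $\mathcal I[M_i]$ terms handled by Lemma~\ref{lemauxorder1} and Remark~\ref{rem:convol}, and (iii) local Fourier-multiplier terms handled by $H^{1/2}$--$H^{-1/2}$ duality, with the crux being the estimate $\|\chi v_0\|_{H^{1/2}}\leq CR\|v_0\|_{H^{1/2}_{uloc}}$. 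Your sketch of this last inequality via splitting the Gagliardo seminorm at scale $1$ and summing over $O(R^2)$ unit cubes is equivalent to the paper's partition-of-unity argument with $\tau_k\vartheta$.
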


\begin{proof}
The second inequality is classical and follows from the Fourier definition of the Dirichlet to Neumann operator. We therefore focus on the first inequality, for which we use the representation formula of Proposition \ref{prop:def-DN}.

We consider a truncation function $\chi$ such that $\chi\equiv 1$ on $B(x_0,R+1)$ and $\chi\equiv 0$ on $B(x_0,R+2)^c$ , and such that $\|\na^\alpha \chi\|_\infty\leq C_\alpha$, with $C_\alpha$ independent of $R$, for all $\alpha\in \N$. We must evaluate three different types of term:

\noindent$\triangleright$ Terms of the type
$$
\int_{\R^2}K\ast ((1-\chi) v_0)\cdot \varphi,
$$
where $K$ is a matrix such that $|K(x)|\leq C|x|^{-3}$ for all $x\in \R^2$ (of course, we include in the present discussion all the variants involving $V_h$ and $\Phi_h$). These terms are bounded by
\begin{eqnarray*}
&&C\int_{\R^2\times \R^2}\frac{1}{|t|^3}|1-\chi(x-t)|\: | v_0(x-t)|\: |\varphi(x)|\:dx\:dt\\
&\leq & C \int_{\R^2}dx\: |\varphi(x)|\:\left(\int_{|t|\geq 1}\frac{| v_0(x-t)|^2}{|t|^3}dt\right)^{1/2}\left(\int_{|t|\geq 1}\frac{1}{|t|^3}dt\right)^{1/2}\\
&\leq & C\|v_0\|_{L^2_{uloc}}\|\varphi\|_{L^1}\\
&\leq & C R \|v_0\|_{L^2_{uloc}}\|\varphi\|_{L^2}.
\end{eqnarray*}

\noindent$\triangleright$ Terms of the type
$$
\int_{\R^2}\varphi_h\cdot \mathcal{I}[M]((1-\chi)v_{0,h})\ast \rho,
$$
where $M$ is a $2\times 2$ matrix whose coefficients are linear combinations of $\xi_i\xi_j/|\xi|$. Using Lemma \ref{lemauxorder1} and  Remark \ref{rem:convol}, these terms are bounded by
$$
C\|\varphi\|_{L^1}\|v_0\|_{L^2_{uloc}}\|(1+|\cdot|^2)\rho\|_{L^2}^{1/2}\|(1+|\cdot|^2)\na^2\rho\|_{L^2}^{1/2}.
$$
Using Plancherel's Theorem, we have (up to a factor $2\pi$)
$$
\begin{aligned}
\|(1+|\cdot|^2)\rho\|_{L^2}&=\|(1-\Delta)\phi\|_{L^2(\R^2)}\leq C,\\
\|(1+|\cdot|^2)\na^2\rho\|_{L^2}&=\|(1-\Delta)|\cdot|^2\phi\|_{L^2(\R^2)}\leq C,
\end{aligned}
$$
so that eventually
$$\left|\int_{\R^2}\varphi_h\cdot \mathcal{I}[M]((1-\chi)v_{0,h})\ast \rho\right|\leq C \|\varphi\|_{L^1}\|v_0\|_{L^2_{uloc}}\leq C R \|v_0\|_{L^2_{uloc}}\|\varphi\|_{L^2}.
$$

\noindent$\triangleright$ Terms of the type
$$
\langle \mathcal F^{-1}\left(M(\xi)\widehat{\chi v_0}(\xi)\right), \varphi\rangle_{H^{-1/2}, H^{1/2}}\text{ and } \int_{\R^2}\varphi\cdot \bar M v_{0}
$$
where $M(\xi)$ is some kernel such that $\mathrm{Op}(M):H^{1/2}(\R^2)\to H^{-1/2}(\R^2)$ and $\bar M$ is a constant matrix.

All these terms are bounded by
$$
C\|\chi v_0\|_{H^{1/2}(\R^2)}\|\varphi\|_{H^{1/2}(\R^2)}.
$$
In fact, the trickiest part of the Lemma is to prove that
\be\label{est:u-tronque-H12}
\|\chi v_0\|_{H^{1/2}(\R^2)}\leq C R \|v_0\|_{H^{1/2}_{uloc}}.
\ee
To that end, we recall that
$$
\|\chi v_0\|_{H^{1/2}(\R^2)}^2=\|\chi v_0\|_{L^2(\R^2)}^2 + \int_{\R^2\times \R^2} \frac{|(\chi v_0)(x)-(\chi v_0)(y) |^2}{|x-y|^3}dx\:dy.
$$
We consider a cut-off function $\vartheta$ satisfying \eqref{hyp:chi}, so that
\begin{eqnarray*}
\|\chi v_0\|_{L^2(\R^2)}^2&\leq & \sum_{k\in \Z^2} \| (\tau_k \vartheta ) \chi v_0\|_{L^2}^2\\
&\leq & \|\chi\|_\infty^2 \sum_{\substack{k\in \Z^2,\\|k|\leq CR}}\| (\tau_k \vartheta ) v_0\|_{L^2}^2\\
&\leq & C R^2 \|\chi\|_\infty^2 \sup_k \| (\tau_k \vartheta ) v_0\|_{L^2}^2.
\end{eqnarray*}
%so that, using Lemma 3.1 in \cite{ABZ}, $\|\chi v_0\|_{L^2(\R^2)}\leq CR \|v_0\|_{L^2_{uloc}}$. 
Concerning the second term, 
\begin{eqnarray*}
&&|\chi v_0(x)- \chi v_0(y)|^2\\&=&\left(\sum_{k\in \Z^2} \tau_k \vartheta(x) \chi(x) v_0(x) - \tau_k \vartheta(y) \chi (y)v_0(y) \right)^2\\
&=&\sum_{\substack{k,l\in \Z^2,\\|k-l|\leq 3} }\left[\tau_k \vartheta(x) \chi(x) v_0(x) - \tau_k \vartheta(y) \chi (y)v_0(y) \right]\left[\tau_l \vartheta(x) \chi(x) v_0(x) - \tau_l \vartheta(y) \chi (y)v_0(y) \right]\\
&&+\sum_{\substack{k,l\in \Z^2,\\|k-l|> 3}} \left[\tau_k \vartheta(x) \chi(x) v_0(x) - \tau_k \vartheta(y) \chi (y)v_0(y) \right]\left[\tau_l \vartheta(x) \chi(x) v_0(x) - \tau_l \vartheta(y) \chi (y)v_0(y) \right].
\end{eqnarray*}
Notice that according to the assumptions on $\vartheta$, if $|k-l|>3$, then $\tau_k \vartheta(x)\tau_l \vartheta(x)=0$ for all $x\in \R^2$. Moreover, if  $\tau_k(x)\tau_l(y)\neq 0$, then $|x-y|\geq |k-l|-2$.
Notice also that the first sum above contains $O(R^2)$ non zero terms. Therefore, using the Cauchy-Schwartz inequality, we infer that
\begin{eqnarray*}
&& \int_{\R^2\times \R^2} \frac{|(\chi v_0)(x)-(\chi v_0)(y) |^2}{|x-y|^3}dx\:dy\\
&\leq & C R^2 \sup_{k\in \Z^2} \int_{\R^2\times \R^2} \frac{|(\tau_k \vartheta\chi v_0)(x)-(\tau_k \vartheta\chi v_0)(y) |^2}{|x-y|^3}dx\:dy\\
&&+ \sum_{\substack{k,l\in \Z^2,\\|k-l|> 3}} \frac{1}{(|k-l|-2)^3}\int_{\R^2\times \R^2} |\tau_k \vartheta(x) \chi(x) v_0(x)| |\tau_l \vartheta(y) \chi (y)v_0(y)|\:dx\:dy
\end{eqnarray*}
Using \eqref{est:H12-tronque}, the first term is bounded by
$$
C R^2 \|\chi\|_{W^{1,\infty}}^2 \|v_0\|_{H^{1/2}_{uloc}}^2,
$$
while the second is bounded by $C \|v_0\|_{L^2_{uloc}}^2$.

Gathering all the terms, we obtain \eqref{est:u-tronque-H12}. 
This concludes the proof of the Lemma.
\end{proof}

\subsection{Presentation of the new system}

We now come to our main concern in this paper, which is to prove the existence of weak solutions to the linear system of rotating fluids in the bumpy half-space \eqref{SC}.
There are two features which make this problem particularly difficult. Firstly, the fact that the bottom is now bumpy rather than flat prevents us from the use of the Fourier transform in the tangential direction. Secondly, as the domain $\Omega$ is unbounded, it is not possible to rely on Poincar\'e type inequalities. We face this problem using an idea of \cite{DGVNMnoslip}. It consists in defining a problem equivalent to \eqref{SC} yet posed in the bounded channel $\Omega^b$, by the mean of a transparent boundary condition at the interface $\Sigma=\{x_3=0\}$, namely
\be\label{SC-Omb}
\left\{\begin{array}{rll}
-\Delta u + e_3\times u + \na p&=0& \text{in }\Omb,\\
\dv u&=0&\text{in }\Omb,\\
u|_{\Gamma}&=u_0,&\\
-\pa_3 u + p e_3&=\DtoN(u|_{x_3=0})&\text{on }\Sigma.
\end{array}\right.
\ee
In the system above and throughout the rest of the paper, we assume without any loss of generality that $\sup \om <0$, $\inf \om\geq -1$.
Notice that thanks to assumption \eqref{compatibility}, we have
\begin{eqnarray*}
u_{3|x_3=0}(x_h)&=&u_{0,3}(x_h)-\int_{\om(x_h)}^0 \na_h\cdot u_h(x_h,z)\:dz\\
&=&u_{0,3}(x_h)-\na_h\om \cdot u_{0,h}(x_h)\\
&&-\na_h\cdot\int_{\om(x_h)}^0 u_h(x_h,z)\:dz\\
&=&\na_h\cdot \left(U_h(x_h) -\int_{\om(x_h)}^0 u_h(x_h,z)\:dz  \right),
\end{eqnarray*}
so that $u_{3|x_3=0}$ satisfies the assumptions of Proposition \ref{prop:def-DN}.

Let us start by explaining the meaning of \eqref{SC-Omb}:
\begin{definition}
A function $u\in H^1_{uloc}(\Omb)$ is a solution of \eqref{SC-Omb} if it satisfies the bottom boundary condition $u|_{\Gamma}=u_0$ in the trace sense, and if, for all $\varphi\in \mathcal C^\infty_0\left(\overline{\Om_b}\right) $ such that $\na \cdot \varphi=0$ and $\varphi|_{\Gamma}=0$, there holds
$$
\int_{\Omb} (\na u \cdot \na \varphi + e_3 \times u \cdot \varphi)=-\langle \DtoN(u|_{x_3=0}), \varphi|_{x_3=0}\rangle_{\mathcal D',\mathcal D}.
$$
\label{def:sol-SC0}
\end{definition}

\begin{remark}\label{remfcttest}
Notice that if $\varphi\in \mathcal C^\infty_0\left(\overline{\Om_b}\right)$ is such that $\na \cdot \varphi=0$ and $\varphi|_{\Gamma}=0$, then
$$
\varphi_{3|x_3=0}=\na_h\cdot \Phi_h,\text{ where } \Phi_h(x_h):=-\int_{\om(x_h)}^0\varphi_h(x_h,z)dz\in \mathcal C^\infty_0(\R^2).
$$
Therefore $\varphi$ is an admissible test function for Proposition \ref{prop:def-DN}.

\end{remark}

We then have  the following result, which is the Stokes-Coriolis equivalent of \cite[Proposition 9]{DGVNMnoslip}, and which follows easily from Lemma \ref{lem:lien-SC/DN} and Corollary \ref{cor:ex/uni-SC-uloc}:

\begin{proposition}
Let $u_0\in L^2_{uloc}(\R^2)$ satisfying \eqref{compatibility}, and assume that $\om\in W^{1,\infty}(\R^2)$.

\begin{itemize}
\item Let $(u,p)$ be a solution of \eqref{SC} in $\Om$ such that $u\in H^1_{loc}(\Om)$ and
$$
\begin{aligned}
\forall a>0,\quad \sup_{l\in \Z^2}\int_{l+[0,1]^2}\int_{\om(x_h)}^a (| u|^2 + |\na u|^2)<\infty,\\
\sup_{l\in \Z^2}\int_{l+[0,1]^2}\int_1^\infty |\na^q u|^2<\infty,
\end{aligned}
$$
for some $q\in \N$, $q\geq 1$.

Then $u|_{\Omb}$ is a solution of \eqref{SC-Omb}, and for $x_3>0$, $u$ is given by \eqref{def:u:rep-form}, with $v_0:=u|_{x_3=0}\in \bK$.

\item Conversely, let $u^-\in H^1_{uloc}(\Omb)$ be a solution of \eqref{SC-Omb}, and let $v_0:=u^-|_{x_3=0}\in \bK$. Consider the function $u^+\in H^1_{loc}(\R^3_+)$ defined by \eqref{def:u:rep-form}. Then, setting
$$
u(x):=\left\{ \begin{array}{ll}
u^-(x)&\text{ if }\om(x_h)<x_3<0,\\
u^+(x)&\text{ if }x_3>0,
\end{array}\right. 
$$
the function $u\in H^1_{loc}(\Om)$ is such that
$$
\begin{aligned}
\forall a>0,\quad \sup_{l\in \Z^2}\int_{l+[0,1]^2}\int_{\om(x_h)}^a (| u|^2 + |\na u|^2)<\infty,\\
\sup_{l\in \Z^2}\int_{l+[0,1]^2}\int_{1}^\infty |\na^q u|^2<\infty,
\end{aligned}
$$
for some $q\in \N$ sufficiently large, and is a solution of \eqref{SC}.

\end{itemize}

\end{proposition}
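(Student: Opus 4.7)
The plan is to prove the two directions separately, using Lemma \ref{lem:lien-SC/DN} and Corollary \ref{cor:ex/uni-SC-uloc} as the main tools. In both cases, the key observation is that if $u$ is divergence-free in $\Omega$ with $u|_\Gamma=u_0$, then the trace $v_0:=u|_{x_3=0}$ automatically lies in the space $\bK$: the horizontal component is in $H^{1/2}_{uloc}(\R^2)$ by a standard trace estimate on each cube $l+[0,1]^2$, and the vertical component satisfies
\[
v_{0,3}(x_h) = u_{0,3}(x_h) - \na_h\om\cdot u_{0,h}(x_h) - \na_h\cdot\int_{\om(x_h)}^0 u_h(x_h,z)\,dz = \na_h\cdot W_h(x_h),
\]
where $W_h:=U_h-\int_{\om(\cdot)}^0 u_h(\cdot,z)\,dz\in H^{1/2}_{uloc}(\R^2)^2$ thanks to the compatibility assumption \eqref{compatibility} and the local bound on $u$.

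For the forward direction, restrict $u$ to $\R^3_+$. The function $u^+:=u|_{\R^3_+}$ solves the Stokes-Coriolis system \eqref{SC-R3+} with boundary value $v_0\in\bK$, and satisfies the integrability bounds \eqref{est:SC-existence} thanks to the assumptions on $u$ and a Poincar\'e inequality near $\Sigma$. By the uniqueness part of Corollary \ref{cor:ex/uni-SC-uloc}, $u^+$ coincides with the solution produced by the representation formula \eqref{def:u:rep-form}. To show that $u^-:=u|_{\Omb}$ solves \eqref{SC-Omb} in the sense of Definition \ref{def:sol-SC0}, pick a test function $\varphi\in\mathcal C^\infty_0(\overline{\Omb})$ with $\na\cdot\varphi=0$ and $\varphi|_\Gamma=0$. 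Extend it by zero to a distribution on $\R^3_+$ (this extension is not smooth across $\Sigma$), and construct a smooth divergence-free extension $\tilde\varphi\in\mathcal C^\infty_0(\overline{\R^3_+})$ whose trace on $\Sigma$ equals $\varphi|_\Sigma$ (this is possible because $\varphi_3|_\Sigma=\na_h\cdot\Phi_h$ with $\Phi_h:=-\int_{\om(\cdot)}^0\varphi_h(\cdot,z)dz\in\mathcal C^\infty_0(\R^2)$, as noted in Remark \ref{remfcttest}). Write the weak formulation of \eqref{SC} in $\Om$ tested against the glued field $\mathbf 1_{\Omb}\varphi+\mathbf 1_{\R^3_+}\tilde\varphi$, and apply Lemma \ref{lem:lien-SC/DN} to $u^+$ tested against $\tilde\varphi$; the two half-space contributions combine to yield exactly
\[
\int_{\Omb}(\na u\cdot\na\varphi + e_3\times u\cdot\varphi) = -\langle \DtoN(v_0),\varphi|_{\Sigma}\rangle,
\]
which is the desired variational identity.

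For the converse, define $u$ by gluing $u^-$ and $u^+$ along $\Sigma$. Both pieces have the same trace $v_0\in\bK$ on $\Sigma$, so $u\in H^1_{loc}(\Om)$; the divergence-free condition extends across $\Sigma$ because the normal component of both traces coincides. The integrability bounds on $u^-$ follow from $u^-\in H^1_{uloc}(\Omb)$, and the bounds on $\na^q u^+$ come directly from Proposition \ref{prop:est-SC-uloc} applied to $v_0$. To verify that $u$ is a distributional solution of \eqref{SC} in $\Om$, test against an arbitrary $\psi\in\mathcal C^\infty_0(\Om)^3$ with $\na\cdot\psi=0$; split the integral into contributions from $\Omb$ (after first truncating $\psi$ suitably so that it vanishes on $\Gamma$, which is automatic since $\psi$ has compact support in $\Om$ and $\Gamma\subset\pa\Om$) and from $\R^3_+$. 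Apply the variational identity of Definition \ref{def:sol-SC0} to the lower piece and Lemma \ref{lem:lien-SC/DN} to $u^+$ against $\tilde\psi$: the two Dirichlet-to-Neumann contributions arising on $\Sigma$ cancel exactly, yielding $\int_\Om(\na u\cdot\na\psi+e_3\times u\cdot\psi)=0$. The existence of an associated pressure $p$ follows from De Rham's theorem, with the possible jump of $p$ across $\Sigma$ absorbed into the Dirichlet to Neumann term (this is built into the definition $\DtoN v_0=-\pa_3 u|_{\Sigma}+p|_\Sigma e_3$).

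The main obstacle, and the most delicate point, is the matching at $\Sigma$ in the converse direction: one must be sure that the transparent boundary condition encoded through $\DtoN$ in \eqref{SC-Omb} precisely captures both the $-\pa_3 u$ and the $pe_3$ contributions from the half-space side, including the possible jump of the pressure. The structure of the definition of $\DtoN$ via Proposition \ref{prop:def-DN} and the duality identity \eqref{duality-DN} ensure this cancellation, which is exactly the content of Lemma \ref{lem:lien-SC/DN}; so the reduction of the whole problem to verifying that lemma carefully, on test fields that are admissible on both sides of $\Sigma$ simultaneously, is the crux of the argument.
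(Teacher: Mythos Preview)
Your proof is correct and follows essentially the same approach the paper indicates: the paper does not give a detailed argument, merely stating that the result ``follows easily from Lemma \ref{lem:lien-SC/DN} and Corollary \ref{cor:ex/uni-SC-uloc}'', and you have filled in precisely those details. Your identification of why $v_0\in\bK$ via the compatibility condition, the use of uniqueness in Corollary \ref{cor:ex/uni-SC-uloc} to recover the representation formula in $\R^3_+$, and the gluing argument using Lemma \ref{lem:lien-SC/DN} to match the Dirichlet-to-Neumann contributions across $\Sigma$ are all exactly what the paper has in mind.
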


As a consequence, we work with the system \eqref{SC-Omb} from now on. In order to have a homogeneous Poincar\'e inequality in $\Omb$, it is convenient to lift the boundary condition on $\Gamma$, so as to work with a homogeneous Dirichlet boundary condition. Therefore, we define $V=\left(V_h,V_3\right)$ by
\begin{equation*}
V_h:=u_{0,h},\quad V_3:=u_{0,3}-\nabla_h\cdot u_{0,h}\left(x_3-\omega(x_h)\right).
\end{equation*}
Notice that $V|_{x_3=0}\in \bK$ thanks to \eqref{compatibility}, and that $V$ is divergence free. By definition, the function
$$
\tu:=u-V\mathbf{1_{x\in \Omb}}
$$
is a solution of
\be\label{SC0}
\left\{\begin{array}{rll}
-\Delta \tu + e_3\times \tu + \na \tp&=f& \text{in }\Omb,\\
\dv \tu&=0& \text{in }\Omb,\\
\tu|_{\Gamma}&=0,&\\
-\pa_3 \tu + \tp e_3&=\DtoN(\tu|_{x_3=0^-})+ F,&\text{on }\Sigma\times\{0\}
\end{array}\right.
\ee
where
$$
\begin{aligned}
f&:=\Delta V - e_3\times V=\Delta_h V - e_3\times V,\\
F&:=\DtoN(V|_{x_3=0}) + \pa_3 V|_{x_3=0}.
\end{aligned}
$$
Notice that thanks to the regularity assumptions on $u_0$ and $\om$, we have, for all $l\in \N$ and for all $\varphi\in \mathcal C^\infty_0(\overline{\Omb})^3$ with $\Supp \varphi \subset ((-l,l)^2\times (-1,0))\cap \overline{\Omb}$, 
\begin{equation}\label{estf}
\left|\langle f,\varphi\rangle_{\mathcal D',\mathcal D}\right|\leq Cl(\left\|u_{0,h}\right\|_{H^2_{uloc}}+ \left\|u_{0,3}\right\|_{H^1_{uloc}})\left\|\varphi\right\|_{H^1\left(\Omega^b\right)}.
\end{equation}
where the constant $C$ depends only on $\|\om\|_{W^{1,\infty}}$. In a similar fashion, if $\varphi\in \mathcal C^\infty_0(\R^2)^3$ is such that $\varphi_3=\na_h\cdot \Phi_h$ for some $\Phi_h\in \mathcal C^\infty_0(\R^2)^2$, and if $\Supp \varphi, \Supp \Phi_h\subset B(x_0,l)$,
then according to Lemma \ref{lem:est-DN},
\be\label{estF}
\left|\langle F,\varphi\rangle_{\mathcal D',\mathcal D}\right| \leq Cl(\left\|u_{0,h}\right\|_{H^2_{uloc}}+ \left\|u_{0,3}\right\|_{H^1_{uloc}} + \|U_h\|_{H^{1/2}_{uloc}})\left(\|\varphi\|_{H^{1/2}(\R^2)}+\|\Phi_h\|_{H^{1/2}(\R^2)}\right).
\ee

\subsection{Strategy of the proof}
\label{ssec:strategy}

From now on, we drop the\ $\widetilde{~}$\ in \eqref{SC0} so as to lighten the notation.

$\bullet$ In order to prove the existence of solutions of \eqref{SC0} in $H^1_{uloc}(\Om)$, we truncate horizontally the domain $\Om$, and we derive uniform estimates on the solutions of the Stoke-Coriolis system in the truncated domains. More precisely, we introduce, for all $n\in \N$, $k\in \N$,
$$
\begin{aligned}
\Om_n&:=\Omb\cap\{x\in\R^3,\ |x_1|\leq n,\ x_2\leq n\},\\
\Om_{k,k+1}&:=\Om_{k+1}\setminus \Om_k,\\
\Sigma_n&:=\{(x_h,0)\in \R^3,\ |x_1|\leq n,\ x_2\leq n\},\\
\Sigma_{k,k+1}&:=\Sigma_{k+1}\setminus \Sigma_k,\\
\Gamma_n&:=\Gamma\cap\{x\in\R^3,\ |x_1|\leq n,\ x_2\leq n\}.
\end{aligned}
$$
We consider the Stokes-Coriolis system in $\Om_n$, with homogeneous boundary conditions on the lateral boundaries
\begin{equation}\label{SCblapp}
\left\{\begin{array}{rll}
-\Delta u_n+e_3\times u_n+\nabla p_n&=f,&x\in\Omega_n\\
\nabla\cdot u_n&=0,&x\in\Omega_n\\
u_n&=0,&x\in\Omb\setminus\Om_n\\
u_n&=0,&x\in \Gamma_n\\
-\partial_3u_n+p_ne_3\arrowvert_{x_3=0}&=\DtoN\left(u_n\arrowvert_{x_3=0}\right)+F,&x\in\Sigma_n.
\end{array}\right. 
\end{equation}
Notice that the transparent boundary condition involving the Dirichlet to Neumann operator only makes sense if $u_n|_{x_3=0}$ is defined on the whole plane $\Sigma$ (and not merely on $\Sigma_n$), due to the non-locality of the operator $\DtoN$. This accounts for the condition $u_n|_{\Omb\setminus \Om_n}=0$.

Taking $u_n$ as a test function  in \eqref{SCblapp}, we get a first energy estimate on $u_n$
\begin{eqnarray}
\nonumber&&\left\|\nabla u_n\right\|_{L^2(\Omega^b)}^2\\
\label{est:un}&=&\underbrace{-\left\langle \DtoN\left(u_n\arrowvert_{x_3=0}\right),u_n\arrowvert_{x_3=0}\right\rangle}_{\leq 0}-\left\langle F,u_n\arrowvert_{x_3=0}\right\rangle+\left\langle f,u_n\right\rangle\\
\nonumber&\leq& Cn\left(\left\|u_{n,h}\arrowvert_{x_3=0}\right\|_{H^{1/2}\left(\Sigma_n\right)}+\left\|\int_{\omega(x_h)}^0u_{n,h}(x_h,z')dz'\right\|_{H^{1/2}\left(\Sigma_n\right)}\right)+Cn\left\|u_n\right\|_{H^1\left(\Omega_n\right)}\\
\nonumber&\leq& Cn\left\|u_n\right\|_{H^1\left(\Omega_n\right)},
\end{eqnarray}
where the constant $C$ depends only on $\|u_0\|_{H^2_{uloc}}$ and $\|\om\|_{W^{1,\infty}}$. This implies, thanks to the Poincar\'e inequality,
\begin{equation}\label{estE_n}
E_n:=\int_{\Omega}\nabla u_n\cdot\nabla u_n\leq C_0n^2.
\end{equation}
The existence of $u_n$ in $H^1(\Omega^b)$ follows. Uniqueness is a consequence of equality \eqref{est:un} with $F=0$ and $f=0$.

In order to prove the existence of $u$, we will derive $H^1_{uloc}$ estimates on $u_n$, uniform with respect to $n$. Then, passing to the limit in \eqref{SCblapp} and in the estimates, we deduce the exi\-stence of a solution of \eqref{SC0} in $H^1_{uloc}(\Omb)$. In order to obtain $H^1_{uloc}$ estimates on $u_n$, we follow the strategy of G\'erard-Varet and Masmoudi in \cite{DGVNMnoslip}, which is inspired from the work of Ladyzhenskaya and Solonnikov \cite{LS}. We work with the energies
\begin{equation}\label{defE_k}
E_k:=\int_{\Omega_k}\nabla u_n\cdot\nabla u_n.
\end{equation}
The goal is to prove an inequality of the type
\be\label{est:E_k-simplifiee}
E_k\leq C\left(k^2 + (E_{k+1}-E_k)\right),\quad\forall k\in \{m,\ldots\ n\},
\ee
where $m\in \N$ is a large, but fixed integer (independent of $n$) and $C$ is a constant depending only on $\|\om\|_{W^{1,\infty}}$ and $\|u_{0,h}\|_{H^2_{uloc}}, \|u_{0,3}\|_{H^1_{uloc}}, \|U_h\|_{H^{1/2}_{uloc}}$.
Then, by backwards induction on $k$, we deduce that
$$
E_k\leq C k^2 \quad\forall k\in \{m,\ldots\ n\}
$$
so that $E_m$, in particular, is bounded, uniformly in $n$. Since the  derivation of  the energy estimates is invariant by translation in the horizontal variable, we infer that for all $n\in \N$,
$$
\sup_{c\in \mathcal C_m} \int_{(c\times(-1,0))\cap \Omb} |\na u_n|^2\leq C
$$
where 
\be\label{def:Cm}
\mathcal C_m:=\left\{c,\ \mbox{square of edge of length}\ m\ \mbox{contained in}\ \Sigma_n\ \mbox{with vertices in}\ \mathbb Z^2\right\}.\ee
Hence the uniform $H^1_{uloc}$ bound on $u_n$ is proved. As a consequence, by a diagonal argument, we can extract a subsequence $\left(u_{\psi(n)}\right)_{n\in \N}$ such that $u_{\psi(n)}\rightharpoonup u$ weakly in $H^1(\Om_k)$ and $u_{\psi(n)}|_{x_3=0}\rightharpoonup u|_{x_3=0}$ weakly in $H^{1/2}(\Sigma_k)$ for all $k\in \N$. Of course, $u$ is a solution of the Stokes-Coriolis system in $\Omb$, and $u\in H^1_{uloc}(\Omb)$. Looking closely at the representation formula in Proposition \ref{prop:def-DN}, we infer that
$$
\langle \DtoN u_{\psi(n)}|_{x_3=0}, \varphi \rangle_{\mathcal D',\mathcal D}\stackrel{n\rightarrow\infty}{\longrightarrow}\langle \DtoN u|_{x_3=0}, \varphi \rangle_{\mathcal D',\mathcal D} 
$$
for all admissible test functions $\varphi$. For instance, 
\begin{eqnarray*}
&&\int_{\R^2}\varphi M^{rem}_{HF}\ast (1-\chi )\left({u_{\psi(n)}}|_{x_3=0}-u|_{x_3=0}\right)\\
&=&\int_{\R^2} dx \int_{|t|\leq k} dt \:\varphi(x) M^{rem}_{HF}(x-t)(1-\chi )\left({u_{\psi(n)}}|_{x_3=0}-u|_{x_3=0}\right)(t)\\
&+& \int_{\R^2} dx \int_{|t|\geq k} dt \:\varphi(x) M^{rem}_{HF}(x-t)(1-\chi )\left({u_{\psi(n)}}|_{x_3=0}-u|_{x_3=0}\right)(t).
\end{eqnarray*}
For all $k$, the first integral vanishes as $n\to \infty$ as a consequence of the weak convergence in $L^2(\Sigma_k)$. 
As for the second integral, let $R>0$ such that $\Supp \varphi \subset B_R$, and let $k\geq R+1$. Then
\begin{eqnarray*}
&& \int_{\R^2} dx \int_{|t|\geq k} dt \varphi(x) M^{rem}_{HF}(x-t)((1-\chi )\left({u_{\psi(n)}}|_{x_3=0}-u|_{x_3=0}\right)(t)\\
&&C\int_{\R^2} dx \int_{|t|\geq k} dt |\varphi(x) |\frac{1}{|x-t|^3}\left(\bigl|{u_{\psi(n)}}|_{x_3=0}(t)\bigr|+\bigl|u|_{x_3=0}(t)\bigr|\right)\\
&\leq & C\int_{\R^2} dx |\varphi(x) |\left(\int_{|t|\geq k}\frac{1}{|x-t|^3}dt\right)^{1/2}\left(\int_{|x-t|\geq 1} \frac{dt}{|x-t|^3}(\left|u|_{x_3=0}\right|^2 + \left|u_{\psi(n)}|_{x_3=0}\right|^2)\right)^{1/2}\\
&\leq & C\left(\bigl\|u|_{x_3=0}\bigr\|_{L^2_{uloc}} + \sup_n\bigl\|{u_{n}}|_{x_3=0}\bigr\|_{L^2_{uloc}}\right)\int_{\R^2} dx |\varphi(x) |\left(\int_{|t|\geq k}\frac{1}{|x-t|^3}dt\right)^{1/2}\\
&\leq & C\left(\bigl\|u|_{x_3=0}\bigr\|_{L^2_{uloc}} + \sup_n\bigl\|{u_{n}}|_{x_3=0}\bigr\|_{L^2_{uloc}}\right)\|\varphi\|_{L^1}(k-R)^{-1/2}.
\end{eqnarray*}
Hence the second integral vanishes as $k\to \infty$ uniformly in $n$. We infer that 
$$
\lim_{n\to \infty}\int_{\R^2}\varphi M^{rem}_{HF}\ast ((1-\chi )(u_{\psi(n)}|_{x_3=0}-u|_{x_3=0})=0.
$$
Therefore $u$ is a solution of \eqref{SC0}.

The final induction inequality we will be much more complicated than \eqref{est:E_k-simplifiee}, and the proof will also be  more involved than the one of \cite{DGVNMnoslip}. However, the general scheme will be very close to the one described above.

\vskip2mm

$\bullet$ Concerning uniqueness of solutions of \eqref{SC0}, we use the same type of energy estimates as above. Once again, we give in the present paragraph a very rough idea of the computations, and we refer to section \ref{sec:uniqueness} for all details. When $f=0$ and $F=0$, the energy estimates \eqref{est:E_k-simplifiee} become
$$
E_k\leq C (E_{k+1}-E_k),
$$
and therefore
$$
E_k\leq r E_{k+1}
$$
with $r:=C/(1+C)\in (0,1)$. Hence, by induction,
$$
E_1\leq  r^{k-1} E_k\leq Cr^{k-1} k^2
$$
for all $k\geq 1$, since $u$ is assumed to be bounded in $H^1_{uloc}(\Omb)$. Letting $k\to \infty$, we deduce that $E_1=0$. Since all estimates are invariant by translation in $x_h$, we obtain that $u=0$.

\section{Estimates in the rough channel}
\label{sec:existence}

This section is devoted to the proof of energy estimates of the type \eqref{est:E_k-simplifiee} for solutions of the system \eqref{SCblapp}, which eventually lead to the existence of a solution of \eqref{SC0}.

The goal  is to prove that for some $m\geq 1$ sufficiently large (but independent of $n$), $E_m$ is bounded uniformly in $n$, which automatically implies the boundedness of $u_n$ in $H^1_{uloc}\left(\Omb\right)$. We reach this objective in two steps:
\begin{itemize}
\item We prove a Saint-Venant estimate: we claim that there exists a constant $C_1>0$ uniform in $n$ such that for all $m\in\mathbb N\setminus\{0\}$, for all $k\in\mathbb N$, $k\geq m$,
\begin{equation}\label{estStVenant}
E_k\leq C_1\left[k^2+E_{k+m+1}-E_k+\frac{k^4}{m^5}\sup_{j\geq m+k}\frac{E_{j+m}-E_j}{j}\right].
\end{equation}
The crucial fact is that $C_1$ depends only on $\|\om\|_{W^{1,\infty}}$ and $\|u_{0,h}\|_{H^2_{uloc}}, \|u_{0,3}\|_{H^1_{uloc}}, \|U_h\|_{H^{1/2}_{uloc}}$, so that it is independent of $n$, $k$ and $m$. 
\item  This estimate allows to deduce the bound in $H^1_{uloc}(\Omega)$ via a non trivial induction argument. 
\end{itemize}

Let us first explain the induction, assuming that \eqref{estStVenant} holds. The proof of \eqref{estStVenant} is postponed to the subsection \ref{secstvenant}.

\subsection{Induction}
\label{ssec:induction}
We aim at deducing from \eqref{estStVenant} that there exists $m\in\mathbb N\setminus\{0\}$, $C>0$ such that for all $n\in\mathbb N$,
\begin{equation}\label{borneunifu_n}
\int_{\Omega_m}\nabla u_n\cdot\nabla u_n\leq C.
\end{equation}
The proof of this uniform bound is divided into two points: 
\begin{itemize}
\item Firstly, we deduce from \eqref{estStVenant}, by downward induction on $k$, that there exist positive constants $C_2,\ C_3, m_0$, depending only on $C_0$ and $C_1$ appearing respectively in \eqref{estE_n} and \eqref{estStVenant}, such that for all $(k,m)$ such that $k\geq C_3m$ and $m\geq m_0$,
\begin{equation}\label{estStVenantrec}
E_k\leq C_2\left[k^2+m^3+\frac{k^4}{m^5}\sup_{j\geq m+k}\frac{E_{j+m}-E_j}{j}\right].
\end{equation}
Let us insist on the fact that $C_2$ and  $C_3$  are independent of $n,k,m$. They will be adjusted in the course of the induction argument (see \eqref{condconst}).
\item Secondly, we notice that \eqref{estStVenantrec} yields the bound we are looking for, choosing $k=\lfloor C_3m\rfloor +1$ and $m$ large enough.
\end{itemize}

$\bullet$ We thus start with the proof of \eqref{estStVenantrec}, assuming that \eqref{estStVenant} holds. 

First, notice that thanks to \eqref{estE_n}, \eqref{estStVenantrec} is true for $k\geq n$ as soon as $C_2\geq C_0$, remembering that $u_n=0$ on $\Omb\setminus \Om_n$. We then assume that \eqref{estStVenantrec} holds for $n,n-1,\ldots\ k+1$, where $k$ is an integer such that $k\geq C_3m$ (further conditions on $C_2,C_3$ will be derived at the end of the induction argument, see \eqref{def:const}).

 We prove \eqref{estStVenantrec} at the rank $k$ by contradiction. Hence, assume that \eqref{estStVenantrec} does not hold at the rank $k$, so that
\begin{equation}\label{estStVenantrecnonk}
E_k>C_2\left[k^2+m^3+\frac{k^4}{m^5}\sup_{j\geq m+k}\frac{E_{j+m}-E_j}{j}\right].
\end{equation}
Then, the induction assumption implies
\begin{eqnarray}
\nonumber&&E_{k+m+1}-E_k\\
\nonumber&\leq &C_2\left[(k+m+1)^2-k^2+\frac{(k+m+1)^4-k^4}{m^5}\sup_{j\geq k+m}\frac{E_{j+m}-E_j}{j}\right]\\
\label{induction}&\leq& C_2\left[2k(m+1)+(m+1)^2+ 80\frac{k^3}{m^4}\sup_{j\geq k+m}\frac{E_{j+m}-E_j}{j}\right].
\end{eqnarray}
Above, we have used the following inequality, which holds for all $k\geq m\geq 1$
\begin{eqnarray*}
(k+m+1)^4-k^4&=&4k^3(m+1)+6k^2(m+1)^2+4k(m+1)^3+(m+1)^4\\
&\leq & 8mk^3 + 6 k^2\times 4m^2 + 4k\times 8 m^3  + 16 m^4\\
&\leq & 80 m k^3.
\end{eqnarray*}

Using \eqref{estStVenantrecnonk}, \eqref{estStVenant} and \eqref{induction}, we get
\begin{eqnarray}
\nonumber&&C_2\left[k^2+m^3+\frac{k^4}{m^5}\sup_{j\geq k+m}\frac{E_{j+m}-E_j}{j}\right]\\
\label{estStVenantrecabs}&<&E_k\\
\nonumber&\leq &C_1\left[k^2+2C_2k(m+1)+C_2(m+1)^2+ \left(80C_2\frac{k^3}{m^4} + \frac{k^4}{m^5}\right)\sup_{j\geq k+m}\frac{E_{j+m}-E_j}{j}\right].
\end{eqnarray}
The constants $C_0,\ C_1>0$ are fixed and depend only on $\|\om\|_{W^{1,\infty}}$ and $\|u_{0,h}\|_{H^2_{uloc}}$, $\|u_{0,3}\|_{H^1_{uloc}}$, $\|U_h\|_{H^{1/2}_{uloc}}$ (cf. \eqref{estE_n} for the definition of $C_0$). We choose $m_0>1,\ C_2>C_0$ and $C_3\geq 1$ depending only on $C_0$ and $C_1$ so that
\be\label{def:const}
\left\{
\begin{array}{l}
k\geq C_3 m\\
\text{and }m\geq m_0
\end{array}
\right.\quad\mbox{implies}\quad \left\{  \begin{array}{l}
C_2 (k^2 + m^3)> C_1\left[k^2+2C_2k(m+1)+C_2(m+1)^2\right]\\
\text{and } C_2 \frac{k^4}{m^5}\geq C_1\left(80C_2\frac{k^3}{m^4} + \frac{k^4}{m^5}\right).
\end{array}\right.
\ee
One can easily check that it suffices to choose $C_2,C_3$ and $m_0$ so that
\be\label{condconst}
\begin{aligned}
C_2>\max(2 C_1,C_0),\\
(C_2-C_1)C_3>80 C_1 C_2,\\
 \forall m\geq m_0,\quad (C_2C_1 + C_1 )(m+1)^2<  m^3.
\end{aligned}
\ee
Plugging \eqref{def:const} into \eqref{estStVenantrecabs}, we reach a contradiction. Therefore \eqref{estStVenantrec} is true at the rank $k$. By induction, \eqref{estStVenantrec} is proved for all $m\geq m_0$ and for all $k\geq C_3 m$.

$\bullet$ It follows from \eqref{estStVenantrec}, choosing $k=\lfloor C_3 m\rfloor +1$, that there exists a constant $C>0$, depending only on $C_0,\ C_1,\ C_2,\ C_3$, and therefore only on $\|\om\|_{W^{1,\infty}}$ and on Sobolev-Kato norms on $u_0$ and $U_h$, such that for all $m\geq m_0$,
\begin{equation}\label{estStVenantrec'}
E_{\lfloor m/2\rfloor}\leq E_{\lfloor C_3 m\rfloor +1}\leq C\left[m^3+\frac{1}{m}\sup_{j\geq \lfloor C_3 m\rfloor +m+1}\frac{E_{j+m}-E_j}{j}\right].
\end{equation}
Let us now consider the set $\mathcal C_m$  defined by \eqref{def:Cm} for an even integer $m$.
As $\mathcal C_m$ is finite, there exists a square $c$ in $\mathcal C_m$, which maximizes 
\begin{equation*}
\left\{\|u_n\|_{H^1\left(\Omega_c\right)},c\in\mathcal C_m\right\}
\end{equation*}
where $\Omega_c=\left\{x\in\Omb,x_h\in c\right\}$. We then shift $u_n$ in such a manner that $c$ is centered at $0$. We call $\tilde{u}_n$ the shifted function. It is still compactly supported, yet not in $\Omega_n$ but in $\Omega_{2n}$,
\begin{equation*}
\int_{\Omega_{2n}}\left|\nabla\tilde{u}_n\right|^2=\int_{\Omega_n}\left|\nabla u_n\right|^2\quad\mbox{and}\quad\int_{\Omega_{m/2}}\left|\nabla\tilde{u}_n\right|^2=\int_{\Omega_c}\left|\nabla u_n\right|^2.
\end{equation*}
Analogously to $E_k$, we define $\widetilde{E}_k$. Since the arguments leading to the derivation of energy estimates are invariant by horizontal translation, and all constants depend only on Sobolev norms on $u_0, U_h$ and $\om$, we infer that \eqref{estStVenantrec'} still holds when $E_k$ is replaced by $\widetilde{E}_{k}$. On the other hand, recall that $\widetilde E_{m/2}$ maximizes $\left\|\tilde{u}_n\right\|_{H^1\left(\Omega_c\right)}^2$ on the set of squares of edge length $m$. Moreover, in the set $\Sigma_{j+m}\setminus\Sigma_j$ for $j\geq 1$, there are at most $4(j+m)/m$ squares of edge length $m$. As a consequence, we have, for all $j\in \N^*$,
\begin{equation*}
\widetilde E_{j+m}-\widetilde E_j\leq 4\frac{j+m}{m}\widetilde E_{m/2},
\end{equation*}
so that \eqref{estStVenantrec'} written for $\tilde{u}_n$ becomes 
\begin{align*}
\widetilde E_{m/2}&\leq C\left[m^3+\frac{1}{m^2}\left(\sup_{j\geq (C_3+1)m}1+\frac{m}{j}\right)\widetilde E_{m/2}\right]\\
&\leq C\left[m^3+\frac{1}{m^2}\widetilde E_{m/2}\right].
\end{align*}

This estimate being uniform in $m\in\mathbb N$ provided $m\geq m_0$, we can take $m$ large enough and get
\begin{equation*}
\widetilde E_{m/2}\leq C\frac{m^3}{1-C\frac{1}{m^2}},
\end{equation*}
so that eventually there exists $m\in \N$ such that
$$
\sup_{c\in \mathcal{C}_m}\|u_n\|_{H^1((c\times(-1,0)\cap \Omb))}^2 \leq C\frac{m^3}{1-C\frac{1}{m^2}}.
$$
This means exactly that $u_n$ is uniformly bounded in $H^1_{uloc}(\Omb)$. Existence follows, as explained in paragraph \ref{ssec:strategy}.

\subsection{Saint-Venant estimate}
\label{secstvenant}

This part is devoted to the proof of \eqref{estStVenant}. We carry out a Saint-Venant estimate on the system \eqref{SCblapp}, focusing on having constants uniform in $n$ as explained in the section \ref{ssec:strategy}. The preparatory work of  sections \ref{secesthalfspace} and \ref{subsecDtoN} allows us to focus on very few issues. The main problem is the non-locality of the Dirichlet to Neumann operator, which at first sight does not seem to be compatible with getting estimates independent of the size of the support of $u_n$. 

Let $n\in\mathbb N\setminus\{0\}$ be fixed. Let also $\varphi\in \mathcal C^\infty_0(\Omega^b)$ such that 
\begin{equation}\label{testfctCinfty}
\nabla\cdot\varphi=0,\qquad\varphi=0\mbox{ on }\Omega^b\setminus\Omega_n,\qquad\varphi|_{x_3=\omega(x_h)}=0.
\end{equation}
Remark \ref{remfcttest} states that such a function $\varphi$ is an appropriate test function for \eqref{SCblapp}. In the spirit of Definition \ref{def:sol-SC0}, we are led to the following weak formulation:
\begin{multline}\label{weakSCblapp}
\int_{\Omega^b}\nabla u_n\cdot\nabla\varphi+\int_{\Omega^b}u_{n,h}^\perp\cdot\varphi_h\\
=-\left\langle\DtoN\left(u_n|_{x_3=0^-}\right),\varphi|_{x_3=0^-}\right\rangle_{\mathcal D',\mathcal D}-\left\langle F,\varphi|_{x_3=0^-}\right\rangle_{\mathcal D',\mathcal D}+\left\langle f,\varphi\right\rangle_{\mathcal D',\mathcal D}
\end{multline}
Thanks to the representation formula for $\DtoN$ in Proposition \ref{prop:def-DN}, and to the estimates \eqref{estf} for $f$ and \eqref{estF} for $F$, the weak formulation \eqref{weakSCblapp} still makes sense for $\varphi\in H^1(\Omega^b)$ satisfying \eqref{testfctCinfty}.

\emph{In the sequel we drop the subscripts $n$. Note that all constants appearing in the inequalities below are uniform in $n$. However, one should be aware that $E_k$ defined by \eqref{defE_k} depends on $n$.} Furthermore, we denote $u|_{x_3=0^-}$ by $v_0$.

In order to estimate $E_k$, we introduce a smooth cutoff function $\chi_k=\chi_k(y_h)$ supported in $\Sigma_{k+1}$ and identically equal to $1$ on $\Sigma_k$. We carry out energy estimates on the system \eqref{SCblapp}. Remember that a test function has to meet the conditions \eqref{testfctCinfty}. We therefore choose
\begin{eqnarray*}
\varphi&=&
\left(
\begin{array}{c}
\varphi_h\\
\nabla\cdot\Phi_h
\end{array}
\right):=\left(\begin{array}{c}
\chi_k u_h\\
-\nabla_h\cdot\left(\chi_k\int_{\omega(x_h)}^zu_h(x_h,z')dz'\right)
\end{array}
\right)\in H^1(\Omega^b),\\
&=&\chi_k u - \begin{pmatrix}
0\\ \na_h \chi_k(x_h)\cdot \int_{\omega(x_h)}^zu_h(x_h,z')dz'
\end{pmatrix}
\end{eqnarray*}
which can be readily checked to satisfy \eqref{testfctCinfty}. Notice that this choice of test function is different from the one of \cite{DGVNMnoslip}, which is merely $\chi_k u$. Aside from being a suitable test function for \eqref{SCblapp}, the function $\varphi$ has the advantage of being divergence free, so that there will be no need to estimate commutator terms stemming from the pressure.

Plugging $\varphi$ in the weak formulation \eqref{weakSCblapp}, we get
\begin{eqnarray}
\int_{\Omega}\chi_k\left|\nabla u\right|^2&=&-\int_{\Omega}\nabla u\cdot\left(\nabla\chi_k\right)u+\int_{\Omega}\nabla u_3\cdot\nabla\left(\nabla_h\chi_k(x_h)\cdot\int_{\omega(x_h)}^zu_h(x_h,z')dz'\right)\nonumber\\
&&\label{estenchiku}-\left\langle\DtoN\left(v_0\right),\varphi|_{x_3=0^-}\right\rangle-\left\langle F,\varphi|_{x_3=0^-}\right\rangle+\left\langle f,\varphi\right\rangle.
\end{eqnarray} 
Before coming to the estimates, we state an easy bound on $\Phi_h$ and $\varphi$
\be\label{easybdvarphi_3}
\left\|\Phi_h\right\|_{H^1\left(\Omb\right)}
+ \| \varphi\|_{H^{1}\left(\Omb\right)}+ \|\Phi_h|_{x_3=0}\|_{H^{1/2}(\R^2)} + \|\varphi|_{x_3=0}\|_{H^{1/2}(\R^2)}\leq CE_{k+1}^\frac{1}{2}.
\ee
As we have recourse to Lemma \ref{lem:est-DN} to estimate some terms in \eqref{estenchiku}, we use \eqref{easybdvarphi_3} repeatedly in the sequel, sometimes with slight changes.

We have to estimate each of the terms appearing in \eqref{estenchiku}. The most difficult term is the one involving the Dirichlet to Neumann operator, because of the non-local feature of the latter: although $v_0$ is supported in $\Sigma_n$, $\DtoN(v_0)$ is not in general. However, each term in \eqref{estenchiku}, except $-\left\langle\DtoN \left(v_0\right),\varphi|_{x_3=0^-}\right\rangle$, is local, and hence very easy to bound. Let us sketch the estimates of the local terms. For the first term, we simply use the Cauchy-Schwarz and the Poincar\'e inequalities:
\begin{equation*}
\left|\int_{\Omega}\nabla u\cdot\left(\nabla\chi_k\right)u\right|\leq C\left(\int_{\Omega_{k,k+1}}\left|\nabla u\right|^2\right)^\frac{1}{2}\left(\int_{\Omega_{k,k+1}}\left|u\right|^2\right)^\frac{1}{2}\leq C\left(E_{k+1}-E_k\right).
\end{equation*}
In the same fashion, using \eqref{easybdvarphi_3}, we find that the second term is bounded by
\begin{eqnarray*}
&&\left|\int_{\Omega}\nabla u_3\cdot\nabla\left(\nabla_h\chi_k(x_h)\cdot\int_{\omega(x_h)}^zu_h(x_h,z')dz'\right)dx_hdz\right|\\
&\leq&\int_{\Omega}\left|\nabla u_3\right|\left|\nabla\nabla_h\chi_k(x_h)\right|\int_{\omega(x_h)}^z\left|u_h(x_h,z')\right|dz'dx_hdz\\
&&+\int_{\Omega}\left|\nabla_h u_3\right|\left|\nabla_h\chi_k(x_h)\right|\int_{\omega(x_h)}^z\left|\nabla_h u_h(x_h,z')\right|dz'dx_hdz\\
&&+\int_{\Omega}\left|\partial_3u_3\nabla_h\chi_k(x_h)\cdot u_h(x_h,z)\right|dx_hdz\\
&\leq& C\left(E_{k+1}-E_k\right).
\end{eqnarray*}
We finally bound the two last terms in \eqref{estenchiku} using \eqref{easybdvarphi_3}, and \eqref{estF} or \eqref{estf}:
\begin{align*}
\left|\left\langle F,\varphi|_{x_3=0^-}\right\rangle\right|&\leq C(k+1)\left[\left\|\chi_ku_h|_{x_3=0^-}\right\|_{H^{1/2}\left(\mathbb R^2\right)}+\left\|\nabla_h\cdot\left(\chi_k\int_{\omega(x_h)}^0u_h(x_h,z')dz'\right)\right\|_{H^{1/2}\left(\mathbb R^2\right)}\right]\\
&\leq C(k+1)\left[E_{k+1}^\frac{1}{2}+\left(E_{k+1}-E_k\right)^\frac{1}{2}\right]\leq C (k+1)E_{k+1}^{1/2},\\
\left|\left\langle f,\varphi\right\rangle\right|&\leq (k+1)E_{k+1}^\frac{1}{2}.
\end{align*}

The last term to handle is $-\left\langle\DtoN_h\left(v_0\right),\varphi|_{x_3=0^-}\right\rangle$. The issue of the non-locality of the Dirichlet to Neumann operator is already present for the Stokes system. Again, we attempt to adapt the ideas of \cite{DGVNMnoslip}. So as to handle the large scales of $\DtoN(v_0)$, we are led to introduce the auxiliary parameter $m\in \N^*$, which appears in \eqref{estStVenant}. We decompose $v_0$ into
\begin{align*}
v_0&=\left(\begin{array}{c}
\chi_kv_{0,h}\\
-\nabla_h\cdot\left(\chi_k\int_{\omega(x_h)}^0u_h(x_h,z')dz'\right)
\end{array}\right)+
\left(\begin{array}{c}
\left(\chi_{k+m}-\chi_k\right)v_{0,h}\\
-\nabla_h\cdot\left(\left(\chi_{k+m}-\chi_k\right)\int_{\omega(x_h)}^0u_h(x_h,z')dz'\right)
\end{array}\right)\\
&+\left(\begin{array}{c}
\left(1-\chi_{k+m}\right)v_{0,h}\\
-\nabla_h\cdot\left(\left(1-\chi_{k+m}\right)\int_{\omega(x_h)}^0u_h(x_h,z')dz'\right)
\end{array}\right).
\end{align*}
The truncations on the vertical component of $v_0$ are put inside the horizontal divergence, in order to apply the Dirichlet to Neumann operator to functions in $\mathbb K$.

The term corresponding to the truncation of $v_0$ by $\chi_k$, namely
\begin{eqnarray*}
&&-\left\langle\DtoN\left(\begin{array}{c}
\chi_kv_{0,h}\\
-\nabla_h\cdot\left(\chi_k\int_{\omega(x_h)}^0u_h(x_h,z')dz'\right)
\end{array}\right),\left(\begin{array}{c}
\varphi_h|_{x_3=0^-}\\
\nabla_h\cdot\Phi_h|_{x_3=0^-}
\end{array}\right)\right\rangle\\
&=&-\left\langle\DtoN\left(\begin{array}{c}
\chi_kv_{0,h}\\
-\nabla_h\cdot\left(\chi_k\int_{\omega(x_h)}^0u_h(x_h,z')dz'\right)
\end{array}\right),\left(\begin{array}{c}
\chi_kv_{0,h}\\
-\nabla_h\cdot\left(\chi_k\int_{\omega(x_h)}^0u_h(x_h,z')dz'\right)
\end{array}\right)\right\rangle
\end{eqnarray*}
is negative by positivity of the operator $\DtoN$ (see Lemma \ref{lem:lien-SC/DN}). For the term corresponding to the truncation by $\chi_{k+m}-\chi_k$ we resort to Lemma \ref{lem:est-DN} and \eqref{easybdvarphi_3}. This yields
\begin{eqnarray*}
&&\left|\left\langle\DtoN\left(\begin{array}{c}
\left(\chi_{k+m}-\chi_k\right)v_{0,h}\\
-\nabla_h\cdot\left(\left(\chi_{k+m}-\chi_k\right)\int_{\omega(x_h)}^0u_h(x_h,z')dz'\right)
\end{array}\right),\left(\begin{array}{c}
\varphi_h|_{x_3=0^-}\\
\nabla_h\cdot\Phi_h|_{x_3=0^-}
\end{array}\right)\right\rangle\right|\\
&\leq& C\left(E_{k+m+1}-E_k\right)^\frac{1}{2}E_{k+1}^\frac{1}{2}.
\end{eqnarray*}
However, the estimate of Lemma \ref{lem:est-DN} is not refined enough to address the large scales independently of $n$. For the term
\begin{equation*}
\left\langle\DtoN\left(\begin{array}{c}
\left(1-\chi_{k+m}\right)v_{0,h}\\
-\nabla_h\cdot\left(\left(1-\chi_{k+m}\right)\int_{\omega(x_h)}^0u_h(x_h,z')dz'\right)
\end{array}\right),\left(\begin{array}{c}
\varphi_h|_{x_3=0^-}\\
\nabla_h\cdot\Phi_h|_{x_3=0^-}
\end{array}\right)\right\rangle,
\end{equation*}
we must have a closer look at the representation formula given in Proposition \ref{prop:def-DN}. Let 
\begin{equation*}
\tilde{v}_0:=\left(\begin{array}{c}
\left(1-\chi_{k+m}\right)v_{0,h}\\
-\nabla_h\cdot\left(\left(1-\chi_{k+m}\right)\int_{\omega(x_h)}^0u_h(x_h,z')dz'\right)
\end{array}\right)=
\left(\begin{array}{c}
\left(1-\chi_{k+m}\right)v_{0,h}\\
-\nabla_h\cdot\tilde{V}_h
\end{array}\right).
\end{equation*}
We take $\chi:=\chi_{k+1}$ in the formula of Proposition \ref{prop:def-DN}. If $m\geq 2$, $\Supp \chi_{k+1}\cap \Supp (1-\chi_{k+m})=\emptyset$, so that the formula of Proposition \ref{prop:def-DN} becomes\footnote{Here, we use in a crucial (but hidden) way the fact that the zero order terms at low frequencies are constant. Indeed, such terms are local, so that 
$$
\int_{\R^2} \varphi|_{x_3=0^-}\cdot \bar M \tilde v_0=0.
$$
}
\begin{align*}
\left\langle \DtoN\tilde{v}_0,\varphi\right\rangle&=\int_{\R^2} \varphi|_{x_3=0^-}\cdot K_S\ast \tilde{v}_{0} +\int_{\mathbb R^2}\varphi|_{x_3=0^-}\cdot M^{rem}_{HF}*\tilde{v}_0\\
&+\int_{\mathbb R^2}\varphi_{h|x_3=0^-}\cdot \left\{\mathcal I[M_1]\left(\rho*\tilde{v}_{0,h}\right)+ K^{rem}_1*\tilde{v}_{0,h}\right\} \\
&+\int_{\mathbb R^2}\varphi_{h|x_3=0^-}\cdot \left\{\mathcal I[M_2]\left(\rho*\tilde{V}_h\right)+K^{rem}_2*\tilde{V}_h\right\} \\
&+\int_{\mathbb R^2}\Phi_{h|x_3=0^-}\cdot \left\{\mathcal I[M_3]\left(\rho*\tilde{v}_{0,h}\right)+K^{rem}_3*\tilde{v}_{0,h}\right\} \\
&+\int_{\mathbb R^2}\Phi_{h|x_3=0^-}\cdot \left\{\mathcal I[M_4]\left(\rho*\tilde{V}_h\right)+K^{rem}_4*\tilde{V}_h\right\}.
\end{align*}
Thus, we have two types of terms to estimate:
\begin{itemize}
\item On the one hand are the convolution terms with the kernels $K_S,M^{rem}_{HF}$, and $K^{rem}_i$ for $1\leq i\leq 4$, which all decay like $\frac{1}{|x_h|^3}$. 
\item On the other hand are the terms involving $\mathcal I[M_i]$ for $1\leq i\leq 4$.
\end{itemize}

For the first ones, we rely on the following nontrivial estimate:
\begin{lemma}\label{lemnoyautypeStokes}
For all $k\geq m$,
\begin{equation}\label{estnoyautypeStokes}
\left\|\tilde v_0 \ast \frac{1}{|\cdot|^3}\right\|_{L^2(\Sigma_{k+1})}\leq C\frac{k^\frac{3}{2}}{m^2}\left(\sup_{j\geq k+m}\frac{E_{j+m}-E_j}{j}\right)^\frac{1}{2}.
\end{equation}
This estimate still holds with $\tilde{V}_h$ in place of $\tilde{v}_0$.
\end{lemma}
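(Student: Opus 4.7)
The strategy rests on a dyadic-in-$m$ shell decomposition of the support of $\tilde v_0$, a sharp pointwise Cauchy--Schwarz on each shell, and a trace-plus-Poincar\'e argument turning $L^2$-norms of $v_0$ into energy differences of $u$.

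For $j\geq 1$, set $S_j := \Sigma_{k+(j+1)m}\setminus \Sigma_{k+jm}$. Since $\tilde v_0$ vanishes on $\Sigma_{k+m}$, one can write $\tilde v_0 = \sum_{j\geq 1}\tilde v_0\mathbf{1}_{S_j}$. For every $x\in \Sigma_{k+1}$ and $y\in S_j$ the estimate $|x-y|\geq c\,jm$ holds. The key step is a pointwise Cauchy--Schwarz shell by shell,
\begin{equation*}
\left|\int_{S_j}\frac{\tilde v_0(y)}{|x-y|^3}\,dy\right|\leq \|v_0\|_{L^2(S_j)}\left(\int_{|z|\geq c jm}\frac{dz}{|z|^6}\right)^{1/2}\leq \frac{C}{(jm)^2}\|v_0\|_{L^2(S_j)},
\end{equation*}
exploiting that $|z|^{-6}$ is integrable at infinity in $\R^2$. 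A cruder estimate pulling $|x-y|^{-3}$ out in $L^\infty$ would cost an extra $\sqrt{|S_j|}\sim\sqrt{m(k+jm)}$ factor and would lose the correct power of $k$.

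Combining the Sobolev trace inequality in each vertical slice $\{x_h\}\times(\om(x_h),0)$ with the Poincar\'e inequality (available since $u|_{\Gamma}=0$), one obtains
\begin{equation*}
\|v_0\|_{L^2(S_j)}^2\leq C\int_{S_j}\int_{\om(x_h)}^0\bigl(|u|^2+|\pa_3 u|^2\bigr)\,dz\,dx_h\leq C\bigl(E_{k+(j+1)m}-E_{k+jm}\bigr).
\end{equation*}
Setting $\mathcal S:=\sup_{j'\geq k+m}(E_{j'+m}-E_{j'})/j'$, the hypothesis yields $E_{k+(j+1)m}-E_{k+jm}\leq (k+jm)\mathcal S$, and summing the shell-by-shell bound gives
\begin{equation*}
\Bigl|\tilde v_0\ast\tfrac{1}{|\cdot|^3}(x)\Bigr|\leq \frac{C\sqrt{\mathcal S}}{m^2}\sum_{j\geq 1}\frac{\sqrt{k+jm}}{j^2}\leq \frac{C\sqrt{k\,\mathcal S}}{m^2}\qquad\forall x\in \Sigma_{k+1},
\end{equation*}
the last bound being obtained by splitting the series at $j\simeq k/m$ and using $k\geq m$. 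Integrating the square over $\Sigma_{k+1}$, whose area is of order $k^2$, produces the announced estimate. The argument for $\tilde V_h$ is identical, with the trace step replaced by the simpler $\|\tilde V_h\|_{L^2(S_j)}\leq \|u_h\|_{L^2((S_j\times(-1,0))\cap \Omb)}\leq C(E_{k+(j+1)m}-E_{k+jm})^{1/2}$, which follows directly from Poincar\'e.

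The main delicate point is calibrating the shell width to the parameter $m$ appearing in the hypothesis: shells of width exactly $m$ (rather than any other scale) are what converts $E_{k+(j+1)m}-E_{k+jm}$ into a telescoping quantity controlled by $\mathcal S$, and the Cauchy--Schwarz exponent $6$ is precisely what is needed to recover the correct dependence in $k$ and $m$.
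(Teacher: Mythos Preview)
Your proof is correct and actually takes a more elementary route than the paper's. Both arguments begin with the same dyadic decomposition of $\Sigma\setminus\Sigma_{k+m}$ into annular shells $S_j=\Sigma_{k+(j+1)m}\setminus\Sigma_{k+jm}$, but the subsequent use of Cauchy--Schwarz differs. You apply Cauchy--Schwarz \emph{shell by shell}, pairing $\tilde v_0$ with $|x-y|^{-3}$ directly on each $S_j$ and exploiting that $\int_{|z|\geq cjm}|z|^{-6}\,dz\sim (jm)^{-4}$; this immediately gives a pointwise $L^\infty(\Sigma_{k+1})$ bound of order $\sqrt{k\,\mathcal S}/m^2$, which you then convert to an $L^2$ bound by paying the area factor $|\Sigma_{k+1}|^{1/2}\sim k$. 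The paper instead performs a \emph{single global} Cauchy--Schwarz with an inserted weight $|t|^{\pm 1}$ and an auxiliary exponent $\eta\in(1/2,1)$, splitting $|y-t|^{-6}$ as $|y-t|^{-3-2\eta}\cdot|y-t|^{-3+2\eta}$, and only then decomposes into shells; this requires a subsequent rescaling argument and a further splitting of the integration domain into $\Sigma\setminus\Sigma_2$ and $\Sigma_2\setminus\Sigma_{1+m/k}$ to extract the same $k^{3/2}/m^2$ power. Your argument avoids the auxiliary parameter $\eta$ and the rescaling entirely, at the price of obtaining an $L^\infty$ bound that is then wastefully converted to $L^2$; the paper's route works directly in $L^2$ but is considerably more technical for no gain in the final estimate. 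Both approaches rely on the same trace-plus-Poincar\'e step to convert $\|\tilde v_0\|_{L^2(S_j)}^2$ into an energy difference. One minor point: your lower bound $|x-y|\geq cjm$ tacitly requires $m\geq 2$ (for $m=j=1$ the distance $jm-1$ vanishes), but the paper's own proof makes the same assumption.
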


For the second ones, we have recourse to:
\begin{lemma}\label{lemnoyauordre1}
For all $k\geq m$, for all $1\leq i,\ j\leq 2$,
\begin{equation}\label{estnoyauordre1}
\left\|\mathcal I\left[\frac{\xi_i\xi_j}{|\xi|}\right]\left(\rho*\tilde v_{0,h}\right)\right\|_{L^2(\Sigma_{k+1})}\leq C\frac{k^2}{m^\frac{5}{2}}\left(\sup_{j\geq k+m}\frac{E_{j+m}-E_j}{j}\right)^\frac{1}{2}.
\end{equation}
This estimate still holds with $\tilde{V}_h$ in place of $v_{0,h}$.
\end{lemma}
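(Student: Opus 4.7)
The strategy mirrors Lemma~\ref{lemnoyautypeStokes}, but replaces the explicit $1/|x|^{3}$ convolution kernel by the regularised singular operator $\mathcal I[\xi_i\xi_j/|\xi|]$ applied to $\rho*\tilde v_{0,h}$. The chief subtlety compared with Lemma~\ref{lemnoyautypeStokes} is that $\mathcal I[\xi_i\xi_j/|\xi|]$ is \emph{not} a convolution against a locally integrable kernel: the kernel $\gamma_{ij}$ behaves like $|x-y|^{-3}$ in $\mathbb R^{2}$ and must be used together with the Taylor cancellation in the definition. Moreover $\tilde v_{0,h}$ need not lie in $L^{2}_{uloc}(\mathbb R^{2})$, since its $L^{2}$ mass on a cell $l+[0,1]^{2}$ grows like $|l|_{\infty}^{1/2}$ through the weight $S^{1/2}:=\bigl(\sup_{j\geq k+m}(E_{j+m}-E_{j})/j\bigr)^{1/2}$. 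We therefore cannot apply Lemma~\ref{lemauxorder1} directly to $\varphi=\rho*\tilde v_{0,h}$ with its global $L^{\infty}$ norm.

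The first step is to re-run the proof of Lemma~\ref{lemauxorder1} pointwise and locally: for any $x\in\mathbb R^{2}$ and any $K>0$,
\[
\bigl|\mathcal I[\xi_i\xi_j/|\xi|]\varphi(x)\bigr|\leq C\Bigl(K\sup_{B(x,K)}|\nabla^{2}\varphi|+\frac{|\varphi(x)|}{K}+\int_{|y-x|>K}\frac{|\varphi(y)|}{|y-x|^{3}}\,dy\Bigr).
\]
Specialising to $\varphi=\rho*\tilde v_{0,h}$ with $x\in\Sigma_{k+1}$, the first two terms involve only values of $\varphi$ within distance $K$ of $x$. Since $\tilde v_{0,h}$ vanishes inside $\Sigma_{k+m}$ and $\rho,\nabla^{2}\rho$ are Schwartz, a cellwise decomposition together with Cauchy--Schwarz and the trace estimate
\[
\|v_{0,h}\|_{L^{2}(l+[0,1]^{2})}\leq C\|u\|_{H^{1}(\Omega_{c})}\leq C\,|l|_{\infty}^{1/2}S^{1/2}
\]
(obtained from the trace theorem, Poincar\'e's inequality, and $E_{j+m}-E_{j}\leq jS$ on the annulus $\Sigma_{j+m}\setminus\Sigma_{j}$) makes these local terms arbitrarily small in powers of $m$ whenever $K\leq m/2$.

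The convolution tail is then controlled exactly as in Lemma~\ref{lemnoyautypeStokes}. Decomposing $\tilde v_{0,h}$ over unit cells $l+[0,1]^{2}$ with $|l|_{\infty}\geq k+m-1$ and using the trace bound above yields
\[
\int_{|y-x|>K}\frac{|\rho*\tilde v_{0,h}(y)|}{|y-x|^{3}}dy\leq CS^{1/2}\sum_{|l|_{\infty}\geq k+m-1}\frac{|l|_{\infty}^{1/2}}{(1+|l-x|)^{3}},
\]
and a standard radial weighted summation produces a pointwise bound in $x\in\Sigma_{k+1}$. Integrating the resulting pointwise estimate over $\Sigma_{k+1}$ picks up an extra factor $|\Sigma_{k+1}|^{1/2}\lesssim k$, and finally one optimises in $K$; the case with $\tilde V_{h}$ in place of $\tilde v_{0,h}$ follows identically, since $\tilde V_{h}=(1-\chi_{k+m})\int_{\omega(x_{h})}^{0}u_{h}(\cdot,z')dz'$ obeys the same spatial support restriction and an analogous cellwise $L^{2}$ estimate via Fubini in the vertical variable.

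The main obstacle I anticipate is extracting the sharp exponents $k^{2}/m^{5/2}$ rather than merely some polynomial bound: the suboptimal interpolation $\|\cdot\|_{\infty}^{1/2}\|\nabla^{2}\cdot\|_{\infty}^{1/2}$ inherent to Lemma~\ref{lemauxorder1} loses a fractional power compared with the kernel-based estimate of Lemma~\ref{lemnoyautypeStokes}, and the delicate balancing between the parameter $K$, the Schwartz decay of $\rho$ near $\Sigma_{k+1}$, and the convolution tail must be arranged carefully to produce precisely the exponent $5/2$ in $m$.
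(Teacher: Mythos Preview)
Your overall framework is the same as the paper's: split $\mathcal I[\xi_i\xi_j/|\xi|](\rho*\tilde v_{0,h})$ into the three pieces $\Aa,\Bb,\Cc$ of \eqref{splitABC} with $K=m/2$, and kill $\Aa,\Bb$ via the Schwartz decay of $\rho,\nabla^2\rho$ across the gap of width $\sim m$ between $\Sigma_{k+1+m/2}$ and $\Supp\tilde v_{0,h}$. That part is fine and matches the paper almost verbatim.

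The gap is in your treatment of the far piece $\Cc$. Your unit-cell decomposition with the bound $\|v_{0,h}\|_{L^2(l+[0,1]^2)}\leq C|l|_\infty^{1/2}S^{1/2}$ is correct but fatally wasteful: that cellwise bound is obtained by embedding the single cell into a width-$m$ annulus and using $E_{j+m}-E_j\leq jS$, so you are charging each of the $\sim jm$ cells in the annulus with the \emph{full} annulus mass. If you carry your sum through, the pointwise bound on $\Cc$ is $\sim S^{1/2}k^{1/2}/m$, hence $\|\Cc\|_{L^2(\Sigma_{k+1})}\lesssim S^{1/2}k^{3/2}/m$. After Young's inequality this puts a term $\frac{k^3}{m^2}S$ in the Saint-Venant estimate; at $k\sim C_3m$ this is $\sim mS\sim \mathcal E_m$, and the closing argument of Section~\ref{ssec:induction} fails (the coefficient of $\mathcal E_m$ does not go to zero with $m$). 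You correctly anticipated that the exponents were the delicate point, and indeed your route cannot produce them.

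What the paper does instead for $\Cc$ is the key missing idea: write $\Cc(x)=\int_{|x-y|\geq m/2}\gamma_{ij}(x-y)\int\rho(y-t)\tilde v_{0,h}(t)\,dt\,dy$ and \emph{exchange} the roles of $y$ and $t$ via the elementary inequality
\[
\Bigl|\frac{1}{|x-y|^3}-\frac{1}{|x-t|^3}\Bigr|\leq C\,|y-t|\Bigl(\frac{1}{|x-y|\,|x-t|^3}+\frac{1}{|x-y|^3\,|x-t|}\Bigr).
\]
The main contribution then becomes $\int_{\Sigma\setminus\Sigma_{k+m}}|x-t|^{-3}|\tilde v_{0,h}(t)|\,dt$, which is \emph{exactly} the quantity of Lemma~\ref{lemnoyautypeStokes} and is controlled there by the sharp $L^2(\Sigma_{k+1})$ bound $k^{3/2}/m^2\cdot S^{1/2}$ (obtained through annuli of width $m$ and a weighted Cauchy--Schwarz, not unit cells). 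The cross terms carry an extra factor $|y-t|$, absorbed by the Schwartz function $|y-t|\rho(y-t)$, and are estimated again over width-$m$ annuli with an auxiliary parameter $\eta$; taking $\eta=1/2$ gives the stated $k^2/m^{5/2}$. The two structural points you are missing are thus: (i) do \emph{not} pass through a pointwise bound on $\Cc$ --- reduce it instead to the $L^2$ estimate of Lemma~\ref{lemnoyautypeStokes}; and (ii) decompose in width-$m$ annuli throughout, never in unit cells.
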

We postpone the proofs of these two key lemmas to  section \ref{secprooflemmas}. Applying repeatedly  Lemma \ref{lemnoyautypeStokes} and  Lemma \ref{lemnoyauordre1} together with the estimates \eqref{easybdvarphi_3}, we are finally led to the estimate
\begin{multline*}
E_k\leq C\left((k+1)E_{k+1}^\frac{1}{2}+\left(E_{k+1}-E_k\right)+E_{k+1}^\frac{1}{2}\left(E_{k+m+1}-E_k\right)^\frac{1}{2}\right.\\\left.+\frac{k^2}{m^\frac{5}{2}}E_{k+1}^\frac{1}{2}\left(\sup_{j\geq k+m}\frac{E_{j+m}-E_j}{j}\right)^\frac{1}{2}\right),
\end{multline*}
for all $k\geq m\geq 1$. Now, since $E_k$ is increasing in $k$, we have
$$
E_{k+1}\leq E_k + (E_{k+m+1} -E_k).
$$
Using Young's inequality, we infer that for all $\nu>0$, there exists a constant $C_\nu$ such that for all $k\geq 1$,
$$
E_k \leq \nu E_k + C_\nu \left(k^2+ E_{k+m+1}-E_k + \frac{k^4}{m^5}\sup_{j\geq k+m}\frac{E_{j+m}-E_j}{j}\right).
$$
Choosing $\nu<1$,  inequality \eqref{estStVenant} follows.

\subsection{Proof of the key lemmas}
\label{secprooflemmas}

It remains to establish the estimates \eqref{estnoyautypeStokes} and \eqref{estnoyauordre1}. The proofs are quite technical, but similar ideas and tools are used in the two proofs.

\begin{proof}[Proof of Lemma \ref{lemnoyautypeStokes}]
We use an idea of G\'erard-Varet and Masmoudi (see \cite{DGVNMnoslip}) to treat the large scales: we decompose the set $\Sigma\setminus\Sigma_{k+m}$ as
$$
\Sigma\setminus\Sigma_{k+m}=\bigcup_{j=1}^\infty \Sigma_{k+m(j+1)}\setminus \Sigma_{k+mj}.
$$
On every set $\Sigma_{k+m(j+1)}\setminus \Sigma_{k+mj}$, we bound the $L^2$ norm of $\tilde v_0$ by $E_{k+m(j+1)} - E_{k+mj}$. Let us stress here a technical difference with the work of G\'erard-Varet and Masmoudi: since $\Sigma$ has dimension two, the area of the set $\Sigma_{k+m(j+1)}\setminus \Sigma_{k+mj}$ is of order $(k+mj)m$. In particular, we expect $E_{k+m(j+1)} - E_{k+mj}\sim (k+mj)m\|u\|_{H^1_{uloc}}^2$ to grow with $j$. Thus we work with the quantity
$$
\sup_{j\geq k+m}\frac{E_{j+m}-E_j}{j},
$$
which we expect to be bounded uniformly in $n,k$, rather than with $\sup_{j\geq k+m}(E_{j+m}-E_j).$

Now, applying the Cauchy-Schwarz inequality  yields for $\eta>0$
$$\int_{\Sigma_{k+1}}dy \left(\int_{\mathbb R^2}\frac{1}{|y-t|^3}\tilde{v}_0(t)dt\right)^2\\
\leq C\int_{\Sigma_{k+1}}dy\int_{\Sigma\setminus\Sigma_{k+m}}\frac{|t|}{|y-t|^{3+2\eta}}dt\int_{\Sigma\setminus\Sigma_{k+m}}\frac{|\tilde{v}_0(t)|^2}{|t||y-t|^{3-2\eta}}dt.
$$
The role of the division by the $|t|$ factor in the second integral is precisely to force the apparition of the quantities $({E_{j+m}-E_j})/{j}$.
More precisely, for $y\in \Sigma_{k+1}$ and $m\geq 1$,
\begin{align*}
\int_{\Sigma\setminus\Sigma_{k+m}}\frac{|\tilde{v}_0(t)|^2}{|t||y-t|^{3-2\eta}}dt&=\sum_{j=1}^\infty\int_{\Sigma_{k+m(j+1)}\setminus \Sigma_{k+mj}}\frac{|\tilde{v}_0(t)|^2}{|t||y-t|^{3-2\eta}}dt\\
&\leq C \sum_{j=1}^\infty (E_{k+m(j+1)} - E_{k+mj}) \frac{1}{(k+mj) |mj+k-|y|_\infty|^{3-2\eta}}\\
&\leq C\left(\sup_{j\geq k+m}\frac{E_{j+m}-E_j}{j}\right)\sum_{j=1}^\infty\frac{1}{|mj+k-|y|_\infty|^{3-2\eta}}\\
&\leq C_\eta\frac{1}{m}\frac{1}{|m+k-|y|_\infty|^{2-2\eta}}\left(\sup_{j\geq k+m}\frac{E_{j+m}-E_j}{j}\right),
\end{align*}
where $|x|_\infty:=\max(|x_1|, |x_2|)$ for $x\in \R^2$.
A simple rescaling yields
\begin{eqnarray*}
&&\int_{\Sigma_{k+1}}\int_{\Sigma\setminus\Sigma_{k+m}}\frac{|t|}{|y-t|^{3+2\eta}|m+k-|y|_\infty|^{2-2\eta}}\:dt\:dy\\
&=&\int_{\Sigma_{1+\frac{1}{k}}}\int_{\Sigma\setminus\Sigma_{1+\frac{m}{k}}}\frac{|t|}{|y-t|^{3+2\eta}\left|1+\frac{m}{k}-|y|_\infty\right|^{2-2\eta}}\:dt\:dy.
\end{eqnarray*}
Let us assume that $k\geq m\geq 2$ and take $\eta\in\left]\frac{1}{2},1\right[$. We decompose $\Sigma \setminus \Sigma_{1+\frac{m}{k}}$ as $(\Sigma\setminus\Sigma_2) \cup (\Sigma_2 \setminus \Sigma_{1+\frac{m}{k}})$.
On the one hand, since $|t-y|\geq C|t-y|_\infty\geq C( |t|_\infty-|y|_\infty) \geq C(|t|_\infty-3/2)$,
$$
\int_{\Sigma_{1+\frac{1}{k}}}\int_{\Sigma\setminus\Sigma_2}\frac{|t|}{|y-t|^{3+2\eta}\left|1+\frac{m}{k}-|y|_\infty\right|^{2-2\eta}}dtdy\leq C_\eta\int_{\Sigma_{1+\frac{1}{k}}}\frac{dy}{\left|1+\frac{m}{k}-|y|_\infty\right|^{2-2\eta}}.
$$
Decomposing $\Sigma_{1+\frac{1}{k}}$ into elementary  regions of the type $\Sigma_{r+dr}\setminus \Sigma_r$, on which $|y|_\infty\simeq r$, we infer that the right-hand side of the above inequality is bounded by
\begin{align*}
& C\int_0^{1+\frac{1}{k}}\frac{r}{\left|1+\frac{m}{k}-r\right|^{2-2\eta}}dr\leq C\int_0^{1+\frac{1}{k}}\frac{dr}{\left|r+\frac{m-1}{k}\right|^{2-2\eta}}\\
&\leq C_\eta\left(\left(1+\frac{m}{k}\right)^{2\eta-1}-\left(\frac{m-1}{k}\right)^{2\eta-1}\right)\leq C_\eta.
\end{align*}
On the other hand, $y\in\Sigma_{1+\frac{1}{k}}$ implies $\left|1+\frac{m}{k}-|y|_\infty\right|\geq \frac{m-1}{k}$, so
\begin{eqnarray*}
&&\int_{\Sigma_{1+\frac{1}{k}}}\int_{\Sigma_2\setminus\Sigma_{1+\frac{m}{k}}}\frac{|t|}{|y-t|^{3+2\eta}\left|1+\frac{m}{k}-|y|_\infty\right|^{2-2\eta}}\:dt\:dy\\
&\leq&C\left(\frac{k}{m-1}\right)^{2-2\eta}\int_{\Sigma_{1+\frac{1}{k}}}dy\int_{\Sigma_2\setminus\Sigma_{1+\frac{m}{k}}}\frac{dt}{|t-y|^{3+2\eta}}\\
&\leq&C\left(\frac{k}{m-1}\right)^{2-2\eta}\int_{\begin{subarray}{c}X\in\mathbb R^2\\
\frac{m-1}{k}\leq |X|\leq C
\end{subarray}}\frac{dX}{|X|^{3+2\eta}}\leq C_\eta\left(\frac{k}{m}\right)^3.
\end{eqnarray*}
Gathering these bounds leads to \eqref{estnoyautypeStokes}.
\end{proof}

\begin{proof}[Proof of Lemma \ref{lemnoyauordre1}]
As in the preceding proof, the overall strategy is to decompose 
\begin{equation*}
(1-\chi_{k+m})v_{0,h}=\sum_{j=1}^\infty(\chi_{k+m(j+1)}-\chi_{k+mj})v_{0,h}.
\end{equation*}
In the course of the proof, we introduce some auxiliary parameters, whose meaning we explain. We cannot use Lemma \ref{lemauxorder1} as such, because we will need a much finer estimate. 
We therefore rely on the splitting \eqref{splitABC} with $K:=\frac{m}{2}$. An important property is the fact that $\rho:=\mathcal F^{-1}\phi$ belongs to the Schwartz space $\mathcal S\left(\mathbb R^2\right)$ of rapidly decreasing functions.

As in the proof of Lemma \ref{lemauxorder1}, for $K=m/2$ and $x\in \Sigma_{k+1}$, we have
$$
|\Aa(x)|\leq C m \|\na^2 \rho \ast ((1-\chi_{k+m} v_{0,h}))\|_{L^\infty(\Sigma_{k+1+\frac{m}{2}})},
$$
and for all $\alpha>0$, for all $y\in \Sigma_{k+1+\frac{m}{2}}$,
\begin{align*}
\left|\nabla^2\rho*(1-\chi_{k+m})v_{0,h}(y)\right|&\leq \int_{\Sigma\setminus\Sigma_{k+m}}\left|\nabla^2\rho(y-t)\right||v_{0,h}(t)|dt\\
&\leq \left(\int_{\Sigma\setminus\Sigma_{k+m}}\left|\nabla^2\rho(y-t)\right|^2|t|^\alpha dt\right)^{1/2}\left(\int_{\Sigma\setminus\Sigma_{k+m}}\frac{|v_{0,h}(t)|^2}{|t|^\alpha}dt\right)^{1/2}.
\end{align*}
Yet, on the one hand, for $\alpha>2$,
\begin{align*}
\int_{\Sigma\setminus\Sigma_{k+m}}\frac{|v_{0,h}(t)|^2}{|t|^\alpha}dt&=\sum_{j=1}^\infty\int_{\Sigma_{k+m(j+1)}\setminus\Sigma_{k+mj}}\frac{|v_{0,h}(t)|^2}{|t|^\alpha}dt\\
&\leq \left(\sup_{j\geq k+m}\frac{E_{j+m}-E_j}{j}\right)\sum_{j=1}^\infty\frac{1}{(k+mj)^{\alpha-1}}\\
&\leq C\frac{1}{m}\frac{1}{(k+m)^{\alpha-2}}\left(\sup_{j\geq k+m}\frac{E_{j+m}-E_j}{j}\right).
\end{align*}
On the other hand, $y\in\Sigma_{k+1+\frac{m}{2}}$ and $t\in\Sigma\setminus\Sigma_{k+m}$ implies $|y-t|\geq\frac{m}{2}-1$, 
\begin{eqnarray*}
&&\int_{\Sigma\setminus\Sigma_{k+m}}\left|\nabla^2\rho(y-t)\right|^2|t|^\alpha dt\\
&\leq & C \int_{\Sigma\setminus\Sigma_{k+m}}\left|\nabla^2\rho(y-t)\right|^2(|y-t|^\alpha + |y|^\alpha) dt\\
&\leq&C\left(\left(k+1+\frac{m}{2}\right)^\alpha\int_{|s|\geq\frac{m}{2}-1}\left|\nabla^2\rho(s)\right|^2ds+\int_{|s|\geq\frac{m}{2}-1}\left|\nabla^2\rho(s)\right|^2|s|^\alpha ds\right).
\end{eqnarray*}
Now, since $\rho \in \mathcal S(\R^2)$, for all $\beta>0, \alpha>0$ there exists a constant $C_{\alpha,\beta}$ such that
$$
\int_{|s|\geq\frac{m}{2}-1}(1+|s|^\alpha)\left|\nabla^2\rho(s)\right|^2ds\leq C_\beta m^{-2\beta}.
$$
The role of auxiliary parameter $\beta$ is to ``eat'' the powers of $k$ in order to get a Saint-Venant estimate for which the induction procedure of section \ref{ssec:induction} works. Gathering the latter bounds, we obtain for $k\geq m$
\be
\|\Aa\|_{L^\infty(\Sigma_{k+1})}\leq C_\beta k m^{-\beta}\left(\sup_{j\geq k+m}\frac{E_{j+m}-E_j}{j}\right)^{1/2}.\label{estfstintorder1ls}
\ee
The second term in \eqref{splitABC} is even simpler to estimate. One ends up with
\begin{equation}\label{estsndintorder1ls}
\|\Bb\|_{L^\infty(\Sigma_{k+1})}\leq  C_\beta k m^{-\beta}\left(\sup_{j\geq k+m}\frac{E_{j+m}-E_j}{j}\right)^{1/2}.
\end{equation}
Therefore $\Aa$ and $\Bb$ satisfy the desired estimate, since
$$
\|\Aa\|_{L^2(\Sigma_{k+1})} \leq C k \|\Aa\|_{L^\infty(\Sigma_{k+1})},\quad \|\Bb\|_{L^2(\Sigma_{k+1})} \leq C k \|\Bb\|_{L^\infty(\Sigma_{k+1})}.
$$

The last integral in \eqref{splitABC} is more intricate, because it is a convolution integral. Moreover, $\rho*(1-\chi_{k+m})v_{0,h}(y)$ is no longer supported  in $\Sigma\setminus\Sigma_{k+m}$. The idea is to ``exchange'' the variables $y$ and $t$, i.e. to replace the kernel
 $|x-y|^{-3}$ by $|x-t|^{-3}$. Indeed, we have, for all $x,\ y,\ t\in \R^2$,
 \be\label{est:|x-y|^3}
 \left|\frac{1}{|x-y|^3}- \frac{1}{|x-t|^3}\right|\leq \frac{C|y-t|}{|x-y||x-t|^3} +  \frac{C|y-t|}{|x-y|^3|x-t|}.
 \ee
We decompose the integral term accordingly. We obtain, using the fast decay of $\rho$,
\begin{eqnarray*}
&& \int_{|x-y|\geq m/2} dy \frac{1}{|x-y|^3} |\rho\ast ((1-\chi_{k+m})v_{0,h})(y)|\\
&\leq & C \int_{|x-y|\geq m/2} dy \int_{\Sigma \setminus \Sigma_{k+m}} dt \frac{1}{|x-t|^3} |\rho(y-t)| |v_{0,h}(t)|\\
&&+ C  \int_{|x-y|\geq m/2} dy \int_{\Sigma \setminus \Sigma_{k+m}} dt \frac{|y-t|}{|x-y|^3|x-t|} |\rho(y-t)| |v_{0,h}(t)|\\
&&+C\int_{|x-y|\geq m/2} dy \int_{\Sigma \setminus \Sigma_{k+m}} dt \frac{|y-t|}{|x-y||x-t|^3}  |\rho(y-t)| |v_{0,h}(t)|\\
&\leq & C \int_{\Sigma \setminus \Sigma_{k+m}} dt \frac{1}{|x-t|^3}  |v_{0,h}(t)|\\
&&+ C \int_{|x-y|\geq m/2} dy \int_{\Sigma \setminus \Sigma_{k+m}} dt \frac{|y-t|}{|x-y|^3|x-t|} |\rho(y-t)| |v_{0,h}(t)|.
\end{eqnarray*}
The first term in the right hand side above can be addressed thanks to Lemma \ref{lemnoyautypeStokes}. We focus on the second term. As above, we use the Cauchy-Schwarz inequality
\begin{eqnarray*}
&&\int_{\Sigma\setminus\Sigma_{k+m}}\frac{|y-t|\left|\rho(y-t)\right|}{|x-t|}|v_{0,h}(t)|dt\\
&\leq&\sum_{j=1}^\infty\int_{\Sigma_{k+m(j+1)}\setminus\Sigma_{k+mj}}\frac{|y-t|\left|\rho(y-t)\right|}{|x-t|}|v_{0,h}(t)|dt\\
&\leq&\left(\sup_{j\geq k+m}\frac{E_{m+j}-E_j}{j}\right)^\frac{1}{2}\sum_{j=1}^\infty\frac{1}{k+mj-|x|_\infty}\left(\int_{\Sigma_{k+m(j+1)}\setminus\Sigma_{k+mj}}|y-t|^2|\rho(y-t)|^2|t|dt\right)^\frac{1}{2}.
\end{eqnarray*}
The idea is  to use the fast decay of $\rho$ so as to bound the integral over $\Sigma_{k+m(j+1)}\setminus \Sigma_{k+mj}$. However, $\sum_{j=1}^\infty\frac{1}{k+mj-|x|}=\infty$, so that we also need to recover some decay with respect to $j$ in this integral. For $t\in \Sigma_{k+m(j+1)}\setminus \Sigma_{k+mj}$, 
$$
1\leq \frac{|t|-|x|_\infty}{k+mj-|x|_\infty} \leq  \frac{|t|}{k+mj-|x|_\infty} ,
$$
so that for all $\eta>0$,
\begin{eqnarray*}
&&\int_{\Sigma_{k+m(j+1)}\setminus\Sigma_{k+mj}}|y-t|^2|\rho(y-t)|^2|t|dt\\
&\leq & \frac{1}{(k+mj-|x|_\infty)^{2\eta}} \int_{\Sigma_{k+m(j+1)}\setminus\Sigma_{k+mj}}|y-t|^2|\rho(y-t)|^2|t|^{1+2\eta}dt\\
&\leq &  \frac{C}{(k+mj-|x|_\infty)^{2\eta}} \int_{\Sigma_{k+m(j+1)}\setminus\Sigma_{k+mj}}|y-t|^2(|y-t|^{1+2\eta} + |y|^{1+2\eta})|\rho(y-t)|^2dt\\
&\leq & \frac{C_\eta}{(k+mj-|x|_\infty)^{2\eta}}(1+|y-x|^{1+2\eta} + |x|^{1+2\eta})).
\end{eqnarray*}

Summing in $j$, we have as before
$$
\sum_{j=1}^\infty \frac{1}{(k+mj-|x|_\infty)^{1+\eta}}\leq \frac{C_\eta}{m(k+m-|x|_\infty)^\eta}\leq \frac{C_\eta}{m^{1+\eta}}
$$
so that for $0<\eta<\frac{1}{2}$, one finally obtains, for $x\in \Sigma_{k+1}$,
\begin{eqnarray*}
&&\int_{|x-y|\geq\frac{m}{2}}dy\int_{\Sigma\setminus\Sigma_{k+m}}\frac{|y-t|\left|\rho(y-t)\right|}{|x-y|^3|x-t|}|v_{0,h}(t)|dt\\
&\leq&Cm^{-1-\eta}\left(\sup_{j\geq k+m}\frac{E_{m+j}-E_j}{j}\right)^\frac{1}{2}\int_{|x-y|\geq\frac{m}{2}}\left[|x-y|^{-\frac{5}{2}+\eta}+|x|^{\frac{1}{2} + \eta}|x-y|^{-3}\right]dy\\
&\leq&Cm^{-\frac{3}{2}}\left[1+\left(\frac{k}{m}\right)^{\frac{1}{2}+\eta}\right]\left(\sup_{j\geq k+m}\frac{E_{k+j}-E_j}{j}\right)^\frac{1}{2}.
\end{eqnarray*}
Gathering all the terms, and using one again the fact that
$$
\|F\|_{L^2(\Sigma_{k+1})}\leq C k \|F\|_{L^\infty(\Sigma_{k+1})}\quad \forall F \in L^\infty(\Sigma_{k+1}),
$$
we infer that for all $k\geq m$, for all $\eta>0$,
$$
\| \Cc\|_{L^2(\Sigma_{k+1})} \leq C_\eta \frac{k^{\frac{3}{2}+ \eta}}{m^{2+\eta}}\left(\sup_{j\geq k+m}\frac{E_{k+j}-E_j}{j}\right)^\frac{1}{2}.
$$
Choose $\eta=1/2$; Lemma \ref{lemnoyauordre1} is thus proved.
\end{proof}

\section {Uniqueness}

\label{sec:uniqueness}

This section is devoted to the proof of uniqueness of solutions of \eqref{SC0}. Therefore we consider the system \eqref{SC0} with $f=0$ and $F=0$, and we intend to prove that the solution $u$ is identically zero.

Following the notations of the previous section, we set
\begin{equation*}
E_k:=\int_{\Omega_k}\nabla u\cdot\nabla u.
\end{equation*}
We can carry out the same estimates as those of paragraph \ref{secstvenant} and get a constant $C_1>0$ such that for all $m\in\mathbb N$, for all $k\geq m$,
\begin{equation}\label{Stvenantw}
E_k\leq C_1\left(E_{k+m+1}-E_k+\frac{k^4}{m^5}\sup_{j\geq k+m}\frac{E_{j+m}-E_j}{j}\right).
\end{equation}
Let $m$ a positive even integer and $\varepsilon>0$ be fixed. Analogously to paragraph \ref{ssec:induction}, the set $\mathcal C_m$ is defined by
\begin{equation*}
\mathcal C_m:=\left\{c,\ \mbox{square of edge of length}\ m\ \mbox{with vertices in}\ \mathbb Z^2\right\}.
\end{equation*}
Note that the situation is not quite the same as in paragraph \ref{ssec:induction} since this set is infinite. The values of $E_c:=\int_{\Omega_c}\left|\nabla u\right|^2$, when $c\in\mathcal C_m$ are bounded by $Cm^2\left\|u\right\|^2_{H^1_{uloc}(\Omb)}$, so the following supremum exists
\begin{equation*}
\mathcal E_m:=\sup_{c\in\mathcal C_m}E_c<\infty,
\end{equation*}
but may not be attained. Therefore for $\varepsilon>0$, we choose a square $c\in\mathcal C_m$ such that $\mathcal E_m-\varepsilon\leq E_c\leq\mathcal E_m$. As in paragraph \ref{ssec:induction}, up to a shift we can always assume that $c$ is centered in $0$. 

From \eqref{Stvenantw}, we retrieve, for all $m,k\in \N$ with $k\geq m$,
\begin{equation*}
E_k\leq\frac{C_1}{C_1+1}E_{k+m+1}+\frac{C_1}{C_1+1}\frac{k^4}{m^5}\sup_{j\geq k+m}\frac{E_{j+m}-E_j}{j}.
\end{equation*}
Again, the conclusion $E_k=0$ would be very easy to get if there were no second term in the right hand side taking into account the large scales due to the non local operator $\DtoN$. 

An induction argument then implies that for all $r\in\mathbb N$,
\begin{equation}\label{Stvenantrecw}
E_k\leq\left(\frac{C_1}{C_1+1}\right)^rE_{k+r(m+1)}+\sum_{r'=0}^{r-1}\left(\frac{C_1}{C_1+1}\right)^{r'+1}\frac{(k+r'(m+1))^4}{m^5}\sup_{j\geq k+m}\frac{E_{j+m}-E_j}{j}.
\end{equation}
Now, for $\kappa:=\logg\left(\frac{C_1}{C_1+1}\right)<0$ and for $k\in \N$ large enough, the function $x\mapsto \exp(\kappa(x+1))(k+x(m+1))^4$ is decreasing on $(-1,\infty)$, so that
\begin{align*}
\sum_{r'=0}^{r-1}\left(\frac{C_1}{C_1+1}\right)^{r'+1}\frac{(k+r'(m+1))^4}{m^5}&\leq
\sum_{r'=0}^\infty\left(\frac{C_1}{C_1+1}\right)^{r'+1}\frac{(k+r'(m+1))^4}{m^5}\\
&\leq\frac{1}{m^5}\int_{-1}^\infty\exp\left(\kappa(x+1)\right)\left(k+x(m+1)\right)^4dx\\
&\leq C\frac{k^5}{m^6}\int_{-\frac{m+1}{k}}^\infty\exp\left(\frac{\kappa k}{m+1}u\right)\left(1+u\right)^4du\\
&\leq C\frac{k^5}{m^6}
\end{align*}
since $k/(m+1)\geq 1/2$ as soon as $k\geq m\geq 1$.
Therefore, we conclude from \eqref{Stvenantrecw} for $k=m$ that for all $r\in \N$,
\begin{align*}
\mathcal E_m-\varepsilon \leq E_m=E_c&\leq \left(\frac{C_1}{C_1+1}\right)^rE_{m+r(m+1)}+\frac{C}{m}\sup_{j\geq 2m}\frac{E_{j+m}-E_j}{j}\\
&\leq \left(\frac{C_1}{C_1+1}\right)^r(r+1)^2(m+1)^2\|u\|_{H^1_{uloc}}^2+4\frac{C}{m}\sup_{j\geq 2m}\frac{j+m}{jm}\mathcal E_m\\
&\leq \left(\frac{C_1}{C_1+1}\right)^r(r+1)^2(m+1)^2\|u\|_{H^1_{uloc}}^2+\frac{C}{m^2}\mathcal E_m.
\end{align*}
Since the constants are uniform in $m$, we have for $m$ sufficiently large and for all $\varepsilon>0$,
\begin{equation*}
\mathcal E_m\leq C\left[\left(\frac{C_1}{C_1+1}\right)^r(r+1)^2(m+1)^2+\varepsilon\right],
\end{equation*}
which letting $r\rightarrow\infty$ and $\varepsilon\to 0$ gives $\mathcal E_m=0$. The latter holds for all $m$ large enough, and thus we have  $u=0$.

%%%%%%%%%%%%%%%%%%%%%%%%%%%%%%%%%%%%%%%%%%%%%%%%%%%%%%%%%%%%%%%%%%%%%%%%%%%%%%%%%%%%%%%%%%%%%%%%%%%%%
%APPENDICES
%%%%%%%%%%%%%%%%%%%%%%%%%%%%%%%%%%%%%%%%%%%%%%%%%%%%%%%%%%%%%%%%%%%%%%%%%%%%%%%%%%%%%%%%%%%%%%%%%%%%%

\section*{Acknowledgements}

The authors wish to thank David G\'erard-Varet for his helpful insights on the derivation of energy estimates.

\appendix

\section{Proof of Lemmas \ref{lem:det-M} and \ref{lem:dev-lambda-A}}
\label{appendixexp}

This section is devoted to the proofs of Lemma \ref{lem:det-M}, which gives a formula for the determinant of $M$, and Lemma \ref{lem:dev-lambda-A}, containing the low and high frequency expansions of the main functions we work with, namely $\lambda_k$ and $A_k$. As $A_1,A_2,A_3$ can be expressed in terms of the eigenvalues $\lambda_k$ solution to \eqref{eq:lambda}, it is essential to begin by stating some properties of the latter. Usual properties on the roots of polynomials entail that the eigenvalues satisfy
\begin{equation}\label{rellambda}
\begin{array}{l}
\mathcal{R}(\lambda_k)>0 \text{ for }k=1,2,3,\quad \lambda_1\in\mathbb ]0,\infty[,\quad \lambda_2=\overline{\lambda_3},\\
-\left(\lambda_1\lambda_2\lambda_3\right)^2=-|\xi|^6,\quad \lambda_1\lambda_2\lambda_3=|\xi|^3,\\
\left(|\xi|^2-\lambda_1^2\right)\left(|\xi|^2-\lambda_2^2\right)\left(|\xi|^2-\lambda_3^2\right)=|\xi|^2,\\
\ds\frac{\left(|\xi|^2-\lambda_k^2\right)^2}{\lambda_k}=\frac{\lambda_k}{|\xi|^2-\lambda_k^2}
\end{array}
\end{equation}
and can be computed exactly
\begin{subequations}\label{lambda_k^2expl}
\begin{align}
\lambda_1^2(\xi)&=|\xi|^2+\left(\frac{-|\xi|^2+\left(|\xi|^4+\frac{4}{27}\right)^\frac{1}{2}}{2}\right)^\frac{1}{3}-\left(\frac{|\xi|^2+\left(|\xi|^4+\frac{4}{27}\right)^\frac{1}{2}}{2}\right)^\frac{1}{3},\\
\lambda_2^2(\xi)&=|\xi|^2+j\left(\frac{-|\xi|^2+\left(|\xi|^4+\frac{4}{27}\right)^\frac{1}{2}}{2}\right)^\frac{1}{3}-j^2\left(\frac{|\xi|^2+\left(|\xi|^4+\frac{4}{27}\right)^\frac{1}{2}}{2}\right)^\frac{1}{3},\\
\lambda_3^2(\xi)&=|\xi|^2+j^2\left(\frac{-|\xi|^2+\left(|\xi|^4+\frac{4}{27}\right)^\frac{1}{2}}{2}\right)^\frac{1}{3}-j\left(\frac{|\xi|^2+\left(|\xi|^4+\frac{4}{27}\right)^\frac{1}{2}}{2}\right)^\frac{1}{3}.
\end{align}
\end{subequations}

\subsection{Expansion of the eigenvalues $\lambda_k$}

The expansions below follow directly from the exact formulas \eqref{lambda_k^2expl}. In high frequencies, that is for $|\xi|\gg 1$, we have
\begin{subequations}\label{dvptlambdakHF}
\begin{eqnarray}
\lambda_1^2=|\xi|^2\left(1-|\xi|^{-\frac{4}{3}}+O\left(|\xi|^{-\frac{8}{3}}\right)\right),&\lambda_1=|\xi|-\frac{1}{2}|\xi|^{-\frac{1}{3}}+O\left(|\xi|^{-\frac{5}{3}}\right),\\
\lambda_2^2=|\xi|^2\left(1-j^2|\xi|^{-\frac{4}{3}}+O\left(|\xi|^{-\frac{8}{3}}\right)\right),&\lambda_2=|\xi|-\frac{j^2}{2}|\xi|^{-\frac{1}{3}}+O\left(|\xi|^{-\frac{5}{3}}\right),\\
\lambda_3^2=|\xi|^2\left(1-j|\xi|^{-\frac{4}{3}}+O\left(|\xi|^{-\frac{8}{3}}\right)\right),&\lambda_3=|\xi|-\frac{j}{2}|\xi|^{-\frac{1}{3}}+O\left(|\xi|^{-\frac{5}{3}}\right).
\end{eqnarray}
\end{subequations}
In low frequencies, that is for $|\xi|\ll 1$, we have
\begin{align*}
\left(|\xi|^4+\frac{4}{27}\right)^\frac{1}{2}&=\frac{2}{\sqrt{27}}\left[1+\frac{27}{8}|\xi|^4+O\left(|\xi|^8\right)\right],\\
\left(\frac{-|\xi|^2+\left(|\xi|^4+\frac{4}{27}\right)^\frac{1}{2}}{2}\right)^\frac{1}{3}&=\frac{1}{\sqrt{3}}-\frac{1}{2}|\xi|^2-\frac{\sqrt{3}}{8}|\xi|^4+O\left(|\xi|^6\right),\\
\left(\frac{|\xi|^2+\left(|\xi|^4+\frac{4}{27}\right)^\frac{1}{2}}{2}\right)^\frac{1}{3}&=\frac{1}{\sqrt{3}}+\frac{1}{2}|\xi|^2-\frac{\sqrt{3}}{8}|\xi|^4+O\left(|\xi|^6\right),
\end{align*}
from which we deduce
\begin{subequations}\label{dvptlambdakBF}
\begin{eqnarray}
\lambda_2^2=i+\frac{3}{2}|\xi|^2-\frac{3}{8}i|\xi|^4+ O(|\xi|^6),&\lambda_2=e^{i\frac{\pi}{4}}\left(1-\frac{3}{4}i |\xi|^2+\frac{3}{32}|\xi|^4 + O(|\xi|^6)\right),\\
\lambda_3^2=-i+\frac{3}{2}|\xi|^2+\frac{3}{8}i|\xi|^4+ O(|\xi|^6),&\lambda_3=e^{-i\frac{\pi}{4}}\left(1+\frac{3}{4}i |\xi|^2 + \frac{3}{32}|\xi|^4 + O(|\xi|^6)\right).
\end{eqnarray}
\end{subequations}
Since $\lambda_1\lambda_2\lambda_3=|\xi|^3$, we infer that
$$
\lambda_1=|\xi|^3 + O(|\xi|^7).
$$

\subsection{Expansion of $A_1$, $A_2$ and $A_3$}

Let us recall that $A_k=A_k(\xi)$, $k=1,\ldots\ 3$, solve the linear system
\begin{equation*}
\underbrace{\left(\begin{array}{ccc}
1&1&1\\
\lambda_1&\lambda_2&\lambda_3\\
\frac{\left(|\xi|^2-\lambda_1^2\right)^2}{\lambda_1}&\frac{\left(|\xi|^2-\lambda_2^2\right)^2}{\lambda_2}&\frac{\left(|\xi|^2-\lambda_3^2\right)^2}{\lambda_3}
\end{array}\right)}_{=:M(\xi)}
\left(\begin{array}{c}
A_1\\A_2\\A_3
\end{array}\right)
=\left(\begin{array}{c}
\widehat{v_{0,3}}\\
i\xi\cdot\widehat{v_{0,h}}\\
-i\xi^\perp\cdot\widehat{v_{0,h}}
\end{array}
\right).
\end{equation*}
The exact computation of $A_k$ is not necessary. For the record, note however that $A_k$ can be written in the form of a quotient
\begin{equation}\label{A_kquot}
A_k=\frac{P\left(\xi_1,\xi_2,\lambda_1,\lambda_2,\lambda_3\right)}{Q\left(|\xi|,\lambda_1,\lambda_2,\lambda_3\right)}
\end{equation}
where $P$ is a polynomial with complex coefficients and 
\begin{equation}\label{detM}
Q:=\det(M)=\left(\lambda_1-\lambda_2\right)\left(\lambda_2-\lambda_3\right)\left(\lambda_3-\lambda_1\right)\left(|\xi|+\lambda_1+\lambda_2+\lambda_3\right).
\end{equation}
This formula for $\det(M)$ is shown using the relations \eqref{rellambda}
\begin{align*}
\det(M)&=\frac{\lambda_2^2\left(|\xi|^2-\lambda_3^2\right)^2-\lambda_3^2\left(|\xi|^2-\lambda_2^2\right)^2}{\lambda_2\lambda_3}-\frac{\lambda_1^2\left(|\xi|^2-\lambda_3^2\right)^2-\lambda_3^2\left(|\xi|^2-\lambda_1^2\right)^2}{\lambda_1\lambda_3}\\
&\qquad\qquad\qquad\qquad\qquad+\frac{\lambda_1^2\left(|\xi|^2-\lambda_2^2\right)^2-\lambda_2^2\left(|\xi|^2-\lambda_1^2\right)^2}{\lambda_1\lambda_2}\\
&=|\xi|\left(\lambda_1\left(\lambda_2^2-\lambda_3^2\right)-\lambda_2\left(\lambda_1^2-\lambda_3^2\right)+\lambda_3\left(\lambda_1^2-\lambda_2^2\right)\right)\\
&\qquad\qquad+\lambda_2\lambda_3\left(\lambda_3^2-\lambda_2^2\right)-\lambda_1\lambda_3\left(\lambda_3^2-\lambda_1^2\right)+\lambda_1\lambda_2\left(\lambda_2^2-\lambda_1^2\right)\\
&=\left(\lambda_1-\lambda_2\right)\left(\lambda_2-\lambda_3\right)\left(\lambda_3-\lambda_1\right)\left(|\xi|+\lambda_1+\lambda_2+\lambda_3\right).
\end{align*}
This proves \eqref{detM}, and thus lemma \ref{lem:det-M}.

We now concentrate on the expansions of $M(\xi)$ for $|\xi|\gg 1$ and $|\xi|\ll 1$.
\subsubsection{High frequency expansion}

At high frequencies, it is convenient to work with the quantities $B_1,B_2,B_3$ introduced in \eqref{def:B}. Indeed, inserting the expansions \eqref{dvptlambdakHF} into the system \eqref{def:A} yields
$$
\begin{aligned}
B_1=\widehat{v_{0,3}},\\
|\xi| B_1 - \frac{1}{2}|\xi|^{-1/3} B_2 + O(|\xi|^{-5/3} |A|)= i\xi\cdot\widehat{v_{0,h}},\\
|\xi|^{1/3} B_3 + O(|\xi|^{-1} |A|)= -i\xi^\perp\cdot\widehat{v_{0,h}}.
\end{aligned}
$$
Of course $A$ and $B$ are of the same order, so that the above system becomes
$$
\begin{aligned}
B_1=\hat v_{0,3},\\
B_2=2|\xi|^{1/3}(|\xi| \widehat{v_{0,3}}-i\xi\cdot\widehat{v_{0,h}}) + O(|\xi|^{-4/3}|B|),\\
B_3=-i|\xi|^{-1/3}\xi^\perp\cdot\widehat{v_{0,h}} +O(|\xi|^{-4/3}|B|).
\end{aligned}
$$
We infer immediately that $|B|=O(|\xi|^{4/3}|\widehat{v_0}|)$, and therefore the result of Lemma \ref{lem:dev-lambda-A} follows.

\subsubsection{Low frequency expansion}
 At low frequencies, we invert $M$ thanks to the adjugate matrix formula
\begin{equation*}
M^{-1}(\xi)=\frac{1}{\det(M(\xi))}\left[\cof(M(\xi))\right]^T.
\end{equation*}

We have
$$\frac{\left(|\xi|^2-\lambda_2^2\right)^2}{\lambda_2}=\frac{e^{i\pi}(1+ O(|\xi|^2))}{e^{i\pi/4}(1+ O(|\xi|^2))}= - e^{-i\pi/4} + O(|\xi|^2)
=\overline{\frac{\left(|\xi|^2-\lambda_3^2\right)^2}{\lambda_3}}.
$$
Hence,
\begin{equation*}
M(\xi)=\left(\begin{array}{ccc}
1&1&1\\
O\left(|\xi|^3\right)&e^{i\frac{\pi}{4}}+O\left(|\xi|^2\right)&e^{-i\frac{\pi}{4}}+O\left(|\xi|^2\right)\\
|\xi|+O\left(|\xi|^5\right)&-e^{-i\frac{\pi}{4}}+O\left(|\xi|^2\right)&-e^{i\frac{\pi}{4}}+O\left(|\xi|^2\right)
\end{array}
\right)
\end{equation*}
and
\begin{equation*}
\cof(M)=\left(
\begin{array}{ccc}
-2i&|\xi|e^{-i\frac{\pi}{4}}&-|\xi|e^{i\frac{\pi}{4}}\\
\sqrt{2}i&-e^{i\frac{\pi}{4}}-|\xi|&e^{-i\frac{\pi}{4}}+|\xi|\\
-\sqrt{2}i&-e^{-i\frac{\pi}{4}}&e^{i\frac{\pi}{4}}
\end{array}
\right)+O\left(|\xi|^2\right).
\end{equation*}
We deduce that 
\begin{align*}
&M^{-1}(\xi)=-\frac{1}{2i\left(1+\frac{\sqrt{2}}{2}|\xi|+O\left(|\xi|^2\right)\right)}\left[\cof(M(\xi))\right]^T\\
&=\left(
\begin{array}{ccc}
1-\frac{\sqrt{2}}{2}|\xi|&-\frac{\sqrt{2}}{2}\left[1-\frac{\sqrt{2}}{2}|\xi|\right]&+\frac{\sqrt{2}}{2}\left[1-\frac{\sqrt{2}}{2}|\xi|\right]\\
\frac{e^{i\frac{\pi}{4}}}{2}|\xi|&-\frac{1}{2i}\left[-e^{i\frac{\pi}{4}}-\left(1-\frac{\sqrt{2}}{2}e^{i\frac{\pi}{4}}\right)|\xi|\right]&-\frac{e^{i\frac{\pi}{4}}}{2}\left[1-\frac{\sqrt{2}}{2}|\xi|\right]\\
\frac{e^{-i\frac{\pi}{4}}}{2}|\xi|&-\frac{1}{2i}\left[e^{-i\frac{\pi}{4}}+\left(1-\frac{\sqrt{2}}{2}e^{-i\frac{\pi}{4}}\right)|\xi|\right]&-\frac{e^{-i\frac{\pi}{4}}}{2}\left[1-\frac{\sqrt{2}}{2}|\xi|\right]
\end{array}
\right)+O\left(|\xi|^2\right).
\end{align*}
Finally,
\begin{subequations}\label{dvptBFAk}
\begin{align}
A_1&=\left(1-\frac{\sqrt{2}}{2}|\xi|\right)\widehat{v_{0,3}}-\frac{\sqrt{2}}{2}i\left(\xi+\xi^\perp\right)\cdot\widehat{v_{0,h}}+O\left(|\xi|^2\left|\widehat{v_0}\right|\right),\\
A_2&=\frac{e^{i\frac{\pi}{4}}}{2}|\xi|\widehat{v_{0,3}}+\frac{1}{2}e^{i\frac{\pi}{4}}\xi\cdot\widehat{v_{0,h}}-\frac{1}{2}e^{-i\frac{\pi}{4}}\xi^\perp\cdot\widehat{v_{0,h}}+O\left(|\xi|^2\left|\widehat{v_0}\right|\right),\\
A_3&=\frac{e^{-i\frac{\pi}{4}}}{2}|\xi|\widehat{v_{0,3}}-\frac{1}{2}e^{-i\frac{\pi}{4}}\xi\cdot\widehat{v_{0,h}}+\frac{1}{2}e^{i\frac{\pi}{4}}\xi^\perp\cdot\widehat{v_{0,h}}+O\left(|\xi|^2\left|\widehat{v_0}\right|\right).
\end{align}
\end{subequations}

\subsection{Low frequency expansion for $L_1$, $L_2$ and $L_3$}

For the sake of completeness, we sketch the low frequency expansion of $L_1$ in detail. 
We recall that
$$
L_k (\xi)\widehat{v_0}(\xi)=\begin{pmatrix}
\frac{i}{|\xi|^2} (-\lambda_k \xi + \frac{(|\xi|^2-\lambda_k^2)^2}{\lambda_k}\xi^\bot)\\1
\end{pmatrix}A_k(\xi)
$$
Hence, for $|\xi|\ll 1$,
$$
L_1(\xi)=\begin{pmatrix}
\frac{i}{|\xi|}\xi^\bot + O(|\xi|^2)\\
1
\end{pmatrix}
\begin{pmatrix}
-\frac{i\sqrt{2}}{2}(\xi_1-\xi_2) & -\frac{i\sqrt{2}}{2}(\xi_1+\xi_2) & 1- \frac{\sqrt{2}}{2}|\xi|
\end{pmatrix} + O(|\xi|^2)
$$
which yields \eqref{exprL1}. The calculations for $L_2$ and $L_3$ are completely analogous.

\subsection{The Dirichlet to Neumann operator}

Let us recall the expression of the operator $\DtoN$ in Fourier space:
\begin{align}
\widehat{\DtoN(v^0)}&=\sum_{k=1}^3\left(\begin{array}{c}
\frac{i}{|\xi|^2}\left[\left(|\xi|^2-\lambda_k^2\right)^2\xi^\perp-\lambda_k^2\xi\right]\\
\lambda_k+\frac{|\xi|^2-\lambda_k^2}{\lambda_k}
\end{array}\right)A_k\label{DNexpr1}\\
&=\left(\begin{array}{c}
-i\widehat{v^0_3}(\xi)\xi\\
i\xi\cdot\widehat{v^0_h}(\xi)
\end{array}\right)
+\sum_{k=1}^3\left(\begin{array}{c}
\frac{i}{|\xi|^2}\left[\left(|\xi|^2-\lambda_k^2\right)^2\xi^\perp+\left(|\xi|^2-\lambda_k^2\right)\xi\right]\\
\frac{|\xi|^2-\lambda_k^2}{\lambda_k}
\end{array}\right)A_k.\label{DNexpr2}
\end{align}

\subsubsection{High frequency expansion}

Using the exact formula \eqref{DNexpr2} for $\widehat{\DtoN v_0}$ together with the expansions \eqref{dvptlambdakHF} and \eqref{dev:A-infty}, we get for the high frequencies
\begin{align}\label{dvptDNHF}
&\widehat{\DtoN v_0}=\left(\begin{array}{c}
-i\widehat{v^0_3}(\xi)\xi\\
i\xi\cdot\widehat{v^0_h}(\xi)
\end{array}\right)
+ \begin{pmatrix}
\frac{i}{|\xi|^2}\left((|\xi|^{4/3} B_3 + O(|\xi|^{4/3}|\widehat{v_0}|))\xi^\bot + (|\xi|^{2/3} B_2 + O(|\xi|^{2/3}|\widehat{v_0}|))\xi \right)\\
|\xi|^{-1/3} B_2 + O(|\xi|^{-1/3}|\widehat{v_0}|)
\end{pmatrix}\\
&=\left(\begin{array}{c}
|\xi|\widehat{v^0_h}+\frac{\xi\cdot \widehat{v^0_h}}{|\xi|}\xi+i\widehat{v^0_3}\xi\\
2|\xi|\widehat{v^0_3}-i\xi\cdot\widehat{v^0_h}
\end{array}
\right)
+
O\left(|\xi|^\frac{1}{3}|\widehat{v_0}|\right)\nonumber.
\end{align}

\subsubsection{Low frequency expansion}
\label{sssec:LFDN}
For $|\xi|\ll 1$, using \eqref{DNexpr1}, \eqref{dvptlambdakBF} and \eqref{dvptBFAk} leads to
\begin{subequations}\label{DNBF}
\begin{align}
&\widehat{\DtoN_h v_0}\nonumber\\
&=\frac{i}{2|\xi|^2}\sum_\pm \left(- \xi^\bot \mp i \xi + O(|\xi|^3)\right) \left(e^{\pm i \pi/4} |\xi| \widehat{v_{0,3}} \pm e^{\pm i \pi/4} \xi \cdot \widehat{v_{0,h}} \mp e^{\mp i \pi/4} \xi^\bot \cdot \widehat{v_{0,h}}  + O(|\xi|^2 | \widehat{v_{0}}|)\right)\\
&=\frac{\sqrt{2}i}{2}\frac{\xi-\xi^\bot}{|\xi|}\widehat{v_{0,3}} + \frac{\sqrt{2}}{2}(\widehat{v_{0,h}} + \widehat{v_{0,h}}^\bot ) + O(|\xi| |\widehat{v_{0}}|).\label{DNhBF}
\end{align}
For the vertical component of the operator $\DtoN$, we have in low frequencies
\begin{align}
\widehat{\DtoN_3v_0}&=i\xi\cdot\widehat{v_{0,h}}+\left(\frac{1}{|\xi|}+O\left(|\xi|\right)\right)A_1(\xi)-\left(e^{i\frac{\pi}{4}}+O\left(|\xi|^2\right)\right)A_2(\xi)-\left(e^{-i\frac{\pi}{4}}+O\left(|\xi|^2\right)\right)A_3(\xi)\nonumber\\
&=\frac{\widehat{v_{0,3}}}{|\xi|}-\frac{\sqrt{2}}{2}\widehat{v_{0,3}}-\frac{\sqrt{2}i}{2}\frac{\xi\cdot\widehat{v_{0,h}}+\xi^\perp\cdot\widehat{v_{0,h}}}{|\xi|}+O\left(|\xi|\left|\widehat{v_0}\right|\right).\label{DN3BF}
\end{align}
\end{subequations}

\section{Lemmas for the remainder terms}
\label{appendixrestes}

The goal of this section is to prove that the various remainder terms encountered throughout the paper decay like $|x|^{-3}$. To that end, we introduce the algebra
\be\label{def:E}\begin{aligned}
E:=\Bigl\{f\in \mathcal C([0,\infty),\R), \exists \mathcal A\subset \R \text{ finite},\ \exists r_0>0, \ f(r)=\sum_{\alpha\in \mathcal A} r^\alpha  f_\alpha(r) \ \forall r\in [0,r_0),\Bigr.\\
\Bigl.\text{where } \forall \alpha \in \mathcal A,\ f_\alpha:\R\to \R \text{ is analytic in }B(0,r_0)\Bigr\}.
\end{aligned}
\ee
We then have the following result:
\begin{lemma}
Let $\varphi \in \mathcal S'(\R^2)$.
\begin{itemize}
\item Assume that $\Supp \hat \varphi \subset B(0,1)$, and that $\hat \varphi(\xi)= f(|\xi|)$ for $\xi$ in a neighbourhood of zero, with $f\in E$ and $f(r)=O(r^\alpha)$ for some $\alpha>1$. Then $\varphi \in L^\infty_{loc}(\R^2\setminus\{0\})$ and there exists a constant $C$ such that
$$
|\varphi(x)|\leq \frac{C}{|x|^3}\quad \forall x\in \R^2.
$$

\item Assume that $\Supp \hat \varphi \subset \R^2\setminus B(0,1)$, and that $\hat \varphi(\xi)= f(|\xi|^{-1})$ for $|\xi|>1$, with $f\in E$ and $f(r)=O(r^\alpha)$ for some $\alpha>-1$. Then $\varphi \in L^\infty_{loc}(\R^2\setminus\{0\})$ and there exists a constant $C$ such that
$$
|\varphi(x)|\leq \frac{C}{|x|^3}\quad \forall x\in \R^2.
$$

\end{itemize}

\label{lem:restes}
\end{lemma}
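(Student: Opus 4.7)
The plan is to use the classical Fourier decay principle: in dimension two, if $\hat\varphi\in\mathcal{S}'(\R^2)$ admits three weak derivatives that lie in $L^1(\R^2)$, then for any multi-index $\beta$ with $|\beta|=3$ one has
$$x^\beta\varphi(x)=C\,\mathcal{F}^{-1}(\partial_\xi^\beta\hat\varphi)(x),$$
so that $|x|^3|\varphi(x)|\leq C\sum_{|\beta|=3}\|\partial_\xi^\beta\hat\varphi\|_{L^1(\R^2)}$. Combined with the trivial bound $|\varphi|\leq\|\hat\varphi\|_{L^1}$ on $|x|\leq 1$, this gives the desired estimate on all of $\R^2\setminus\{0\}$. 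The whole proof therefore reduces to showing $\nabla_\xi^3\hat\varphi\in L^1(\R^2)$ in each of the two situations.

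For (i), split $\hat\varphi=\hat\varphi_0+\hat\varphi_1$, where $\hat\varphi_0$ is the restriction of $\hat\varphi$ to a small ball $B(0,r_0)$ on which the representation $\hat\varphi(\xi)=f(|\xi|)$ is valid, and $\hat\varphi_1$ is the remainder, which is supported in the annulus $\{r_0\leq|\xi|\leq 1\}$. The piece $\hat\varphi_1$ is smooth and compactly supported away from $0$, so its third derivatives are in $L^\infty\cap L^1$. For $\hat\varphi_0$, use the decomposition $f(r)=\sum_{\alpha\in\mathcal{A}}r^\alpha f_\alpha(r)$. Each term $|\xi|^\alpha f_\alpha(|\xi|)\chi_0(\xi)$, differentiated three times in $\xi$, is a finite linear combination of expressions of the form $|\xi|^{\alpha-k}g(|\xi|)$ with $0\leq k\leq 3$ and $g$ bounded. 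The key point is the hypothesis $f(r)=O(r^\alpha)$ with $\alpha>1$: after rewriting the sum in a canonical form (any term $|\xi|^\beta g(|\xi|)$ with $\beta$ integer and $g$ analytic is just a piece of an analytic function; any term with $g(0)=0$ can be absorbed into a higher-order term by factoring $r$ out of $g$), every surviving term satisfies $\alpha>1$. The worst exponent $|\xi|^{\alpha-3}$ is then integrable near the origin in $\R^2$, since
$$\int_{|\xi|\leq r_0}|\xi|^{\alpha-3}d\xi=2\pi\int_0^{r_0}r^{\alpha-2}\,dr<\infty$$
as soon as $\alpha>1$. Thus $\nabla^3_\xi\hat\varphi\in L^1(\R^2)$ and the decay follows.

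For (ii), apply the analogous argument at infinity. Write $\hat\varphi(\xi)=f(|\xi|^{-1})=\sum_{\alpha\in\mathcal{A}}|\xi|^{-\alpha}f_\alpha(|\xi|^{-1})$ for $|\xi|>1$, and handle the region near $|\xi|=1$ as above (smoothness of the cutoff poses no problem). By the chain rule, three $\xi$-derivatives of $|\xi|^{-\alpha}f_\alpha(|\xi|^{-1})$ produce a linear combination of terms bounded by $|\xi|^{-\alpha-3}$ (times a bounded function of $|\xi|^{-1}$). After the same canonical-form reduction, the hypothesis $f(r)=O(r^\alpha)$ with $\alpha>-1$ ensures that each term in the sum has $\alpha>-1$, hence
$$\int_{|\xi|\geq 1}|\xi|^{-\alpha-3}d\xi=2\pi\int_1^\infty r^{-\alpha-2}\,dr<\infty,$$
so that $\nabla_\xi^3\hat\varphi\in L^1(\R^2)$ and the decay $|\varphi(x)|\leq C|x|^{-3}$ follows.

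The only non-routine step is the canonical-form reduction in the decomposition of $f\in E$: since the class $E$ allows arbitrary real exponents with analytic prefactors, one must argue that without loss of generality every term in the representation contributes at exactly its nominal order — otherwise the mere presence of a term $r^{\alpha_0}f_{\alpha_0}$ with $\alpha_0\leq 1$ in the sum would not, by itself, violate the hypothesis $f=O(r^\alpha)$ for some $\alpha>1$, but would spoil the termwise integrability argument. This is resolved by successively factoring $r$ out of any $f_\alpha$ with $f_\alpha(0)=0$ and reindexing, using that $\mathcal{A}$ is finite and the $f_\alpha$ are analytic; after finitely many such steps all surviving exponents satisfy the required lower bound. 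Once this bookkeeping is done, the conclusion is immediate from the area element $r\,dr\,d\theta$ in $\R^2$.
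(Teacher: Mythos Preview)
Your proposal is correct and follows essentially the same route as the paper: reduce to $\nabla_\xi^3\hat\varphi\in L^1(\R^2)$ via the identity $x^\beta\varphi=C\,\mathcal F^{-1}(\partial_\xi^\beta\hat\varphi)$, then use the structure of the algebra $E$ to control the third derivatives by $|\xi|^{\alpha-3}$ (resp.\ $|\xi|^{-\alpha-3}$) and check integrability in $\R^2$. The paper organizes the same argument into three explicit sublemmas---stability of $E$ under differentiation together with the fact that $f(r)=O(r^{\alpha_0})$ forces the minimal exponent in the canonical representation to be $\geq\alpha_0$ (your ``canonical-form reduction''), exact chain-rule formulas for $\partial_{\xi_i}^3 f(|\xi|)$ and $\partial_{\xi_i}^3 f(|\xi|^{-1})$, and the Fourier decay principle---but the content is identical.
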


We prove the Lemma in several steps: we first give some properties of the algebra $E$. We then compute the derivatives of order 3 of functions of the type $f(|\xi|)$ and $f(|\xi|^{-1})$. Eventually, we explain the link between the bounds in Fourier space and in the physical space.

\subsection*{Properties of the algebra E}
\begin{lemma}
\begin{itemize}
\item $E$ is stable by differentiation.
\item Let $f\in E$ with $f(r)=\sum_{\alpha \in \mathcal A} r^\alpha f_\alpha(r)$, and let $\alpha_0\in \R$. Assume that 
$$
f(r)=O(r^{\alpha_0})
$$
for $r$ in a neighbourhood of zero. Then
$$
\inf\{\alpha \in \mathcal A, f_\alpha(0)\neq 0\}\geq \alpha_0.
$$

\item Let $f\in E$, and let $\alpha_0\in \R$ such that 
$$
f(r)=O(r^{\alpha_0})
$$
for $r$ in a neighbourhood of zero. Then
$$
f'(r)=O(r^{\alpha_0-1})
$$
for $0<r\ll 1$.

\end{itemize}
\end{lemma}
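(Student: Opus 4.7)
The three assertions are interconnected: (i) is a direct computation, (iii) is a short consequence of (i) and (ii), and (ii) is the substantive point. For (i), I would differentiate term by term on $(0,r_0)$:
\[
\bigl(r^\alpha f_\alpha(r)\bigr)' \;=\; \alpha\, r^{\alpha-1} f_\alpha(r) + r^\alpha f_\alpha'(r),
\]
where $\alpha f_\alpha$ and $f_\alpha'$ are still analytic in $B(0,r_0)$ since $f_\alpha$ is. Reading these as the new ``coefficients'' at indices $\alpha-1$ and $\alpha$ exhibits $f'$ as an element of $E$ with index set $\mathcal A \cup (\mathcal A - 1)$, still finite, after grouping the contributions at coinciding exponents into a single analytic coefficient when needed.

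The core technical step is (ii), and the subtlety is that the representation $f(r) = \sum_\alpha r^\alpha f_\alpha(r)$ is not canonical: if some $f_\alpha$ vanishes at $0$ to order $k \geq 1$, one may factor $f_\alpha(r) = r^k \tilde f_\alpha(r)$ with $\tilde f_\alpha$ analytic and $\tilde f_\alpha(0) \neq 0$, then rewrite $r^\alpha f_\alpha(r) = r^{\alpha+k}\tilde f_\alpha(r)$, moving mass from index $\alpha$ to index $\alpha+k$. I would therefore begin with a \emph{normalization step}: iterate this reduction over all $\alpha \in \mathcal A$ (merging coefficients when two indices end up equal) and discard those indices for which $f_\alpha \equiv 0$, producing a new finite index set $\mathcal A'$ and analytic coefficients $g_\beta$ with $g_\beta(0) \neq 0$ for every $\beta \in \mathcal A'$. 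The process terminates because each nonzero analytic function has finite order of vanishing at the origin. Since $\{\alpha \in \mathcal A : f_\alpha(0) \neq 0\} \subset \mathcal A'$, it suffices to prove $\min \mathcal A' \geq \alpha_0$. Writing $\mathcal A' = \{\beta_1 < \cdots < \beta_N\}$, I compute
\[
r^{-\beta_1} f(r) \;=\; g_{\beta_1}(r) + \sum_{i=2}^N r^{\beta_i - \beta_1} g_{\beta_i}(r) \;\longrightarrow\; g_{\beta_1}(0) \neq 0 \quad \text{as } r \to 0^+.
\]
If $f(r) = O(r^{\alpha_0})$ then $r^{-\beta_1} f(r) = O(r^{\alpha_0 - \beta_1})$; were $\beta_1 < \alpha_0$, the right-hand side would tend to $0$, contradicting the nonzero limit. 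Hence $\beta_1 \geq \alpha_0$. The main obstacle is organizing the normalization step carefully, tracking all mergers of exponents, and making sure the enlarged index set remains finite after iteration.

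Part (iii) then follows directly from (i) and (ii). After applying (ii) to $f$ in its normalized form, every $\beta \in \mathcal A'$ satisfies $\beta \geq \alpha_0$. Then by (i),
\[
f'(r) \;=\; \sum_{\beta \in \mathcal A'}\bigl(\beta\, r^{\beta-1} g_\beta(r) + r^\beta g_\beta'(r)\bigr)
\]
is a finite sum on $(0,r_0)$, and each term is bounded by $C r^{\beta-1} \leq C r^{\alpha_0-1}$ on a neighbourhood of $0$, since $g_\beta$ and $g_\beta'$ are continuous at $0$. Summing the finitely many terms yields $f'(r) = O(r^{\alpha_0-1})$ for $0 < r \ll 1$, as required.
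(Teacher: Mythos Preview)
Your proposal is correct and follows essentially the same approach as the paper: both arguments normalize the representation so that every remaining coefficient is nonzero at the origin, then read off the leading behaviour $f(r)\sim r^{\beta_1}g_{\beta_1}(0)$ to conclude $\beta_1\ge\alpha_0$, and derive (iii) by differentiating the normalized sum term by term. You spell out the iterative factoring-and-merging normalization in more detail than the paper (which simply asserts ``we can always choose the set $\mathcal A$ and the functions $f_\alpha$ so that \dots $f_{\alpha_i}(0)\neq 0$''), but the strategy is identical.
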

\begin{proof}
The first point simply follows from the chain rule and the fact that if $f_\alpha$ is analytic in $B(0,r_0)$, then so is $f_\alpha'$. Concerning the second point, notice that we can always choose the set $\mathcal A$  and the functions $f_\alpha$ so that
$$
f(r)=r^{\alpha_1} f_{\alpha_1}(r) + \cdots + r^{\alpha_s} f_{\alpha_s}(r),
$$
where $\alpha_1<\cdots < \alpha_s$ and $f_{\alpha_i}$ is analytic in $B(0,r_0)$ with $f_{\alpha_i}(0)\neq 0$. Therefore
$$
f(r)\sim r^{\alpha_1} f_{\alpha_1}(0)\text{ as }r\to 0, 
$$
so that $r^{\alpha_1} =O(r^{\alpha_0})$. It follows that $\alpha_1\geq \alpha_0$. Using the same expansion, we also obtain
$$
f'(r)=\sum_{i=1}^s\alpha_i r^{\alpha_i-1} f_{\alpha_i}(r) + r^{\alpha_i} f_{\alpha_i}'(r)=O(r^{\alpha_1-1}).
$$
Since $r^{\alpha_1} =O(r^{\alpha_0})$, we infer eventually that $
f'(r)=O(r^{\alpha_0-1})
$.
\end{proof}

\subsection*{Differentiation formulas}

Now, since we wish to apply the preceding Lemma to functions of the type $f(|\xi|)$, or $f(|\xi|^{-1})$, where $f\in E$, we need to have differentiation formulas for such functions. Tedious but easy computations yield, for $\varphi\in \mathcal C^3(\R)$,
\begin{eqnarray*}
\pa_{\xi_i}^3f(|\xi|)&=&\left(3 \frac{\xi_i^3}{|\xi|^5} - 3 \frac{\xi_i}{|\xi|^3}\right) f'(|\xi|)\\
&+& \left(3 \frac{\xi_i}{|\xi|^2} - \frac{\xi_i^3}{|\xi|^4}\right) f''(|\xi|)\\
&+& \frac{\xi_i^3}{|\xi|^3}f^{(3)}(|\xi|)
\end{eqnarray*}
and
\begin{eqnarray*}
\pa_{\xi_i}^3f (|\xi|^{-1})&=&\left(9 \frac{\xi_i}{|\xi|^5} - 11 \frac{\xi_i^3}{|\xi|^7}\right) f'(|\xi|^{-1})\\
&+& \left(3 \frac{\xi_i}{|\xi|^6} - 7 \frac{\xi_i^3}{|\xi|^8}\right) f''(|\xi|^{-1})\\
&+& \frac{\xi_i^3}{|\xi|^9}f^{(3)}(|\xi|^{-1})
\end{eqnarray*}
In particular, if $\varphi:\R^2\to \R$ is such that $\varphi(\xi)= f(|\xi|)$ for $\xi$ in a neighbourhood of zero, where $f\in E$ is such that $f(r)=O(r^\alpha)$ for $r$ close to zero, we infer that
$$
|\pa_{\xi_1}^3 \varphi(\xi)| + |\pa_{\xi_2}^3 \varphi(\xi)| = O(|\xi|^{\alpha-3})
$$
for $|\xi|\ll 1$. In a similar fashion, if  $\varphi(\xi)= f(|\xi|^{-1})$ for $\xi$ in a neighbourhood of zero, where $f\in E$ is such that $f(r)=O(r^\alpha)$ for $r$ close to zero, we infer that
$$
|\pa_{\xi_1}^3 \varphi(\xi)| + |\pa_{\xi_2}^3 \varphi(\xi)| = O\left\{|\xi|^{-4}(|\xi|^{-1})^{-\alpha-1} +|\xi|^{-5}(|\xi|^{-1})^{-\alpha-2}+|\xi|^{-6}(|\xi|^{-1})^{-\alpha-3}\right\}=O(|\xi|^{\alpha-3}).
$$

\subsection*{Moments of order 3 in the physical space}

\begin{lemma}Let $\varphi \in \mathcal S'(\R^2)$ such that
$\pa_{\xi_1}^3\varphi,\pa_{\xi_2}^3\varphi\in L^1\left(\mathbb R^2\right)$.\\
Then
\begin{equation*}
\left|\mathcal F^{-1}\left(\varphi\right)(x_h)\right|\leq \frac{C}{|x_h|^3}\quad \text{in }\mathcal D'(\R^2\setminus\{0\}).
\end{equation*}\label{lemappendixB1}
\end{lemma}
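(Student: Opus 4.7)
The plan is to exploit the standard duality between decay in the physical variable and regularity in Fourier: for $i=1,2$, the tempered distribution identity
\[
(ix_{h,i})^3 \, \mathcal F^{-1}(\varphi)(x_h) \;=\; \mathcal F^{-1}\!\left(\partial_{\xi_i}^3\varphi\right)(x_h)
\]
holds in $\mathcal S'(\R^2)$. Since, by hypothesis, $\partial_{\xi_i}^3\varphi \in L^1(\R^2)$, its inverse Fourier transform is a bounded continuous function, with
\[
\left\|\mathcal F^{-1}\!\left(\partial_{\xi_i}^3\varphi\right)\right\|_{L^\infty(\R^2)} \;\leq\; \frac{1}{(2\pi)^2}\,\|\partial_{\xi_i}^3\varphi\|_{L^1(\R^2)}=:C_i.
\]

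I would then argue on the open set $\R^2\setminus\{0\}$. On the open half-plane $\{x_{h,1}\neq 0\}$, dividing the first identity (for $i=1$) by $(ix_{h,1})^3$ shows that $\mathcal F^{-1}(\varphi)$ coincides, in $\mathcal D'(\{x_{h,1}\neq 0\})$, with the bounded function $(ix_{h,1})^{-3}\mathcal F^{-1}(\partial_{\xi_1}^3\varphi)$, giving
\[
\left|\mathcal F^{-1}(\varphi)(x_h)\right|\leq \frac{C_1}{|x_{h,1}|^3}\quad \text{on }\{x_{h,1}\neq 0\}.
\]
The analogous argument with $i=2$ gives $|\mathcal F^{-1}(\varphi)(x_h)|\leq C_2/|x_{h,2}|^3$ on $\{x_{h,2}\neq 0\}$. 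These two representations agree on the overlap, so they patch together to show that $\mathcal F^{-1}(\varphi)$ is a locally bounded function on $\R^2\setminus\{0\}$.

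To conclude, for any $x_h\in\R^2\setminus\{0\}$ one has $\max(|x_{h,1}|,|x_{h,2}|)\geq |x_h|/\sqrt{2}$. Applying whichever of the two bounds above corresponds to the larger coordinate yields
\[
\left|\mathcal F^{-1}(\varphi)(x_h)\right|\leq \frac{2^{3/2}\max(C_1,C_2)}{|x_h|^3},
\]
which is the desired estimate. The only subtle point is the distributional interpretation: one must justify that the identity $(ix_{h,i})^3 \mathcal F^{-1}(\varphi)=\mathcal F^{-1}(\partial_{\xi_i}^3\varphi)$, combined with the fact that the right-hand side is a continuous function, allows one to divide by $x_{h,i}^3$ on the open set where $x_{h,i}\neq 0$ — but this is standard, since on that open set the multiplication operator by $x_{h,i}^3$ is an isomorphism of $\mathcal D'$.
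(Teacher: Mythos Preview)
Your proof is correct and follows essentially the same approach as the paper: the paper's proof simply cites the identity $x_h^\alpha \mathcal F^{-1}(\varphi)=i\,\mathcal F^{-1}(\nabla_\xi^\alpha\varphi)$ for $|\alpha|=3$ and leaves the rest implicit, while you spell out the two relevant cases $\alpha=(3,0)$ and $\alpha=(0,3)$, the $L^1\to L^\infty$ boundedness of $\mathcal F^{-1}$, the division on each half-plane, and the elementary inequality $\max(|x_{h,1}|,|x_{h,2}|)\geq |x_h|/\sqrt 2$ to recombine. Your version is in fact more careful than the paper's, which nominally invokes all multi-indices of length $3$ even though only the two pure ones are assumed to lie in $L^1$.
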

\begin{proof}
The proof follows from the formula
$$
x_h^\alpha \mathcal F^{-1}\left(\varphi\right)=i \mathcal F^{-1}(\nabla_\xi^\alpha\varphi)
$$
for all $\alpha\in\mathbb N^2$ such that $|\alpha|=3$. When $\varphi\in \mathcal S(\R^2)$, the formula is a consequence of standard properties of the Fourier transform. It is then extended to $\varphi\in \mathcal S'(\R^2)$ by duality.
\end{proof}

\begin{remark}
Notice that constants or polynomials of order less that two satisfy the assumptions of the above Lemma. In this case, the inverse Fourier transform is a distribution whose support is $\{0\}$ (Dirac mass or derivative of a Dirac mass). This is of course compatible with the result of Lemma \ref{lemappendixB1}.
\end{remark}

The result of Lemma \ref{lem:restes} then follows easily. There only remains to explain how we can apply it to the functions in the present paper.
To that end, we first notice that for all $k\in \{1,2,3\}$, $\lambda_k$ is a function of $|\xi|$ only, say $\lambda_k=f_k(|\xi|)$. In a similar fashion, 
$$
L_k(\xi)= G_k^0(|\xi|) + \xi_1 G_k^1(|\xi|) + \xi_2 G_k^2(|\xi|).
$$
We then claim  the following result:

\begin{lemma}\label{lem:restes-bis}
\begin{itemize}
\item For all $k\in \{1,2,3\}$, $j\in \{0,1,2\}$, the functions $f_k, G_k^j$, as well as
\be\label{f_kinE}
r\mapsto f_k(r^{-1}),\  r\mapsto G_k^j(r^{-1})
\ee
all belong to $E$.

\item For $\xi$ in a neighbourhood of zero,
$$
\begin{aligned}
M_k^{rem}= P_k(\xi) + \sum_{1\leq i,j,\leq 2} \xi_i \xi_j a_k^{ij}(|\xi|) + \xi \cdot b_k(|\xi|),\\
N_k^{rem}= Q_k(\xi) + \sum_{1\leq i,j,\leq 2} \xi_i \xi_j c_k^{ij}(|\xi|) + \xi \cdot d_k(|\xi|),
\end{aligned}
$$
where $P_k, Q_k$ are polynomials, and $a_k^{ij}, c_k^{i,j}\in E$, $b_k,\ d_k\in E^2$ with $b_k(r),\ d_k(r)=O(r)$ for $r$ close to zero.

\item There exists a function $m\in E$ such that
$$
(M_{SC}-M_S)(\xi)=m(|\xi|^{-1})
$$
for $|\xi|\gg 1$.
\end{itemize}
\end{lemma}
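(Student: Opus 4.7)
The proof amounts to a careful reading of the explicit formulas \eqref{lambda_k^2expl} and of the expansions derived in Appendix \ref{appendixexp}, combined with the algebraic closure properties of $E$ already established. For the low-frequency half of the first point, I observe that since $4/27 > 0$, the map $s \mapsto (s^4 + 4/27)^{1/2}$ is analytic in $s^2$ near $s = 0$, and so are all of its cube roots involved in \eqref{lambda_k^2expl}. For $k = 2, 3$ this gives $\lambda_k^2$ (and hence $\lambda_k$) analytic in $|\xi|^2$, so $f_k \in E$ with $\mathcal A = \{0\}$. For $k=1$ the two cube-root summands in \eqref{lambda_k^2expl} cancel to leading order, yielding $\lambda_1^2 = |\xi|^6 g(|\xi|^2)$ with $g$ analytic and $g(0)=1$, and hence $\lambda_1 = |\xi|^3 h(|\xi|^2) \in E$.

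For the high-frequency half, I desingularize by setting $\mu = \lambda/|\xi|$ and $\tau = |\xi|^{-2/3}$, so that the characteristic equation $(\lambda^2 - |\xi|^2)^3 + \lambda^2 = 0$ becomes $(\mu^2 - 1)^3 + \tau^3 \mu^2 = 0$. For each of the three choices of cube root, the implicit function theorem in $\tau$ applied near $(\mu, \tau) = (1, 0)$ produces an analytic branch $\mu_k = H_k(\tau)$. Writing the Taylor series of $H_k$ modulo the residue classes of its exponents mod $3$,
\begin{equation*}
H_k(r^{2/3}) = \widetilde{A}_k(r^2) + r^{2/3}\widetilde{B}_k(r^2) + r^{4/3}\widetilde{C}_k(r^2),
\end{equation*}
gives $f_k(r^{-1}) = r^{-1} H_k(r^{2/3})$ as the required finite sum of $r^\alpha \times (\text{analytic in }r)$; hence $r \mapsto f_k(r^{-1}) \in E$. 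The claims for $G_k^j$ follow because each $G_k^j$ is obtained algebraically from $\lambda_k$, $|\xi|^2$ and the quantity $(|\xi|^2 - \lambda_k^2)^2 / \lambda_k = \lambda_k / (|\xi|^2 - \lambda_k^2)$ (see \eqref{rellambda}); the denominator does not vanish near the regimes of interest, and $E$ is a differential algebra stable under quotients of this form.

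For the second point, I read off from \eqref{exprL1} and its analogues the structure of the two columns $M_k = (L_k e_1, L_k e_2)$: each entry is a finite sum of (i) polynomial monomials in $(\xi_1, \xi_2)$ of degree $\leq 2$, (ii) terms $\frac{\xi_i \xi_j}{|\xi|} \widetilde g(|\xi|)$ with $\widetilde g \in E$ analytic at $0$, and (iii) higher-order contributions of the form $\xi_\alpha \cdot (\text{function of }|\xi|)$ coming from the Taylor remainders of the $f_k, G_k^j$. Subtracting the nonpolynomial homogeneous order-one part $M_k^1$ removes exactly the leading piece of type (ii); the remaining quantity fits the decomposition $P_k(\xi) + \sum_{i,j} \xi_i \xi_j a_k^{ij}(|\xi|) + \xi \cdot b_k(|\xi|)$ with $a_k^{ij}, b_k \in E$, and the $O(r)$ vanishing of $b_k$ at $r = 0$ comes from the fact that any nonzero constant term in $b_k$ would produce an extra degree-one polynomial that is simply absorbed into $P_k$. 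The case of $N_k = i L_k e_3 \,{}^t\xi$ is identical.

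The third point is a direct computation: from $M_{SC} = \sum_{k=1}^3 (\lambda_k L_k + e_3 {}^t q_k)$, the refined high-frequency expansions of $\lambda_k, L_k, q_k$ produced in the first point give each entry of $M_{SC} - M_S$ as a finite sum of monomials in $\xi$ multiplied by scalar functions of $|\xi|$ which, under the substitution $r = |\xi|^{-1}$, belong to $E$ with leading exponent $1/3$, in accordance with Lemma \ref{lem:deriv3DN}. The statement is then to be read entry-wise, with $m$ an $E$-valued matrix symbol. The main technical obstacle is the rigorous passage to the Puiseux-type expansions at infinity, which requires the desingularization $(\mu, \tau)$ described above and a careful bookkeeping of the three branches of $\lambda_k$; once this is done, all remaining work is bookkeeping of the algebraic closure of $E$ under the operations appearing in the formulas for $L_k, q_k$ and $M_{SC}$.
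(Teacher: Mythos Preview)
Your approach is sound and arguably more systematic than the paper's, which simply says ``the lemma can be easily proved using the formulas \eqref{lambda_k^2expl} together with the Maclaurin series for functions of the type $x\mapsto (1+x)^s$''. In other words, the paper expands the explicit Cardano formulas \eqref{lambda_k^2expl} directly, whereas you use a desingularization and the implicit function theorem to obtain Puiseux-type expansions at infinity. Both reach the same conclusion; your route makes the algebraic structure behind the finite set $\mathcal A$ in the definition of $E$ more transparent, while the paper's route avoids any abstract machinery at the cost of being purely computational.

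There is, however, one arithmetic slip in your desingularization. With $\mu=\lambda/|\xi|$ the characteristic equation $(\lambda^2-|\xi|^2)^3+\lambda^2=0$ becomes $(\mu^2-1)^3+|\xi|^{-4}\mu^2=0$. If you set $\tau=|\xi|^{-2/3}$ this reads $(\mu^2-1)^3+\tau^6\mu^2=0$, not $\tau^3\mu^2$; equivalently, the correct rescaling is $\tau=|\xi|^{-4/3}$, and then your argument goes through verbatim: taking the three cube-root branches gives $\mu^2-1=-\omega_k\tau\,\mu^{2/3}$ with $\omega_k^3=1$, the implicit function theorem applies (the $\mu$-derivative is $2$ at $(1,0)$), and one obtains $\mu_k$ analytic in $|\xi|^{-4/3}$. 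This is consistent with the expansions \eqref{dvptlambdakHF}, where the successive corrections to $\lambda_k/|\xi|$ appear in steps of $|\xi|^{-4/3}$. The ensuing regrouping should therefore read $H_k(r^{4/3})=\widetilde A_k(r^4)+r^{4/3}\widetilde B_k(r^4)+r^{8/3}\widetilde C_k(r^4)$, yielding $\mathcal A=\{-1,\tfrac13,\tfrac53\}$ for $r\mapsto f_k(r^{-1})$. With this correction your argument is complete.
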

The lemma can be easily proved using the formulas \eqref{lambda_k^2expl} together with the Maclaurin series for functions of the type $x\mapsto (1+x)^s$ for $s\in \R$.

\section{Fourier multipliers supported in low frequencies}
\label{appendixDroniouImbert}

This appendix is concerned with the proof of Lemma \ref{lem:noyau-ordre1}, which is a slight variant of a result by Droniou and Imbert \cite{DroniouImbert} on integral formulas for the fractional laplacian. Notice that this corresponds to the operator $\mathcal I[|\xi|]=\mathcal I\left[\frac{\xi_1^2 + \xi_2^2}{|\xi|}\right]$. We recall that $g\in \mathcal S\left(\mathbb R^2\right)$, $\zeta\in\mathcal C^\infty_0\left(\mathbb R^2\right)$ and $\rho:=\mathcal F^{-1}\zeta\in\mathcal S\left(\mathbb R^2\right)$. Then, for all $x\in\mathbb R^2$,
$$
\mathcal F^{-1}\left(\frac{\xi_i\xi_j}{|\xi|}\zeta(\xi)\hat{g}(\xi)\right)(x)=\mathcal F^{-1}\left(\frac{1}{|\xi|}\right)*\mathcal F^{-1}\left(\xi_i\xi_j\zeta(\xi)\hat{g}(\xi)\right)(x).
$$
As explained in \cite{DroniouImbert}, the function $|\xi|^{-1}$ is locally integrable in $\R^2$ and therefore belongs to $\mathcal S'(\R^2)$. Its inverse Fourier transform is a radially symmetric distribution with homogeneity $-2+1=-1$. Hence there exists a constant $C_I$ such that
$$
\mathcal F^{-1}\left(\frac{1}{|\xi|}\right)= \frac{C_I}{|x|}.
$$
We infer that

\begin{align*}
\mathcal F^{-1}\left(\frac{\xi_i\xi_j}{|\xi|}\zeta(\xi)\hat{g}(\xi)\right)(x)&=\frac{C_I}{|\cdot|}*\pa_{ij}(\rho*g)\\
&=C_I\int_{\mathbb R^2}\frac{1}{|x-y|}\pa_{ij}(\rho*g)(y)dy\\
&=C_I\int_{\mathbb R^2}\frac{1}{|y|}\pa_{ij}(\rho*g)(x+y)dy.
\end{align*}
The idea is to put the derivatives $\pa_{ij}$ on the kernel $\frac{1}{|y|}$ through integrations by parts.
As such it is not possible to realize this idea. Indeed, $y\mapsto\partial_{i}\left(\frac{1}{|y|}\right)\partial_{j}(\rho*g)(x+y)$ is not integrable in the vicinity of $0$. In order to compensate for this lack of integrability, we consider an even function $\theta\in\mathcal  C^\infty_0\left(\mathbb R^2\right)$ such that $0\leq\theta\leq 1$ and $\theta=1$ on $B(0,K)$, and we introduce the auxiliary function
\begin{equation*}
U_x(y):=\rho*g(x+y)-\rho*g(x)-\theta(y)\left(y\cdot\nabla\right)\rho*g(x)
\end{equation*}
which satisfies
\begin{equation}\label{proprU_x(y)}
|U_x(y)|\leq C|y|^2,\qquad |\nabla_yU_x(y)|\leq C|y|,
\end{equation}
for $y$ close to $0$. Then, for all $y\in\mathbb R^2$,
\begin{align*}
\partial_{y_i}\partial_{y_j}U_x&=\partial_{y_i}\partial_{y_j}\rho*g(x+y)-\left(\partial_{y_i}\partial_{y_j}\theta\right)(y\cdot\nabla)\rho*g(x)-\left(\partial_{y_j}\theta\right)\partial_{x_i}\rho*g(x)-\left(\partial_{y_i}\theta\right)\partial_{x_j}\rho*g(x)
\end{align*}
where 
\begin{equation*}
y\mapsto -\left(\partial_{y_i}\partial_{y_j}\theta\right)(y\cdot\nabla)\rho*g(x)-\left(\partial_{y_j}\theta\right)\partial_{x_i}\rho*g(x)-\left(\partial_{y_i}\theta\right)\partial_{x_j}\rho*g(x)
\end{equation*}
is an odd function. Therefore, for all $\varepsilon>0$, 
$$
\int_{\varepsilon<|y|<\varepsilon^{-1}}\frac{1}{|y|}\pa_{ij}(\rho*g)(x+y)dy=\int_{\varepsilon\leq|y|\leq\frac{1}{\varepsilon}}\frac{1}{|y|}\partial_{y_i}\partial_{y_j}U_x(y)dy.
$$
A first integration by parts yields
\begin{eqnarray*}
&&\int_{\varepsilon\leq|y|\leq\frac{1}{\varepsilon}}\frac{1}{|y|}\partial_{y_i}\partial_{y_j}\rho*g(x+y)dy\\
&=&\int_{\varepsilon\leq|y|\leq\frac{1}{\varepsilon}}\frac{1}{|y|}\partial_{y_i}\partial_{y_j}U_x(y)dy\\
&=&\int_{|y|=\varepsilon}\frac{1}{|y|}\partial_{y_j}U_x(y)n_i(y)dy+\int_{|y|=\frac{1}{\varepsilon}}\frac{1}{|y|}\partial_{y_j}U_x(y)n_i(y)dy+\int_{\varepsilon\leq|y|\leq\frac{1}{\varepsilon}}\frac{y_i}{|y|^3}\partial_{y_j}U_x(y)dy.
\end{eqnarray*} 
The first boundary integral vanishes as $\varepsilon\to 0$ because of \eqref{proprU_x(y)}, and the second thanks to the fast decay of $\rho*g\in\mathcal S\left(\mathbb R^2\right)$. Another integration by parts leads to
\begin{eqnarray*}
&&\int_{\varepsilon\leq|y|\leq\frac{1}{\varepsilon}}\frac{y_i}{|y|^3}\partial_{y_j}U_x(y)dy\\
&=&\int_{|y|=\varepsilon}\frac{y_i}{|y|^3}U_x(y) n_j(y)dy+\int_{|y|=\frac{1}{\varepsilon}}\frac{y_i}{|y|^3}U_x(y) n_j(y)dy+\int_{\varepsilon\leq|y|\leq\frac{1}{\varepsilon}}\left(\partial_{y_i}\partial_{y_j}\frac{1}{|y|}\right)U_x(y)dy\\
&&\stackrel{\varepsilon\rightarrow 0}{\longrightarrow}\int_{\mathbb R^2}\left(\partial_{y_i}\partial_{y_j}\frac{1}{|y|}\right)U_x(y)dy,
\end{eqnarray*}
where 
\begin{equation*}
\partial_{y_i}\partial_{y_j}\frac{1}{|y|}=-\frac{\delta_{ij}}{|y|^3}+3\frac{y_iy_j}{|y|^5},\qquad\left|\partial_{y_i}\partial_{y_j}\frac{1}{|y|}\right|\leq\frac{C}{|y|^3},
\end{equation*}
and the boundary terms vanish because of \eqref{proprU_x(y)} and the fast decay of $U_x$. Therefore, for all $x\in\mathbb R^2$,
\begin{eqnarray*}
&&\mathcal F^{-1}\left(\frac{\xi_i\xi_j}{|\xi|}\zeta(\xi)\hat{g}(\xi)\right)(x)=C_I\int_{\mathbb R^2}\left(\partial_{y_i}\partial_{y_j}\frac{1}{|y|}\right)U_x(y)dy\nonumber\\
&=&C_I\int_{\mathbb R^2}\left(\partial_{y_i}\partial_{y_j}\frac{1}{|y|}\right)\left[\rho*g(x+y)-\rho*g(x)-\theta(y)\left(y\cdot\nabla\right)\rho*g(x)\right]dy\nonumber\\
&=&C_I\int_{B(0,K)}\left(\partial_{y_i}\partial_{y_j}\frac{1}{|y|}\right)\left[\rho*g(x+y)-\rho*g(x)-y\cdot\nabla \rho*g(x)\right]dy\nonumber\\
&+&C_I\int_{\mathbb R^2\setminus B(0,K)}\left(\partial_{y_i}\partial_{y_j}\frac{1}{|y|}\right)\left[\rho*g(x+y)-\rho*g(x)\right]dy\nonumber\\
&-&C_I\int_{\mathbb R^2\setminus B(0,K)}\left(\partial_{y_i}\partial_{y_j}\frac{1}{|y|}\right)\theta(y)\left(y\cdot\nabla\right)\rho*g(x)dy.
\end{eqnarray*}
The last integral is zero as $y\mapsto\theta(y)\left(\partial_{y_i}\partial_{y_j}\frac{1}{|y|}\right)y$ is odd. We then perform a last change of variables by setting $y'=x+y$, and we obtain 
\begin{eqnarray*}
&&\mathcal F^{-1}\left(\frac{\xi_i\xi_j}{|\xi|}\zeta(\xi)\hat{g}(\xi)\right)(x)\\&=&-\int_{|x-y'|\leq K}\gamma_{ij}(x-y')\left\{\rho*g(y')-\rho*g(x)-(y'-x)\na \rho*g(x)\right\}dy'\\
&&-\int_{|x-y'|\geq K}\gamma_{ij}(x-y')\left\{\rho*g(y')-\rho*g(x)\right\}dy'.
\end{eqnarray*}
This terminates the proof of Lemma \ref{lem:noyau-ordre1}.

%%%%%%%%%%%%%%%%%%%%%%%%%%%%%%%%%%%%%%%%%%%%%%%%%%%%%%%%%%%%%%%%%%%%%%%%%%%%%%%%%%%%%%%%%%%%%%%%%%%%%
%FIN TEXTE
%%%%%%%%%%%%%%%%%%%%%%%%%%%%%%%%%%%%%%%%%%%%%%%%%%%%%%%%%%%%%%%%%%%%%%%%%%%%%%%%%%%%%%%%%%%%%%%%%%%%%

\bibliographystyle{amsplain}
\bibliography{SC-biblio}

\end{document}